\newtheorem{thm}{Theorem}
\newtheorem{rk}{Remark}
\newtheorem{prop}{Proposition}
\newtheorem{clly}{Corollary}
\newtheorem{lemma}{Lemma}
\newtheorem{defi}{Definition}
\newcommand{\R}{{\mathbb{R}}}
\newcommand{\Q}{{\mathbb{Q}}}
\newcommand{\C}{{\mathbb{C}}}
\newcommand{\Z}{{\mathbb{Z}}}
\newcommand{\N}{{\mathbb{N}}}
\newcommand{\Om}{\Omega}
\DeclareMathOperator{\fix}{Fix}
\DeclareMathOperator{\fil}{Fill}
\DeclareMathOperator{\per}{Per}
\begin{document}
\title{Periodic points of covering maps of the annulus}
\author{J.Iglesias, A.Portela, A.Rovella and J.Xavier}

\begin{abstract}
Let $f$ be a  covering map of the open annulus $A= S^1\times
(0,1)$  of degree $d$ , $|d|>1$.  Assume that $f$ preserves an essential (i.e not contained in a disk of $A$) compact
subset $K$. We show that $f$ has at least the same number of periodic points in each period as the map $z^d$ in $S^1.$

\end{abstract}

\maketitle

\section{Introduction.}

Existence of periodic orbits for orientation preserving annulus
homeomorphisms has been extensively studied. One of the motivations is a
celebrated theorem of dynamical systems, the so-called ``last geometric
theorem of Poincar\'e''. Roughly this result says that an area-preserving
homeomorphism of the closed annulus which rotates one boundary component
clockwise and the other counterclockwise possesses at least two fixed
points. This result was conjectured and proved in special cases by
Poincar\'e \cite{poin1}, and was finally proved by Birkhoff \cite{birk}.
This problem has been considered by many authors and was actually the
trigger for a great deal of research (see the paper \cite{DR} for a
historical review on the subject). Since Franks' paper \cite{gpb}, where he
generalized and proved the statement for homeomorphisms of the open annulus,
interest on the problem of existence of periodic orbits for non compact
surface homeomorphisms arose (see, for example, \cite{f2}, \cite{f3}, \cite{fh}, \cite{lc}).

More recently, in \cite{shub} the problem of existence and growth rate of
periodic orbits for degree two surface endomorphism was considered. In this
paper, they deal with a particular case of Problem 3 posed in \cite{shub2}:
let $S$ be the $2$-sphere, and $f:S\to S$ a continuous map of degree $2$; is
the growth rate inequality
\begin{equation*}
\limsup_{n\to \infty}\frac{1}{n}\ln( \#\{\fix(f^n)\}) \geq \ln (2)
\end{equation*}%
\noindent true? The
answer is no, as the map $(r, \theta) \to (2r, 2\theta)$ has only the poles
as periodic points. However, in \cite{shub} it is shown that the growth
inequality holds in a particular case:
if $f$ is $C^1$ and preserves the latitude foliation, then for each $n$, $f^n
$ has at least $2^n$ fixed points.

In this paper, we
study the existence of periodic orbits for covering maps of the open
annulus $f:A\to A$ of degree $d$, $|d|>1$. Note that the growth
inequality holds trivially for the closed annulus $\overline A$ as each connected component of the  boundary of the annulus must
be invariant by $f$ or $f^2$, and we are assuming $|d|>1$.
On the other hand, the covering map $(r, \theta) \to (2r, 2\theta)$ provides a periodic point free example in the open
annulus $\C\backslash\{0\}$. Our result relates both to the theory of annulus homeomorphisms, and to the work in \cite{shub}.

Let us introduce some preliminary definitions.  If $f:A\to A$ is a continuous function, then the homomorphism $f_{*}$ induced by $f$ on the first homology group $H_1(A,\Z)\simeq \Z$, is
$n\mapsto dn$, for some integer $d$. This number $d$ is called the degree of $f$.

We say that an open subset $U\subset A$ is \textit{essential}, if $i_* (H_1
(U, {\mathbb{Z}})) = H_1(A,\Z)=\Z$, where $i_*: H_1 (U, {\mathbb{Z}}) \to
H_1 (A,{\mathbb{Z}})$ is the induced map in homology by the inclusion $i: U
\to A$. We say that a subset $X\subset A$ is \textit{essential} if any
neighbourhood of $X$ in $A$ is essential. We say that a subset is \textit{%
inessential} if it is not essential, or equivalently, if it is contained in
a disk of $A$.  If $x$ is a periodic point for $f$, its {\it period} is the number $\min\{n\geq 1: f^n (x) = x\}$.  We write $\per_n (f)$ for the set of periodic points of period $n$ of a
given map $f$, and $\fix(f)= \per_1 (f)$.

Let $A^*$ be the compactification of the annulus $A$ with two points so that it
is homeomorphic to the two-sphere. Each connected component of $A^*\backslash A
$ is called an \textit{end} of $A$.  Note that if $f$ is a proper mapping, then $f$ extends continuously to $A^*$, and either fixes both ends or interchanges them.

We need one last definition:  what it means for $f$ to be complete;  we postpone this until Section \ref{niel}  because the definition involves some Nielsen theory.  We recall that
$x,y\in \fix(f)$ are {\it Nielsen equivalent} if there exist an arc $\gamma$ joining $x$ and $y$ such that $\gamma$ is homotopic to $f(\gamma)$ with fixed endpoints.  If $f$ is complete then for each $n$, $f^n$ has exactly $|d^n -1|$ Nielsen classes of fixed points, (see Lemma \ref{crcomp} in Section 3).

We prove the following:

\begin{thm}
\label{per} Let $f: A \to A$ be a  covering map of degree $d$, $|d|>1$. Suppose there
exists an  essential continuum $K\subset A$ such that $f(K)\subset K$.
Then $f$ is complete.

\end{thm}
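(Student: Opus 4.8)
The plan is to verify the defining property of completeness from Section \ref{niel}: that for every $n\ge 1$ every Nielsen class of fixed points of $f^n$ is realized. Since $f^n$ is again a covering map of degree $d^n$ with $|d^n|>1$ and $f^n(K)\subseteq f^{n-1}(K)\subseteq\cdots\subseteq K$, the map $f^n$ satisfies the hypotheses of the theorem, so it suffices to treat $n=1$. Fixing the universal covering $\pi:\tilde A\to A$ with deck transformation $T$, the lifts of $f$ are the maps $T^k\tilde f$, $k\in\Z$, with $T^j(T^k\tilde f)T^{-j}=T^{k-j(d-1)}\tilde f$; hence there are exactly $|d-1|$ conjugacy classes of lifts, conjugate lifts have corresponding fixed point sets, and by the Nielsen theory of Section \ref{niel} these classes match the Nielsen classes of $\fix(f)$. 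So the theorem reduces to the assertion: \emph{every lift $g$ of $f$ to $\tilde A$ has a fixed point} (applied to all $f^n$, this gives the $|d^n-1|$ classes and hence completeness).

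Before proving this I would normalize $K$. First replace $K$ by $\bigcap_{n\ge 0}f^n(K)$: this is a nested intersection of continua, each separating the two ends of $A$ in $A^*$, and since "separating two given points" is preserved under nested intersections of continua in $S^2$, the intersection is again an essential continuum; it is now exactly invariant, $f(K)=K$. Next, with $\mathcal U_N,\mathcal U_S$ the components of $A^*\setminus K$ containing the two ends, set $\hat K=\fil(K):=A^*\setminus(\mathcal U_N\cup\mathcal U_S)$. Assuming for the moment that the extension $\hat f:A^*\to A^*$ fixes each end (the end-exchanging case is handled by passing to $f^2$), one checks that $\hat f$ preserves each of $\mathcal U_N,\mathcal U_S$, so $f(\hat K)=\hat K$; now $\hat K\subset A$ is an $f$-invariant essential continuum and $A^*\setminus\hat K$ consists of exactly two disks.

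Now I work upstairs. A lift $g$ is a homeomorphism of $\tilde A\cong\R^2$ with $gTg^{-1}=T^d$, preserving $\tilde K:=\pi^{-1}(\hat K)$, which is closed, $T$-invariant, connected, contained in a horizontal strip $\R\times[a,b]$ (as $\hat K$ is compact in $A$), and which separates $\tilde A$ into exactly two regions, an upper $V^+\supseteq\R\times(b,1)$ and a lower $V^-\supseteq\R\times(0,a)$; each of $V^\pm$ is the unique complementary region meeting $\{t>b\}$, resp.\ $\{t<a\}$, so $g(V^+)=V^+$ and $g(V^-)=V^-$. Suppose for a contradiction that $g$ is fixed point free and orientation preserving. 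Then $g$ is a Brouwer homeomorphism: no non-wandering points (hence no nonempty compact invariant set), and through every point a Brouwer line. Taking a translation arc from a point $p\in\tilde K$ to $g(p)$ and the associated Brouwer line $\Gamma$ through $p$, write $\tilde A\setminus\Gamma=R^+\sqcup R^-$ with $g(\overline{R^+})\subseteq R^+$; standard Brouwer theory gives $\bigcap_{n\ge 0}g^n(R^+)=\emptyset$. The key point is to arrange that $\Gamma$ separates $V^+$ from $V^-$, say $V^+\subseteq R^+$; then $V^+=g^n(V^+)\subseteq g^n(R^+)$ for all $n$, so $V^+\subseteq\bigcap_n g^n(R^+)=\emptyset$, contradicting $V^+\ne\emptyset$. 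The alternative route I would keep in reserve is via prime ends: $\hat f$ restricts to a degree-$|d|$ branched self-cover of the disk $\mathcal U_N\cup\{N\}$ branched only over the fixed centre, hence induces a degree-$d$ covering of the circle of prime ends of $\mathcal U_N$, which has at least $|d-1|$ fixed prime ends lying in the $|d-1|$ distinct Nielsen classes; one then promotes each fixed prime end to an honest fixed point in $\partial\mathcal U_N\subseteq\hat K$.

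The main obstacle is precisely that step: in the Brouwer picture, excluding "finger-shaped" Brouwer lines (both ends escaping to the same horizontal infinity) so that $\Gamma$ truly separates the two sides — this is where essentiality of $K$, i.e.\ that $\tilde K$ blocks every vertical line of the strip, must be exploited, and it is the same difficulty as showing that the impression of a fixed prime end of $\mathcal U_N$ contains a fixed point rather than merely an invariant continuum (note that $\hat f$ is not a homeomorphism of $A^*$, so Cartwright–Littlewood does not apply directly). A secondary, bookkeeping obstacle is the orientation-reversing case $d<0$ and the end-exchanging case: one cannot simply pass to $f^2$ since completeness of $f$ concerns all iterates of $f$ itself, so these need the orientation-reversing variants of the Brouwer and prime-end arguments.
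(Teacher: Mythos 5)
Your reduction of the theorem to the claim that \emph{every lift of every $f^n$ has a fixed point} is correct and is exactly the paper's reduction (Lemma \ref{d-1} together with Corollary \ref{final}). But the proof of that claim is where your argument stops: you yourself flag that you cannot arrange the Brouwer line $\Gamma$ to separate $V^+$ from $V^-$ (ruling out the ``finger-shaped'' lines), and the prime-end alternative is left at the equally unresolved step of promoting a fixed prime end to a fixed point. That is not a technical loose end --- it is the entire content of the theorem, and essentiality of $K$ enters nowhere else in your argument. (In addition, the assertion that ``standard Brouwer theory gives $\bigcap_{n\ge 0}g^n(R^+)=\emptyset$'' for an arbitrary Brouwer line is itself not standard; what is standard is the structure of the translation domain $\bigcup_{n\in\Z}g^n$ of the band between $\Gamma$ and $g(\Gamma)$, which need not be all of the plane.)

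The paper closes this gap with a tool you did not use: the semiconjugacy of Lemma \ref{semi}. For each lift $F$ there is a continuous $H_F:\pi^{-1}(K)\to\R$ with $H_FF=dH_F$, $H_F(x+1,y)=H_F(x,y)+1$ and $|H_F(x,y)-x|$ bounded. Essentiality of $K$ forces $H_F$ to be surjective (Lemma \ref{sur}), so $H_F^{-1}(0)$ is a \emph{nonempty compact $F$-invariant set}; Brouwer's theorem in the form ``a fixed-point-free orientation-preserving plane homeomorphism has every point wandering'' then gives $\fix(F)\neq\emptyset$ directly, with no Brouwer lines or separation argument needed (Lemma \ref{fijo}). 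You should also not dismiss the orientation-reversing cases as bookkeeping: they require a genuinely different fixed point theorem (Kuperberg's theorem for orientation-reversing plane homeomorphisms), applied to the \emph{lift} extended to the plane via Lemma \ref{extend} --- which sidesteps your correct objection that Cartwright--Littlewood does not apply to the non-injective map on $A^*$ --- together with the connectivity of $\pi^{-1}(K)\cup\{\pm\infty\}$ when $d<-1$, and the existence of invariant connectors (again from the semiconjugacy) when $d>1$ and the ends are exchanged. As it stands, your proposal identifies the right target statement but does not prove it.
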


As explained above, the growth rate inequality holds for annulus maps under the standing hypothesis.

Note  that this result is strictly a consequence of degree; an irrational rotation in the open annulus has no periodic points, and has every essential circle as a compact invariant subset. Results in the same line of work have been obtained by Boronski in \cite{bolonqui1} and \cite{bolonqui2}.

The periodic points given by Theorem \ref{per} do not necesarilly belong to $K$ (see Section \ref{ex}, example \ref{e1}). The problem of whether or not the fixed points of a given map with a compact invariant set $K$ belong to $K$ is known in the literature
as  Cartwright-Littlewood theory. In a seminal paper, M. Cartwright and J. Littlewood \cite{cl} proved that if $K$ is a nonseparating continuum of the plane, invariant under an
orientation preserving
homeomorphism $h$, then $h$ has a fixed point {\it in $K$}.  Existence of a fixed point under such hypothesis was already known on account of Brouwer's plane translation theorem
(\cite{brou}), the novelty being
that the fixed point must belong to the set $K$. An easy proof or Cartwright-Littlewood's theorem can be found in  the extraordinary single-page paper of M.Brown \cite{brown}. In \cite{bell},
H. Bell
proved Cartwright-Littlewood's theorem for orientation reversing plane homeomorphisms, and his results were later generalized by K. Kuperberg  in \cite{krys}  for arbitrary plane continua (not necesarilly
nonseparating).
In \ref{e1}, Section \ref{ex}, we construct a degree two covering map $f$ of the annulus with a totally invariant essential continuum $K$  ($f^{-1}(K) = K$), such that $\fix (f)\cap K = \emptyset$.
However, $K$ is not {\it filled}, that is, its complement has bounded connected components . If
$K\subset A$ is any compact set, the set $\fil (K)$ is defined as the union of $K$ with the bounded connected components of its complement. The definition of Nielsen classes of periodic points is
contained in Section \ref{niel}. We prove the following:

\begin{thm}\label{relleno} Let $f: A \to A$ be a  covering map of degree $d$, $|d|>1$. Suppose there
exists an  essential continuum $K\subset A$ such that $f(K)\subset K$.  Then, there exists a representative $x\in \fil(K)$ for each Nielsen class of periodic points of $f$.

\end{thm}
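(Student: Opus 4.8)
The plan is to work in a universal-cover setting. Let $\pi:\widetilde A\to A$ be the universal cover, with $\widetilde A\cong\mathbb R\times(0,1)$ and deck group generated by a translation $T(x,t)=(x+1,t)$. Lift $f$ to $\widetilde f:\widetilde A\to\widetilde A$ with $\widetilde f T=T^d\widetilde f$. A periodic point $x$ of period $n$ with $f^n$-lift conjugacy class determined by an integer $k$ corresponds to a solution of $\widetilde f^n(\widetilde x)=T^k\widetilde x$, and the Nielsen class is read off from $k\bmod (d^n-1)$; by Lemma \ref{crcomp} the $|d^n-1|$ classes are exactly these residues. So it suffices to show: for each $n$ and each $k$ with the appropriate residue, some solution of $\widetilde f^n(\widetilde x)=T^k\widetilde x$ projects into $\fil(K)$. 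I would prove the stronger statement that $\fil(K)$ itself contains a representative of every class by showing $\fil(K)$ is invariant (or at least $f(\fil(K))\subset\fil(K)$) and essential, then applying Theorem \ref{per} to $\fil(K)$ — but the periodic points produced by that route still need not lie in $\fil(K)$, so this shortcut does not immediately work. Instead the real argument must locate the periodic points.

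First I would establish that $\fil(K)$ is a filled essential continuum with $f(\fil(K))\subset\fil(K)$: filling a continuum keeps it a continuum, does not change its essential character (an essential $K$ cannot lie in a disk, and neither can $\fil(K)\supset K$), and since $f$ is a covering map of degree $d$ it maps bounded complementary components of $K$ into bounded complementary components of $f(K)\subset K$ (a covering map cannot collapse a disk's boundary to wind around an end), hence $f(\fil(K))\subset\fil(\text{$f(K)$})\subset\fil(K)$. Passing to $\fil(K)$ we may thus assume from the start that $K$ is filled, i.e. $A\setminus K$ has exactly two connected components $U_+,U_-$, one neighborhood of each end. Now lift: $\widetilde K=\pi^{-1}(K)$ is a closed, $T$-invariant, connected-in-the-essential-sense set separating $\widetilde A$ into $\widetilde U_+$ (a neighborhood of the top edge $\mathbb R\times\{1\}$) and $\widetilde U_-$. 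Because $f(K)\subset K$ we get $\widetilde f(\widetilde K)\subset\widetilde K$, and $\widetilde f$ permutes $\{\widetilde U_+,\widetilde U_-\}$ — fixing each if $f$ fixes the ends, swapping them if $f$ interchanges them (replace $f$ by $f^2$ in that case, which only shrinks residues to be handled and is harmless).

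The core of the argument is then a forcing/index computation on $\widetilde f^n$. Fix $n$ and an admissible $k$. Consider $g=T^{-k}\widetilde f^{\,n}:\widetilde A\to\widetilde A$; I want a fixed point of $g$ whose projection lies in $K$. The map $g$ still satisfies $g(\widetilde K)\subset\widetilde K$ up to the translation, more precisely $T^{-k}\widetilde f^{\,n}(\widetilde K)=T^{-k}\widetilde K=\widetilde K$ since $\widetilde K$ is $T$-invariant; so $g(\widetilde K)\subset\widetilde K$ and $g$ preserves the separation. The key step is to show $g$ has a fixed point on $\widetilde K$ by an argument in the spirit of Brown's one-page proof of Cartwright–Littlewood: if $g$ had no fixed point on $\widetilde K$, one could build a free (non-self-intersecting under iteration) behavior or a consistent "displacement direction" along $\widetilde K$; combined with $g$ mapping $\widetilde K$ into itself and with the two complementary pieces being a top and a bottom half-plane, one derives a contradiction with a Brouwer-translation-type statement or a winding/degree count on $\widetilde K$ forced by $|d^n-1|\ge 1$. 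In effect, the essential invariant continuum plus positive degree forces the translation number to vanish on $\widetilde K$ for the correct $k$, producing a fixed point of $g$ there; projecting gives a periodic point of $f$ in $K\subset\fil(K)$ in the prescribed Nielsen class.

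I expect the main obstacle to be exactly this last step: extracting a fixed point \emph{on $\widetilde K$} rather than merely somewhere in $\widetilde A$. Theorem \ref{per} already gives completeness, hence a representative of each class \emph{in $A$}, and the whole content here is the Cartwright–Littlewood-type refinement that one can take the representative in $\fil(K)$. The subtlety is that $\widetilde f^{\,n}$ is only a map (not a homeomorphism) and $\widetilde K$ need not be locally connected, so I cannot quote Brown's homeomorphism result directly; I would instead mimic it by working with the filled set, using that $g$ restricted to $\widetilde K$ is a proper map into itself separating the two ends, and run a prime-end or Carathéodory-type argument on the two complementary half-planes to show the induced boundary maps have a fixed prime end, which then yields a fixed point of $g$ in $\widetilde K$. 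Handling the orientation-reversing case (ends interchanged) and the bookkeeping of which integer $k$ realizes which residue modulo $d^n-1$ are routine once the fixed-point-on-$K$ mechanism is in place.
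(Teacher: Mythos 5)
Your setup is right (reduce via Corollary \ref{final} to showing that every lift $G$ of every $f^n$ has a fixed point in $\pi^{-1}(\fil(K))$, and the reduction to a filled $K$ is legitimate), but the core step --- producing a fixed point of $G$ in $\pi^{-1}(\fil(K))$ --- is exactly the part you do not prove. You describe it as ``a consistent displacement direction along $\widetilde K$'' or ``a winding/degree count,'' acknowledge it as the main obstacle, and then fall back on ``a prime-end or Carath\'eodory-type argument''; none of this is carried out, and a fixed prime end does not by itself yield a fixed point in the continuum (that implication is essentially the content of Cartwright--Littlewood-type theorems, not a free step). Moreover, your candidate invariant set $\widetilde K$ is unbounded, so no Brouwer-type theorem applies to it directly; the ingredient you are missing is the compact invariant set $H_G^{-1}(0)\subset\pi^{-1}(\fil(K))$ supplied by the semiconjugacy of Lemma \ref{semi} together with its surjectivity (Lemma \ref{sur}). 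The paper's orientation-preserving case is precisely the Brown-style argument you gesture at, made precise with that compact set: assuming $\fix(G)\cap\hat K=\emptyset$, one passes to the connected component $U$ of the complement of $\fix(G)$ (in the two-point compactification of $\R\times[0,1]$) containing the closure of $\hat K$, lifts $G|_U$ to the universal cover of $U$ so as to preserve a component of the preimage of that closure, observes that this lift has the compact invariant set coming from $H_G^{-1}(0)$ and no fixed points, and contradicts Brouwer's Theorem \ref{brouwer}. Without $H_G^{-1}(0)$ your argument has nothing compact to feed to Brouwer.

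Two smaller but real errors: (i) your worry that $\widetilde f^{\,n}$ is ``only a map (not a homeomorphism)'' is unfounded --- since $f$ is a covering of the annulus, every lift to the universal cover is a homeomorphism of the plane, and this is what makes Brouwer and Kuperberg applicable at all; (ii) when $f$ interchanges the ends you cannot ``replace $f$ by $f^2$,'' since you still need fixed points of lifts of the odd powers $f^n$, which also interchange the ends. The paper treats the orientation-reversing cases separately: for negative degree, Kuperberg's Theorem \ref{kryst} already places the fixed point in $\pi^{-1}(K)$; for positive degree with interchanged ends, $G$ maps each component of $\tilde A\setminus\hat K$ off itself, so every fixed point of $G$ automatically lies in $\hat K$ and one only needs existence, which is Case 2 of the proof of Theorem \ref{per}.
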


{\bf Notations.} Throughout this article, $A= S^1\times (0,1)$ is the open annulus, $\tilde A=%
{\mathbb{R}}\times (0,1)$ its universal covering space, and $\pi: \tilde
A\to A$ the universal covering projection.
We will denote by $F$ any lift of $f: A \to A$ to the universal covering, that is, $F$ is a map
satisfying $f \pi=\pi F$. Note that  $F (x+1, y) = F (x,y) + (d,0)$ if $f$ has degree $d$. To
lighten notation, if $z\in \tilde A$, we write $z+k$ for the point $z+
(k,0), k\in {\mathbb{Z}}$. The map  $m_d: S^1 \to S^1$ is defined as $m_d (z) = z^d$.

We aknowledge the referee for many useful observations and providing the proof of the example given in 5.7.

\section{Nielsen theory background.}\label{niel}

In this section, we gather the necessary information on Nielsen Theory.  For what follows, $f:A\to A$ is any continuous map. If $p,q\in \fix(f)$, then $p$ and $q$ are said to be {\it Nielsen equivalent} if there exists a curve $\gamma$ from $p$ to $q$ such that $f(\gamma)$ and $\gamma$ are homotopic
with fixed endpoints.
If $p$ and $q$ are periodic points of $f$, then $p$ and $q$ are Nielsen equivalent if they are equivalent as fixed points of some $f^k$, $k\geq 1$.
The definition of Nielsen equivalence does not depend on the choice of $k$ :

\begin{lemma}
\label{lomismo}  Let $p,q \in \fix(f)$ and let $\gamma$ be a curve from $p$ to $q$. If $\gamma\sim f^k\gamma$ for some $k>1$, then $\gamma\sim f\gamma$.
\end{lemma}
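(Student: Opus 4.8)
The plan is to pass to the universal covering $\pi\colon\tilde A\to A$, where $\tilde A=\R\times(0,1)$ is contractible, and to reformulate ``$\gamma$ is homotopic rel endpoints to $f^j\gamma$'' as a fixed-point condition for a lift of $f$. First I would lift $\gamma$ to a path $\tilde\gamma$ in $\tilde A$ and set $\tilde p=\tilde\gamma(0)$, $\tilde q=\tilde\gamma(1)$, the resulting lifts of $p$ and $q$. Since $f(p)=p$, among all lifts of $f$ there is a unique one, call it $F$, with $F(\tilde p)=\tilde p$: start from any lift $F_0$, note $F_0(\tilde p)=\tilde p+j$ for some $j\in\Z$, and take $F(z)=F_0(z)-j$. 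Then for every $m\ge1$, $F^m$ is a lift of $f^m$ with $F^m(\tilde p)=\tilde p$, so by uniqueness of path lifting $F^m\tilde\gamma$ is precisely the lift of $f^m\gamma$ issuing from $\tilde p$ (and $f^m\gamma$ is again a curve from $p$ to $q$, since $f^m(p)=p$ and $f^m(q)=q$).

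The elementary fact I would invoke is: a curve $\delta$ from $p$ to $q$ satisfies $\delta\sim\gamma$ rel endpoints if and only if the lift of $\delta$ issuing from $\tilde p$ terminates at $\tilde q$. The ``only if'' follows by lifting a homotopy rel endpoints, which is constant over the two endpoints; the ``if'' holds because any two paths in the contractible space $\tilde A$ with common endpoints are homotopic rel endpoints, and $\pi$ pushes such a homotopy down. Applying this with $\delta=f^k\gamma$ and lift $F^k\tilde\gamma$, the hypothesis $\gamma\sim f^k\gamma$ yields $F^k(\tilde q)=\tilde q$; it then suffices to prove $F(\tilde q)=\tilde q$, since the same criterion with $\delta=f\gamma$ and lift $F\tilde\gamma$ gives $\gamma\sim f\gamma$, which is the conclusion.

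It remains to compute. As $q$ is fixed, $F(\tilde q)=\tilde q+n$ for some $n\in\Z$. Writing $T(z)=z+1$, a degree $d$ lift satisfies $F\circ T=T^{d}\circ F$, hence $F\circ T^{r}=T^{dr}\circ F$ for all $r\in\Z$; an induction on $m$ then gives $F^m(\tilde q)=\tilde q+n\,\sigma_m$ with $\sigma_m:=1+d+\cdots+d^{\,m-1}$, the inductive step being $F^{m+1}(\tilde q)=F(T^{n\sigma_m}\tilde q)=T^{dn\sigma_m}F(\tilde q)=\tilde q+n(1+d\sigma_m)=\tilde q+n\sigma_{m+1}$. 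Taking $m=k$ and using $F^k(\tilde q)=\tilde q$ forces $n\,\sigma_k=0$; since $|d|>1$ we have $\sigma_k=(d^k-1)/(d-1)\ne0$, so $n=0$ and $F(\tilde q)=\tilde q$, as needed. I do not expect a genuine difficulty: the only delicate points are pinning down the lift $F$ so that the same $F$ governs both $f\gamma$ and $f^k\gamma$ (handled by normalizing $F(\tilde p)=\tilde p$ and using the literal power $F^k$), and noting that the geometric sum $\sigma_k$ could a priori vanish — the one thing that could break the argument — but does not, precisely because $|d|>1$.
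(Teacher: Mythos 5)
Your proof is correct and follows essentially the same route as the paper's: both normalize to the lift $F$ fixing $\tilde p$, translate homotopy rel endpoints into the condition $F^m(\tilde q)=\tilde q$, and exploit the geometric sum $1+d+\cdots+d^{k-1}\neq 0$ (the paper argues by contrapositive, assuming $\gamma\not\sim f\gamma$ and deducing $F^k(\tilde q)=\tilde q+l\sum_{i=0}^{k-1}d^i\neq\tilde q$, but this is the identical computation). Your version is slightly more complete in that it spells out the lifting criterion and the nonvanishing of $\sigma_k$ explicitly.
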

\begin{proof} Let $\tilde p$ be a lift of $p$ and let $F$ be the lift of $f$ that fixes $\tilde p$.  Let $\tilde \gamma$ be the lift of $\gamma$ starting at $\tilde p$, and let $\tilde q$ be
the endpoint of $\tilde \gamma$. Assume  that $\gamma$ is not homotopic to $f(\gamma)$.  This implies that $F(\tilde q)\neq \tilde q$, and so there exists $l\in\Z$,
$l\neq 0$ such that $F(\tilde q)= \tilde q+l$.  So, $F^k (\tilde q) = \tilde q+\sum _{i=0}^{k-1} d^i l\neq \tilde q $ and so $f^k\gamma$ is not homotopic to $\gamma$.
\end{proof}

\begin{lemma}
\label{l1}
Let $p$ and $q$ be fixed points of $f$. The following conditions are equivalent:
\begin{enumerate}
\item
$p$ and $q$ are Nielsen equivalent.
\item

If $F$ is any lift of $f$, and $\tilde p$ is a lift of $p$, there exist $\tilde q$ a lift of  $q$  such that $F(\tilde p)- \tilde p= F(\tilde q)-\tilde q$.
\end{enumerate}

\end{lemma}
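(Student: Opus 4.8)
The plan is to prove the two implications separately, using the lift $F$ that fixes a chosen lift $\tilde p$ of $p$ as the bookkeeping device, exactly as in the proof of Lemma \ref{lomismo}.

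For $(1)\Rightarrow(2)$: suppose first that $p$ and $q$ are Nielsen equivalent, witnessed by a curve $\gamma$ from $p$ to $q$ with $\gamma\sim f\gamma$ rel endpoints. Fix any lift $F_0$ of $f$ and a lift $\tilde p$ of $p$; there is a unique deck transformation translation so that, replacing $F_0$ by $F=F_0+k$ for a suitable $k\in\Z$, we have $F(\tilde p)=\tilde p$. (Here I am using that $p$ is fixed, so $F_0(\tilde p)-\tilde p\in\Z$.) Now lift $\gamma$ to the path $\tilde\gamma$ starting at $\tilde p$ and let $\tilde q$ be its endpoint; then $\tilde q$ is a lift of $q$. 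Because $\gamma\sim f\gamma$ rel endpoints, the lift of $f\gamma$ starting at $F(\tilde p)=\tilde p$, which is exactly $F\tilde\gamma$, must end at the same point as $\tilde\gamma$, i.e. $F(\tilde q)=\tilde q$. Hence $F(\tilde p)-\tilde p=0=F(\tilde q)-\tilde q$. For a general lift $F'$, write $F'=F+m$; then $F'(\tilde p)-\tilde p=m=F'(\tilde q)-\tilde q$, so (2) holds for every lift once the lift of $q$ is chosen appropriately (note the chosen $\tilde q$ does not depend on which $F'$ we use).

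For $(2)\Rightarrow(1)$: assume $F$ is a lift, $\tilde p$ a lift of $p$, and $\tilde q$ a lift of $q$ with $F(\tilde p)-\tilde p=F(\tilde q)-\tilde q=:c$. Replacing $F$ by $F-c$ we may assume $F(\tilde p)=\tilde p$ and $F(\tilde q)=\tilde q$. Choose any path $\tilde\gamma$ in $\tilde A$ from $\tilde p$ to $\tilde q$ (possible since $\tilde A$ is path-connected) and let $\gamma=\pi\tilde\gamma$, a curve from $p$ to $q$. Then $F\tilde\gamma$ is a path from $\tilde p$ to $\tilde q$ lifting $f\gamma$; since $\tilde A$ is simply connected, $F\tilde\gamma$ and $\tilde\gamma$ are homotopic rel endpoints in $\tilde A$, and projecting this homotopy gives $\gamma\sim f\gamma$ rel endpoints. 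Thus $p$ and $q$ are Nielsen equivalent.

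The only genuinely delicate point is the normalization of the lift: one must check that among all lifts of $f$ there is one fixing a prescribed lift $\tilde p$ of the fixed point $p$, and that the statement of (2) is insensitive to which lift is used provided the lift of $q$ is allowed to vary. This is where the hypothesis that $p$ is fixed (not merely periodic) is used, and it is entirely parallel to the setup already employed in Lemma \ref{lomismo}; no new difficulty arises. The rest is the standard unique path-lifting and simple-connectedness argument for the universal cover.
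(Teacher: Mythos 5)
Your proof is correct and follows essentially the same route as the paper's: lift the Nielsen arc from a chosen $\tilde p$ and use homotopy lifting to identify the endpoint of the lift of $f\gamma$ for $(1)\Rightarrow(2)$, and simple connectedness of $\tilde A$ for $(2)\Rightarrow(1)$. The only cosmetic difference is that you normalize the lift so that $F(\tilde p)=\tilde p$ and then translate by $m$, while the paper works directly with $F(\tilde p)=\tilde p+l$; both handle the quantifiers correctly.
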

\begin{proof} (1) $\Rightarrow$ (2): Let $F$ be a lift of $f$, and  $\tilde p$ any lift of $p$.  Then, as $p\in \fix (f)$, there exists $l\in \Z$ such that $F(\tilde p) = \tilde p+l$.  Let $\tilde q$ be the
endpoint of $\tilde \gamma$, the lift of $\gamma$ starting at $\tilde p$, where $\gamma$ is the arc given by the Nielsen equivalence.  As $\gamma \sim f(\gamma)$, the lift of $f(\gamma)$ starting
at $\tilde p +l$
must end at $\tilde q +l$.  On the other hand, this lift must coincide with $F (\tilde \gamma)$, which gives $F(\tilde q) = \tilde q + l$.

(2) $\Rightarrow$ (1):  Let $\tilde p$ be a lift of $p$, and let $F$ be the lift of $f$ such that $F(\tilde p) = \tilde p$. There exist $\tilde q$ a lift of  $q$  such that
$F(\tilde q)= \tilde q$.  Take any arc $\tilde \gamma$ joining $\tilde p$ and $\tilde q$.  Then, $F(\tilde \gamma)$ is obviously homotopic to $\tilde \gamma$.  So, $\gamma = \pi (\tilde \gamma)$
realizes the Nielsen equivalence between $p$ and $q$.

\end{proof}

Note that the number of Nielsen classes of fixed points for the map $m_d$ coincides with its number of fixed points which is $|d-1|$. We will give a simple proof of the following fact:

\begin{thm}
\label{t1}
If $f$ is a map of degree $d$, $|d|>1$, of the annulus, then the number of equivalence classes of fixed points of $f$ is less than or equal to
$|d-1|$.
\end{thm}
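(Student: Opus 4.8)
The plan is to use the characterization of Nielsen equivalence from Lemma \ref{l1}(2) to turn a counting problem about fixed points into a counting problem about integers. Fix once and for all a lift $F$ of $f$. By Lemma \ref{l1}, two fixed points $p,q$ of $f$ are Nielsen equivalent precisely when there are lifts $\tilde p$ of $p$ and $\tilde q$ of $q$ with $F(\tilde p)-\tilde p = F(\tilde q)-\tilde q$; here $F(\tilde p)-\tilde p$ makes sense as an element of $\Z$ (with the convention $z+k$ of the Notations), since $\pi F(\tilde p)=f p = p=\pi\tilde p$ forces $F(\tilde p)-\tilde p\in\Z\times\{0\}$. So to each fixed point $p$ we want to attach a well-defined element of a set of size $|d-1|$, in such a way that Nielsen-equivalent points get the same label and the label determines the class.

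The key observation is the following. If $\tilde p$ is one lift of a fixed point $p$, every other lift is $\tilde p + m$ for $m\in\Z$, and using $F(z+m)=F(z)+dm$ we get
\begin{equation*}
F(\tilde p + m) - (\tilde p + m) = \bigl(F(\tilde p)-\tilde p\bigr) + (d-1)m .
\end{equation*}
Hence the integer $F(\tilde p)-\tilde p$ is well defined modulo $(d-1)$, independently of the chosen lift of $p$; call this residue $\rho(p)\in\Z/(d-1)\Z$. The displayed identity in Lemma \ref{l1}(2) says exactly that $p$ and $q$ are Nielsen equivalent iff some lift-values agree on the nose, and by the computation above this happens iff $\rho(p)=\rho(q)$. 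Therefore $\rho$ descends to an injection from the set of Nielsen classes of $\fix(f)$ into $\Z/(d-1)\Z$, a set with $|d-1|$ elements, which is precisely the claimed bound.

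I would present it in that order: first reduce (via Lemma \ref{l1}) to the integer invariant $F(\tilde p)-\tilde p$; then carry out the one-line computation showing its class mod $(d-1)$ is independent of the lift of $p$; then observe that Nielsen equivalence is equivalent to equality of these residues, so the number of classes is at most $|\Z/(d-1)\Z| = |d-1|$. There is essentially no analytic content and no serious obstacle here — the only point requiring a little care is bookkeeping about lifts: checking that $F(\tilde p)-\tilde p$ is genuinely an integer (not just a point of $\tilde A$), that changing the lift of $p$ changes it by a multiple of $d-1$, and that the two directions of Lemma \ref{l1}(2) match up with equality of residues. One should also note $d-1\neq 0$ since $|d|>1$, so $\Z/(d-1)\Z$ is finite. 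This gives the upper bound; the thrust of the paper, of course, is that under the hypothesis of Theorem \ref{per} this bound is attained for every iterate, which is what ``complete'' will mean.
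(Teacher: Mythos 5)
Your proposal is correct and follows essentially the same route as the paper: both reduce to the integer displacement $F(\tilde p)-\tilde p$ via Lemma \ref{l1}, observe that changing the lift of $p$ shifts it by a multiple of $d-1$, and conclude that Nielsen classes inject into a set of $|d-1|$ residues. The only cosmetic difference is that the paper normalizes by choosing the unique lift whose displacement lies in $\{1,\dots,|d-1|\}$, whereas you work directly with the class in $\Z/(d-1)\Z$.
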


\begin{proof}
Let $f$ be a degree $d$ map of the annulus, let $F$ be a lift of $f$ and $p\in \fix(f)$. If $\tilde p$ is any lift of $p$ then there exists an integer $l$ such
that $F(\tilde p )=\tilde p +l$. Moreover, as
$F(x+1)=F(x)+d$, one can choose  $\tilde p$ so that $l$ is an integer between $1$ and $|d-1|$. To see this, note that if $j\in \Z$, then $F(\tilde p +j) = F (\tilde p) + dj = \tilde p +l+dj = \tilde p +j + j (d-1)+l$.  So, there exists a unique $j\in\Z$ such that $F (\tilde p +j) = \tilde p +j +L$, $1\leq L\leq |d-1|$.

This number
$L= \ell_p$ is uniquely determined by $p$ and the lift $F$. The previous lemma implies that $p$ and $q$ are equivalent iff $\ell_p=\ell_q$, which clearly implies the assertion.

\end{proof}

The previous result also follows from the fact that the number of all classes is an homotopy type invariant and that Theorem \ref{t1} holds for the circle (see \cite{bol} pages 618 and 630).

The period of a Nielsen equivalence class is defined as the minimum of the periods of
periodic points of $f$ in the class. Let $N_k(f)$ be the number of different Nielsen classes of period $k$ of the map $f$.

\begin{defi}
A degree $d$ map $f$ of the annulus is said to be {\it complete} if for each positive integer $k$, it holds that $N_{k}(f)=N_k(m_d)$.

\end{defi}

Note that Theorem \ref{t1} implies that a complete map has the maximum possible number of Nielsen classes in each period.

The following two results will be used in the proof of Theorem \ref{per}.

 \begin{lemma}\label{crcomp} A map $f$ is complete iff $N_1(f^k)=N_{1}(m_d^k)$ for every positive $k$.
 \end{lemma}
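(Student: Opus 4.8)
The plan is to prove the two implications separately, using two facts valid for any degree $d$ map $f$ with $|d|>1$. (i) By Lemmas~\ref{lomismo} and~\ref{l1}, two periodic points of $f$ are Nielsen equivalent precisely when they are equivalent as fixed points of $f^{k}$ for one (equivalently every) common multiple $k$ of their periods; hence the Nielsen classes of $\fix(f^{k})$ are exactly the nonempty intersections $C\cap\fix(f^{k})$, where $C$ runs over the Nielsen classes of periodic points of $f$, so $N_{1}(f^{k})=\#\{C: C\cap\fix(f^{k})\ne\emptyset\}$; since a class of period $j\mid k$ has a point in $\fix(f^{k})$, this gives $\sum_{j\mid k}N_{j}(f)\le N_{1}(f^{k})$. (ii) Theorem~\ref{t1} applied to $f^{k}$ (degree $d^{k}$, $|d^{k}|>1$) gives $N_{1}(f^{k})\le|d^{k}-1|$. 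For $m_{d}$ all Nielsen classes are singletons, so $\sum_{j\mid k}N_{j}(m_{d})=\#\fix(m_{d}^{k})=|d^{k}-1|=N_{1}(m_{d}^{k})$. Granting these, if $f$ is complete then $\sum_{j\mid k}N_{j}(f)=\sum_{j\mid k}N_{j}(m_{d})=|d^{k}-1|$, and (i)+(ii) squeeze $N_{1}(f^{k})$ between $|d^{k}-1|$ and $|d^{k}-1|$; hence $N_{1}(f^{k})=N_{1}(m_{d}^{k})$.

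For the converse, assume $N_{1}(f^{k})=|d^{k}-1|$ for all $k$, and fix a lift $F$. The proof of Theorem~\ref{t1} attaches to $p\in\fix(f^{k})$ an integer $\ell_{p}^{(k)}\in\{1,\dots,|d^{k}-1|\}$ classifying its Nielsen class, and a one-line lift computation (if $p\in\fix(f^{j})$, $j\mid k$, and $\per(p)=m$, $F^{m}\tilde p=\tilde p+c$, then $F^{j}\tilde p=\tilde p+c\,\frac{d^{j}-1}{d^{m}-1}$ and $F^{k}\tilde p=\tilde p+c\,\frac{d^{k}-1}{d^{m}-1}$) gives the compatibility $\ell_{p}^{(k)}\equiv\ell_{p}^{(j)}\cdot\frac{d^{k}-1}{d^{j}-1}\pmod{d^{k}-1}$. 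Identify $\Z/(d^{j}-1)$ with $\frac1{|d^{j}-1|}\Z/\Z\subset\Q/\Z$; then $x\mapsto x\,\frac{d^{k}-1}{d^{j}-1}$ is the inclusion $\frac1{|d^{j}-1|}\Z/\Z\hookrightarrow\frac1{|d^{k}-1|}\Z/\Z$, and all these groups sit inside $\mathcal L:=\bigcup_{k}\frac1{|d^{k}-1|}\Z/\Z$. Under the hypothesis, for each $k$ the map $p\mapsto\ell_{p}^{(k)}$ induces a bijection between the Nielsen classes of $\fix(f^{k})$ and $\frac1{|d^{k}-1|}\Z/\Z$, and by the compatibility these bijections commute with the inclusions. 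Hence each periodic point $p$ acquires a well defined invariant $x_{p}\in\mathcal L$ (the element corresponding to the Nielsen class of $p$ in $\fix(f^{\per(p)})$), periodic points are Nielsen equivalent iff they share the same $x$, and — since every class of every $\fix(f^{k})$ is nonempty — the assignment $C\mapsto x_{C}$ is a bijection from the Nielsen classes of periodic points of $f$ onto all of $\mathcal L$.

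It remains to match, for $x\in\mathcal L$, the period $m(C_{x})$ of $C_{x}$ with $\mathrm{lev}(x):=\min\{k: x\in\frac1{|d^{k}-1|}\Z/\Z\}$ (the multiplicative order of $d$ modulo the denominator of $x$). If $x_{p}=x$ then $x\in\frac1{|d^{\per(p)}-1|}\Z/\Z$, so $\per(p)\ge\mathrm{lev}(x)$; thus $m(C_{x})\ge\mathrm{lev}(x)$. Taking $k=\mathrm{lev}(x)$, the class of $\fix(f^{k})$ labelled by $x$ is nonempty, so contains some $p$ with $\per(p)\mid k$ and $x_{p}=x$; then $\per(p)\ge\mathrm{lev}(x)=k$ forces $\per(p)=k$, so $m(C_{x})\le\mathrm{lev}(x)$. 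Hence $m(C_{x})=\mathrm{lev}(x)$. Since the fixed points of $m_{d}^{k}$ are $\{e^{2\pi i x}: x\in\frac1{|d^{k}-1|}\Z/\Z\}$ with $\per(e^{2\pi i x})=\mathrm{lev}(x)$, we conclude $N_{m}(f)=\#\{x\in\mathcal L:\mathrm{lev}(x)=m\}=N_{m}(m_{d})$ for all $m$, i.e.\ $f$ is complete. The one genuinely delicate point, and the obstacle, is exactly here: a periodic point of $f$ of period $a$ may carry an $\ell$-value of level a proper divisor of $a$ (this really happens — a period-two orbit can be Nielsen equivalent to a fixed point), so the naive identity $N_{1}(f^{k})=\sum_{j\mid k}N_{j}(f)$ may fail and plain M\"obius inversion is not available; the $\Q/\Z$ bookkeeping is precisely what reorganizes the classes so that each still has level equal to its period, matching $m_{d}$ class by class. (For $d<-1$ one uses $|d^{k}-1|$ throughout and the divisibility $|d^{j}-1|\mid|d^{k}-1|$ for $j\mid k$, which is all that is needed.)
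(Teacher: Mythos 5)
Your proof is correct, but it takes a genuinely different route from the paper's, which is much shorter: the paper dismisses the forward implication as clear and proves the converse by induction on $k$, writing $|d^{k}-1|=N_1(f^k)=N_k(f)+\sum_{k'\mid k,\,k'<k}N_{k'}(f)$ and subtracting the inductively known terms $N_{k'}(f)=N_{k'}(m_d)$. The pivot of that argument is exactly the divisor-sum identity $N_1(f^k)=\sum_{k'\mid k}N_{k'}(f)$, which the paper uses without comment and which you correctly single out as the delicate point: a Nielsen class may contain points of several periods, so a class meeting $\fix(f^k)$ need not have period dividing $k$, and a priori the identity is only the inequality $N_1(f^k)\geq\sum_{k'\mid k}N_{k'}(f)$. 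Your squeeze argument ($\sum_{j\mid k}N_j(f)\le N_1(f^k)\le |d^k-1|$, the upper bound coming from Theorem \ref{t1} applied to $f^k$) disposes of the forward direction without ever needing the exact identity, and your $\Q/\Z$-labelling of Nielsen classes by the compatible invariants $\ell_p^{(k)}$ proves the converse directly, by exhibiting a period-preserving bijection between the Nielsen classes of $f$ and the periodic points of $m_d$; in particular it shows that under the hypothesis $N_1(f^j)=|d^j-1|$ for all $j$ every class has period equal to the level of its label, which is precisely what validates the divisor-sum identity the paper's induction leans on. So your version is longer but self-contained and in effect supplies the justification that the paper leaves implicit, while also yielding the stronger statement that the class structure of $f$ matches that of $m_d$ label by label; the paper's version buys brevity at the cost of leaving that step unexamined.
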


 \begin{proof} It is clear that $f$ complete implies that condition. For the converse, we proceed by induction to prove that $N_{k}(f)=N_k(m_d)$ for all $k$:
for $k=1$ it is obvious by hypothesis.

Now assume that for every $k'<k$ it holds that $N_{k'}(f)=N_{k'}(m_d)$. Then
$$
   |d^{k}-1| = N_1(f^k)=    N_k(f)+\sum_{k'|k}N_{k'}(f),
$$
where $k'|k$ means $k$ is a multiple of $k'$ and $k'<k$. On the other hand, the same equation holds for $m_d$ in place of $f$, and using the induction hypothesis, the equation $N_k(f)=N_k(m_d)$ follows.

 \end{proof}

\begin{lemma}
\label{d-1} Suppose that $\fix(F)\neq\emptyset$ for every lift $F:\tilde A\to \tilde A$ of $f$. Then $f$ has $|d-1|$ different Nielsen classes of fixed points.
\end{lemma}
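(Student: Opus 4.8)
Suppose that $\fix(F)\neq\emptyset$ for every lift $F:\tilde A\to \tilde A$ of $f$. Then $f$ has $|d-1|$ different Nielsen classes of fixed points.

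The plan is to derive the lower bound ``at least $|d-1|$ classes'' from the hypothesis and then invoke Theorem~\ref{t1} for the matching upper bound. The starting point is the bookkeeping already done in the proof of Theorem~\ref{t1}. Fix once and for all a lift $F$ of $f$. For $p\in\fix(f)$ and a lift $\tilde p$ of $p$ there is an integer $l$ with $F(\tilde p)=\tilde p+l$, and replacing $\tilde p$ by $\tilde p+j$ replaces $l$ by $l+j(d-1)$; hence the residue of $l$ modulo $|d-1|$ depends only on $p$ and the fixed lift $F$. Call $\ell_p\in\{1,\dots,|d-1|\}$ its normalized representative. By Lemma~\ref{l1}(2) together with this observation, two fixed points $p,q$ of $f$ are Nielsen equivalent if and only if $\ell_p=\ell_q$. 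Consequently the number of Nielsen classes of fixed points of $f$ is exactly the cardinality of $\{\ell_p : p\in\fix(f)\}\subset\{1,\dots,|d-1|\}$. This reproves the inequality of Theorem~\ref{t1}, so it remains only to check that every value $1,\dots,|d-1|$ is attained.

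Next I would use that the lifts of $f$ are precisely the maps $F_k:=F+(k,0)$, $k\in\Z$. Given a target value $L\in\{1,\dots,|d-1|\}$, pick $k\in\Z$ with $k\equiv -L\pmod{|d-1|}$. By hypothesis $F_k$ has a fixed point $\tilde p$, that is, $F(\tilde p)=\tilde p-k$. Put $p:=\pi(\tilde p)\in\fix(f)$; then the integer attached to the lift $\tilde p$ of $p$ is $-k$, so $\ell_p$ is the normalized representative of $-k\equiv L\pmod{|d-1|}$, i.e. $\ell_p=L$. Hence every value in $\{1,\dots,|d-1|\}$ is realized, so $f$ has at least $|d-1|$ Nielsen classes of fixed points, and combined with Theorem~\ref{t1} it has exactly $|d-1|$.

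I do not expect a substantive obstacle: everything beyond the use of the hypothesis is already contained in the proof of Theorem~\ref{t1}, and the argument is essentially ``the translation numbers $-k$ of the various lifts $F_k$ sweep out all residues modulo $|d-1|$''. The only points deserving a moment's care are arithmetic: that $\Z/(d-1)\Z=\Z/|d-1|\Z$, so the normalization to $\{1,\dots,|d-1|\}$ is unambiguous even when $d<-1$, and that the translation number of $F$ at a lift of a fixed point is a genuine invariant of the pair (fixed point, chosen lift $F$) modulo $|d-1|$.
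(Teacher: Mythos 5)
Your proposal is correct and is essentially the paper's own argument: both exploit that the lifts of $f$ form the family $F_k=F+k$, use the hypothesis to get a fixed point of each $F_k$, and observe that the translation integers of these fixed points differ by non-multiples of $d-1$ for $k$ ranging over $|d-1|$ consecutive values, hence give $|d-1|$ distinct Nielsen classes, with Theorem \ref{t1} supplying the matching upper bound. The only difference is cosmetic — you phrase it via the invariant $\ell_p$ from the proof of Theorem \ref{t1}, while the paper directly checks pairwise inequivalence of the projected fixed points — and your arithmetic checks out.
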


\begin{proof}
Fix a lift $F_0=F$ of $f$, and for $k\in {\mathbb{Z}}$ define $F_k (x) = F (x) +
k $. Note that every lift of $f$ belongs to the family $(F_k)_{k\in {\mathbb{%
Z}}}$. For every $k\in {\mathbb{Z}}$, let $x_k\in \tilde A$ be such that $F_k (x_k) = x_k$. We want to show that there are $|d-1|$ different Nielsen classes of fixed points.

Suppose there exists $i\neq 0$ such that $\pi(x_i)$ is Nielsen equivalent to $\pi(x_0)$.  As $F(x_0)=x_0$, then by Lemma \ref{l1}, there exists $l\in \Z$ such that $F(x_i + l ) = x_i +l$.

 As $F_i(x_i)=x_i$ and $F_i(x_i)=F(x_i)+i$ then $F(x_i) = x_i -i$.  So,
 $$ x_i +l=F(x_i + l )=F(x_i)+ld=x_i-i+ld  $$

  Then $i= l (d-1)$.  As $i\neq 0$,then $l\neq 0$ and the points $\pi (x_0), \pi (x_1), \ldots , \pi (x_{|d-1|-1})$ are
all in different Nielsen classes. So, there are at least $|d-1|$ different Nielsen classes, and by Theorem \ref{t1} there are exactly $|d-1|$ different Nielsen classes.
\end{proof}

In most cases we will prove completeness by means of the following corollary:

\begin{clly}\label{final} If for every $k$ every lift of $f^k$ has a fixed point, then $f$ is complete.
\end{clly}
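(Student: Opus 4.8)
The plan is to deduce the statement directly from Lemmas \ref{crcomp} and \ref{d-1}. First I would fix a positive integer $k$ and observe that $f^k$ is again a covering map of the annulus, now of degree $d^k$, with $|d^k|>1$ since $|d|>1$; moreover the set of all lifts of $f^k$ to $\tilde A$ is exactly the family of integer translates of one fixed lift, so the hypothesis of the corollary asserts precisely that every lift of the degree $d^k$ map $f^k$ has a fixed point. This is exactly the hypothesis needed to apply Lemma \ref{d-1} to $f^k$ (with $d^k$ in place of $d$).

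Applying Lemma \ref{d-1} to $f^k$ then gives that $f^k$ has exactly $|d^k-1|$ distinct Nielsen classes of fixed points, i.e. $N_1(f^k)=|d^k-1|$ in the notation fixed before Lemma \ref{crcomp}. On the other hand $m_d^k=m_{d^k}$ is the map $z\mapsto z^{d^k}$ of $S^1$, whose fixed points are the solutions of $z^{d^k-1}=1$; there are $|d^k-1|$ of them, and since each fixed point of $m_{d^k}$ is its own Nielsen class (as already noted in the text preceding Theorem \ref{t1}), we get $N_1(m_d^k)=|d^k-1|$. Hence $N_1(f^k)=N_1(m_d^k)$ for every positive $k$, and Lemma \ref{crcomp} yields that $f$ is complete.

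There is essentially no obstacle here: the argument is a bookkeeping reduction. The only points deserving a line of care are checking that "every lift of $f^k$" in the corollary's hypothesis coincides with the family of lifts considered in Lemma \ref{d-1} (which it does, since lifts of a degree $D$ map differ by the deck translations $x\mapsto x+k$), and recalling the count $N_1(m_D)=|D-1|$ applied with $D=d^k$.
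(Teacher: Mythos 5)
Your proposal is correct and is exactly the argument the paper intends: the corollary is stated without proof immediately after Lemma \ref{d-1} precisely because it follows by applying that lemma to each $f^k$ (a degree $d^k$ map) to get $N_1(f^k)=|d^k-1|=N_1(m_d^k)$, and then invoking Lemma \ref{crcomp}. Your added care about identifying the lifts of $f^k$ with the integer translates of a fixed one is a reasonable bookkeeping check and does not change the route.
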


\noindent
{\bf Remarks.}
\begin{enumerate}
\item
Note that the fact that a lift of $f$ has fixed points does not imply that
\textit{every} lift does (see Example \ref{e3}).
\item
The definition of completeness can also be applied to circle maps. If $f$ is a degree $d$ map of the circle, then no matter if it is a covering or not, the number of Nielsen classes of fixed points of $f$ is $|d-1|$. In general, $N_k(f)=N_k(m_d)$ for every $k$. It follows that every circle map is complete.
\item
Theorem \ref{t1} implies that $N_1(f)\leq |d-1|$, and obviously $N_1(f^k)\leq |d^k-1|$. However, it is not true in general that $N_j(f)$ must be less than or equal to $N_j(m_d)$ as the
example in Figure \ref{ejniel} shows.  It is a map of degree $-2$ that has two fixed points $r_1$ and $r_2$, the rays $S_1$ and $S_2$ verify $f(S_1)=S_2$, $f(S_2)=S_1$ and $\{p,q\}$ is a
two periodic  cycle.  The open invariant region bounded by $S_1$ and $S_2$ contains no fixed point.
Note that it has two fixed points (against three of
$m_{-2}$) and has one two periodic cycle formed by $p$ and $q$ (against zero of $m_{-2}$).

It holds that $f^2$ has exactly four fixed points, but it has just three Nielsen classes, one of them contains the two periodic cycle.  According to our definition, this map is not complete and
consequently it cannot leave invariant an essential compact set.

\end{enumerate}

\begin{figure}
\caption{}
\label{ejniel}
\begin{center}
\psfrag{a}{$a$} \psfrag{r1}{$r_1$} \psfrag{r2}{$r_2$}
\psfrag{s1}{$S_1$} \psfrag{s2}{$S_2$}
\psfrag{p}{$p$}\psfrag{q}{$q$}
\includegraphics[scale=0.2]{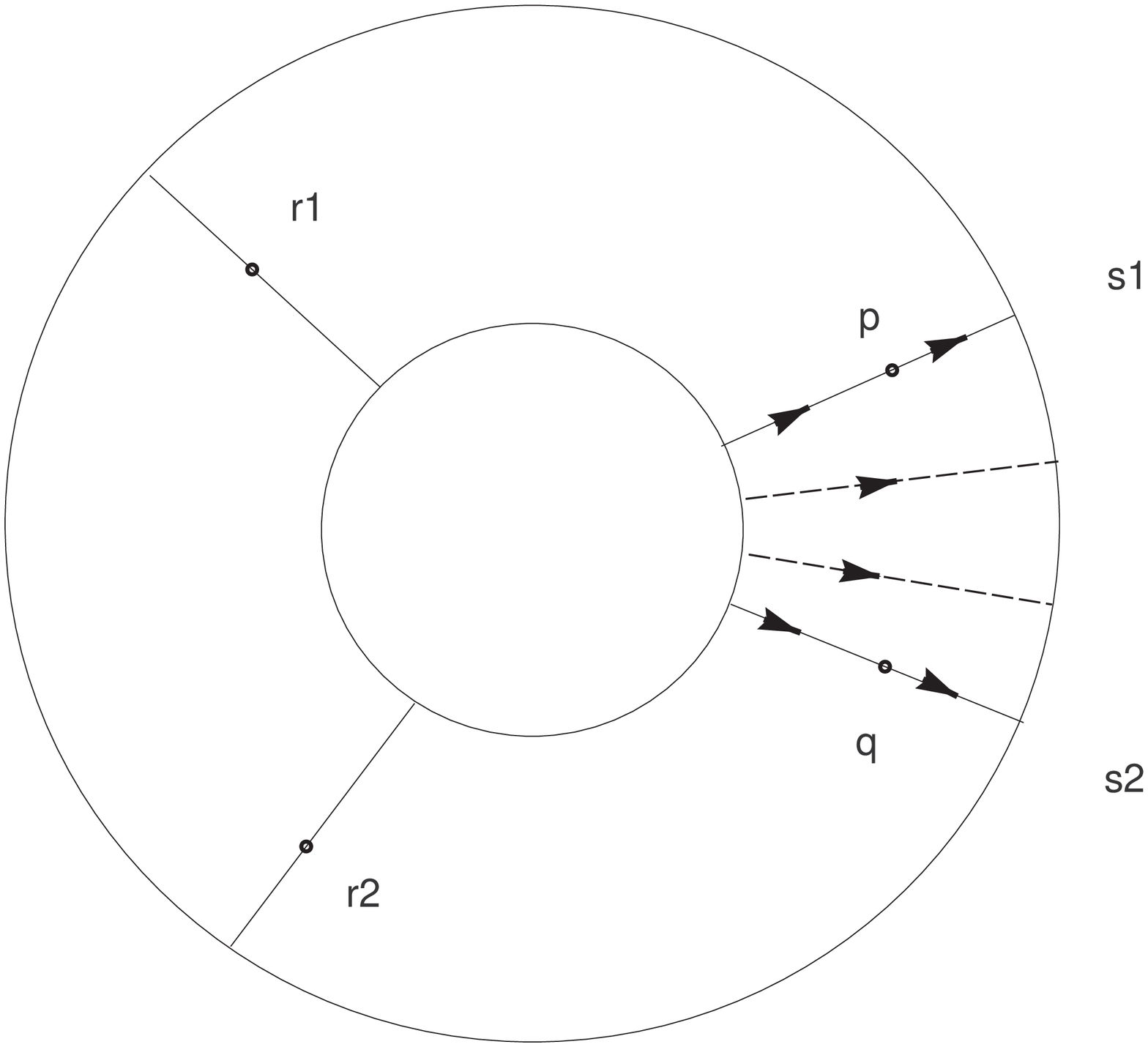}
\end{center}
\end{figure}

\section{Proof of Completeness}\label{proofs}

In this section we prove Theorem \ref{per}. To find periodic orbits we proceed in the standard fashion:  we lift to the universal covering space ${\mathbb{R}}\times (0,1)\sim \R ^2$ and try to use some fixed-point theorems for self-maps of the plane. If $f$
happens to be an orientation preserving covering map, then the lift $F: \R ^2 \to \R ^2$ is an orientation preserving plane homeomorphism, and existence of fixed points is guaranteed by any kind of recurrence:

\begin{thm}
\cite{brou}\label{brouwer} If $F:{\mathbb{R}} ^2 \to {\mathbb{R}} ^2$ is a
fixed point free orientation preserving homeomorphism, then every point is
wandering.
\end{thm}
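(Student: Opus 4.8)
The statement is the wandering corollary of Brouwer's plane translation theorem, and the plan is to obtain it from the classical theory of \emph{translation arcs}. Call a point $x\in\R^{2}$ \emph{free} if there is an open disk $V\ni x$ with $F(V)\cap V=\emptyset$; since $\fix(F)=\emptyset$ and $F$ is continuous we have $F(x)\neq x$, so disjoint neighbourhoods of $x$ and $F(x)$ produce such a $V$, and hence \emph{every} point is free. A \emph{translation arc} at $x$ is an arc $\alpha$ from $x$ to $F(x)$, contained in a single free disk, with $\alpha\cap F(\alpha)=\{F(x)\}$. I would first check that every point $x$ admits a translation arc: join $x$ to $F(x)$ by a chain of finitely many overlapping free disks to get \emph{some} arc $\beta$ from $x$ to $F(x)$, and then delete the finitely many points of $\beta\cap F(\beta)$ one by one by an innermost-subarc cut-and-paste performed inside the free disks, where the relation $V\cap F(V)=\emptyset$ prevents any new interference from being created.

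The heart of the argument is Brouwer's lemma on translation arcs: if $\alpha$ is a translation arc for the orientation-preserving fixed-point-free homeomorphism $F$, then $F^{n}(\alpha)\cap\alpha=\emptyset$ for every integer $n$ with $|n|\geq 2$. I would prove this by contradiction. A point of $F^{n}(\alpha)\cap\alpha$ with $n\geq 2$ allows one, after reducing to an innermost such intersection, to splice subarcs of $\alpha,F(\alpha),\dots,F^{n}(\alpha)$ into a Jordan curve bounding a disk $D$ with $F^{n}(D)\subset D$; the Brouwer fixed point theorem then produces a periodic point of $F$, and a periodic orbit of $F$ forces a genuine fixed point (apply the Brouwer fixed point theorem to a disk engulfing the orbit, or use an index computation), contradicting the hypothesis. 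Making this planar combinatorics precise — tracking exactly how the iterates $F^{k}(\alpha)$ can cross one another and extracting the $F^{n}$-invariant disk — is the step I expect to be the main obstacle, and it is precisely here that orientation preservation is used.

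Granting the lemma, the conclusion follows by the standard machinery of Brouwer's plane translation theorem. The iterates $F^{n}(\alpha)$, $n\in\Z$, meet only in the common endpoints $F^{n}(x)$, so $\Lambda:=\bigcup_{n\in\Z}F^{n}(\alpha)$ is an injectively embedded line along which $F$ shifts by one arc, and a further argument (again using $\fix(F)=\emptyset$ together with planar separation and the Jordan curve theorem) shows $\Lambda$ is properly embedded. Organizing $\Lambda$ and its complement into coordinates one obtains, through the chosen point $x$, a \emph{translation domain}: an $F$-invariant open set $W\ni x$ with a homeomorphism $h\colon W\to\R^{2}$ conjugating $F|_{W}$ to the unit translation $(u,v)\mapsto(u+1,v)$. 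On $W$ every point is visibly wandering, since if $h(y)=(u_{0},v_{0})$ then $U:=h^{-1}(\{\,|u-u_{0}|<1/2\,\})$ satisfies $F^{n}(U)\cap U=\emptyset$ for all $n\geq 1$. As $x$ was arbitrary, every point of $\R^{2}$ is wandering, i.e. $F$ has empty non-wandering set, which is the assertion.
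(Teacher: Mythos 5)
The paper does not prove this statement at all: it is Brouwer's classical plane translation theorem (in its ``every point is wandering'' form), quoted from \cite{brou} with a pointer to \cite{patrice} for a modern exposition. So there is no in-paper argument to compare against; what you have written is an outline of the classical translation-arc proof, and it has to be judged on its own terms. As such it contains one outright error and leaves the two genuinely hard steps unproved. The error is in your definition of a translation arc: you require $\alpha$ to run from $x$ to $F(x)$ \emph{and} to be contained in a single free disk $V$. But if $V$ is free, i.e.\ $F(V)\cap V=\emptyset$, and $x\in V$, then $F(x)\in F(V)$ forces $F(x)\notin V$, so no arc from $x$ to $F(x)$ can lie in $V$: your notion of translation arc is vacuous. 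The standard definition drops the containment condition and asks only that $\alpha\cap F(\alpha)=\{F(x)\}$ (existence is then proved by the free-disk-chain surgery you describe, which is fine in spirit).

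Beyond that, the substance of the theorem is concentrated in exactly the two places you defer: the Brouwer translation-arc lemma ($F^{n}(\alpha)\cap\alpha=\emptyset$ for $|n|\geq 2$), whose planar combinatorics you acknowledge as ``the main obstacle'' without carrying it out, and the passage from the lemma to a properly embedded Brouwer line and a translation domain, which you cover with ``a further argument.'' That second step is not routine: the union $\bigcup_{n}F^{n}(\alpha)$, while injectively immersed, need not be closed, and making it into a translation domain is essentially the full strength of the plane translation theorem. It is also more machinery than the statement requires: the wandering conclusion follows more economically from Franks' free disk chain lemma (itself proved with translation arcs), since a non-wandering point $x$ makes any free disk around $x$ a periodic free disk chain of length one and hence yields a fixed point. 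In short, the architecture you describe is the correct classical one, but as written the proposal is not a proof; it is a roadmap with a faulty definition at the start and the two load-bearing lemmas left unestablished.
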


For a modern exposition of this theorem in its maximum expression, see \cite{patrice}.

Although this technique is quite useful in the case that the map $f$ is isotopic to the identity, recurrence in the lift for maps of degree $d$, $|d|>1$ is not so easy to get.  Indeed, the
lifted map $F$ satisfies $F (x+1, y) = F (x,y) + d$ and so every point wants to escape to infinity exponentially fast.  Of course we may impose some strong hypothesis implying immediately
recurrence for the lift:

\begin{lemma}\label{sintasa}  If $f: A\to A$ is an orientation preserving covering map of degree $d\neq 0$ preserving an inessential continuum $K\subset A$, then $\fix (f) \neq\emptyset$.
\end{lemma}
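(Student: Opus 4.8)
The plan is to lift $f$ to the universal cover $\tilde A$ and to exploit the fact that an inessential $K$ becomes, after lifting, ``trapped'' in a fundamental domain, so that the usual obstruction to recurrence for a lift of a degree-$d$ map (every orbit escaping to infinity) disappears. Concretely: since $K$ is inessential it is contained in a disk $D\subset A$; the components of $\pi^{-1}(D)$ are pairwise disjoint open disks $D_n=D_0+n$, $n\in\Z$, each carried homeomorphically onto $D$ by $\pi$, so that $\pi^{-1}(K)=\bigsqcup_{n\in\Z}\tilde K_n$ with $\tilde K_n=\tilde K_0+n$ a compact connected homeomorphic copy of $K$. Pick any lift $F$ of $f$. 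Because $f$ is a covering map and $\tilde A$ is simply connected, $F$ is a homeomorphism of $\tilde A$, orientation preserving since $f$ is. Now $\pi(F(\tilde K_0))=f(\pi(\tilde K_0))=f(K)\subset K$, hence $F(\tilde K_0)\subset\pi^{-1}(K)$, and since $\tilde K_0$ is connected its image lies in a single sheet $\tilde K_m$. Replacing $F$ by the lift $x\mapsto F(x)-(m,0)$ (again an orientation-preserving homeomorphism of $\tilde A$) we may assume $F(\tilde K_0)\subset\tilde K_0$.

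At this point $\tilde K_0$ is a compact set that is forward invariant under the orientation-preserving homeomorphism $F$. Fix $x_0\in\tilde K_0$; its entire forward orbit stays in $\tilde K_0$, so the $\omega$-limit set $\omega(x_0)$ is a nonempty subset of $\tilde K_0$, and each $y\in\omega(x_0)$ is non-wandering for $F$: choosing $n_1<n_2<\cdots$ with $F^{n_k}(x_0)\to y$, for any neighbourhood $U$ of $y$ and $k$ large both $F^{n_k}(x_0)$ and $F^{n_{k+1}}(x_0)$ lie in $U$, whence $F^{\,n_{k+1}-n_k}(U)\cap U\neq\emptyset$ with $n_{k+1}-n_k\ge 1$. (Equivalently, $\bigcap_{n\ge 0}F^n(\tilde K_0)$ is a nonempty compact $F$-invariant continuum, on which $F$ necessarily has a non-wandering point.) Thus $F$ is not fixed-point free: identifying $\tilde A\cong\R^2$ by an orientation-preserving homeomorphism and applying the contrapositive of Brouwer's plane translation theorem (Theorem \ref{brouwer}) -- a fixed-point-free orientation-preserving plane homeomorphism has only wandering points -- we get $\fix(F)\neq\emptyset$. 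Finally, if $F(p)=p$ then $f(\pi(p))=\pi(F(p))=\pi(p)$, so $\fix(f)\neq\emptyset$.

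The only substantive point is the first paragraph: verifying that lifts of the covering map $f$ are homeomorphisms of $\tilde A$, that inessentiality splits $\pi^{-1}(K)$ into compact ``unit-sized'' pieces, and that consequently some lift $F$ possesses a compact forward-invariant set. Once that is in place the conclusion is immediate; the hypothesis $d\neq 0$ (indeed the value of $d$ at all) is used only to know that $F$ is a homeomorphism, and nothing here requires $|d|>1$.
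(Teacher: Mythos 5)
Your proof is correct and follows essentially the same route as the paper's own (one-sentence) argument: exhibit a lift of $f$ that preserves a compact connected component of $\pi^{-1}(K)$, note that points of such a set are non-wandering, and conclude via Brouwer's Theorem \ref{brouwer}. You have merely filled in the details that the paper leaves implicit.
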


A continuum is {\it inessential} if it is contained in a disk of $A$.  The proof is immediate from Brouwer's Theorem \ref{brouwer}, as the hypothesis implies that there exists a lift of $f$ that
preserves a compact subset of the plane (namely, a connected component of the preimage of $K$ by the covering projection).

However, if $K$ is essential, no connected component of its lift to the
universal covering space is compact. The proof of Theorem \ref{per} in the orientation preserving case is based on a simple (though key) observation that was already made in \cite{iprx}. If
$f: A \to A$ is a continuous map of degree $d$, $|d|>1$ and $K\subset A$ is a
compact set such that $f(K)\subset K$, then $f|_K$ is semiconjugate to the restriction of $m_d$ to an invariant subset. Existence and properties of the
semiconjugacy $h: K \to S^1$ are contained in Lemma \ref{semi}.  Using Brouwer's Theorem, existence of fixed points is proved in Lemma \ref{fijo} if $h^{-1} (1)\neq \emptyset$, as this
guarantees existence of a compact invariant set for the lift. We prove completeness of $f$ using standard Nielsen theory  if $K$ is essential (note  Lemma \ref{sintasa} only gives a fixed
point, not completeness); this is done in Lemmas \ref{d-1} and \ref{sur}.

If $f$ reverses orientation, we use Kuperberg's theorem in \cite{krys} to find fixed points for orientation reversing plane homeomorphisms.

\begin{thm}\label{kryst}\cite{krys} Let $f$ be an orientation reversing homeomorphism of the plane, and $X$ a continuum of the plane invariant under $f$.  Then, $f$ has at least one fixed point
in $X$.

\end{thm}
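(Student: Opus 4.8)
Theorem \ref{kryst} is Kuperberg's theorem and in the article it is simply quoted from \cite{krys}; here is how I would prove the statement directly. Write $h$ for the orientation reversing plane homeomorphism and $X$ for the invariant continuum, and argue by contradiction, assuming $\fix(h)\cap X=\emptyset$. The plan is to reduce to the case of a minimal invariant continuum, then dispatch the nonseparating case with Bell's theorem, and finally attack the separating case by a prime end argument. For the reduction, order the nonempty compact connected $h$-invariant subsets of $X$ by reverse inclusion; the intersection of a nested family of such sets is again nonempty, compact, connected and $h$-invariant, so Zorn's lemma yields a minimal one, and we replace $X$ by it --- from now on every $h$-invariant proper subcontinuum of $X$ is empty. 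If this minimal $X$ does not separate the plane, then it is a nonseparating invariant continuum of an orientation reversing homeomorphism, and Bell's theorem \cite{bell} --- the orientation reversing form of the Cartwright--Littlewood theorem \cite{cl} --- produces a fixed point of $h$ in $X$, a contradiction.

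So suppose the minimal $X$ separates the plane. Compactify $\R^2$ to $S^2$, so that $h$ extends to a homeomorphism of $S^2$ fixing $\infty$, and let $U$ be the component of $S^2\setminus X$ containing $\infty$; then $U$ is $h$-invariant, $S^2\setminus U$ is connected so $U$ is simply connected, and its frontier is an $h$-invariant subcontinuum of $X$, hence equals $X$ by minimality. Pass to the prime end (Carath\'eodory) compactification $\hat U\cong\overline{\D}$: the homeomorphism $h|_U$ extends to $\hat U$, and since $h$ reverses orientation the induced circle homeomorphism $\bar h$ of the prime end circle reverses orientation, so $\bar h$ has at least two fixed prime ends. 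The essential step is then to turn a fixed prime end into a fixed point of $h$ lying in $X=\partial U$. The impression and the principal set of a fixed prime end are $h$-invariant subcontinua of $X$, hence all of $X$ by minimality, so one cannot read off a fixed point directly; instead one argues with accessible points, in the spirit of Brown's proof \cite{brown} of the Cartwright--Littlewood theorem and using Brouwer's Theorem \ref{brouwer} on the induced local dynamics, to produce an accessible point of $X$ fixed by $h$, contradicting the assumption.

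The main obstacle is this last step. A fixed prime end need not contain a fixed point, which is exactly why the Cartwright--Littlewood theorem genuinely fails for separating continua in the orientation \emph{preserving} category: a rigid rotation of a round annulus leaves it invariant with no fixed point in it. Removing the nonseparating hypothesis therefore cannot be purely topological and must use the orientation reversing assumption in an essential way, to force enough rigidity of $\bar h$ at its fixed prime ends that an accessible fixed point of $h$ in $X$ must appear. In carrying this out I would treat Bell's theorem and the standard prime end machinery as black boxes, and concentrate the real work on this prime end bookkeeping in the separating case.
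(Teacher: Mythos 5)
This statement is not proved in the paper at all: it is Kuperberg's theorem, quoted as a black box from \cite{krys}, so there is no internal argument to compare yours against. Judged on its own terms, your proposal has a genuine gap, and you say so yourself. The reduction to a minimal invariant continuum via Zorn's lemma is fine (nested intersections of invariant continua are invariant continua), and the nonseparating case is correctly discharged by Bell's theorem. In the separating case the setup is also correct: the unbounded complementary domain $U$ is invariant, its frontier is an invariant subcontinuum hence equals $X$ by minimality, the prime end extension of $h|_U$ reverses orientation on the circle of prime ends and therefore has at least two fixed prime ends. But the passage from a fixed prime end to a fixed point of $h$ in $X$ is exactly the content of Kuperberg's theorem beyond Bell's, and you leave it as an acknowledged ``obstacle.'' Your own observation kills the naive approach: by minimality the impression and principal set of every fixed prime end are all of $X$, so no localization is available, and the appeal to ``accessible points in the spirit of Brown's proof'' does not transfer, since Brown's argument applies the Brouwer translation theorem to $h^2$ on a complementary domain and leans on the nonseparating hypothesis in an essential way.

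For the record, Kuperberg's actual route is different from the one you sketch: rather than chasing fixed prime ends on the outer domain of a minimal $X$, she works with the filled set (which is a nonseparating invariant continuum, so Bell's theorem already gives a fixed point there) and then uses an index/variation argument, exploiting the structure of the fixed point set of an orientation reversing plane homeomorphism, to show the fixed point can be taken on $X$ itself. So the missing step in your outline is not bookkeeping; it is the theorem. As written, the proposal is an honest plan of attack, not a proof.
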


The following lemma, which is esentially the Shadowing Lemma for expanding maps is key for the purposes of this paper. See lemma 1 and lemma 2 in \cite{iprx}.

\begin{lemma}
\label{semi} Let $f:A\to A$ be a continuous map of degree $d$, $|d|>1$, and let $
F:\tilde A\to \tilde A$ be a lift of $f$ . Let $K$ be a compact $f$%
-invariant ($f(K)\subset K$) subset of the annulus, and $\tilde K=\pi^{-1}(K)
$. Then there exists a continuous map $H_F:\tilde K\to {\mathbb{R}}$ such
that:

\begin{enumerate}
\item \label{s1}$H_F(x+1,y)=H_F(x,y)+1$,

\item $H_FF=dH_F$,

\item $|H_F(x,y)-x|$ is bounded on $\tilde K$,

\item $H_F(x)=\lim_{n\to \infty} \frac{(F^{n}(x))_1}{d^{n}}$, where $()_1$ denotes projection over the first coordinate.

\end{enumerate}
\end{lemma}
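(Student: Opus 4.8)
The plan is to construct $H_F$ directly as the limit appearing in property (\ref{s1})'s companion, i.e. $H_F(x)=\lim_n (F^n(x))_1/d^n$, and then to read off properties (1)--(3) from the estimate that proves the limit exists. First I would record that $F$ maps $\tilde K$ into itself: since $\pi F=f\pi$ and $f(K)\subset K$, we have $\pi(F(\tilde K))=f(K)\subset K$, hence $F(\tilde K)\subset\pi^{-1}(K)=\tilde K$, and therefore $F^n(x)\in\tilde K$ for every $x\in\tilde K$ and every $n\geq 0$.

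The key point is a uniform estimate. Define $g:\tilde A\to\R$ by $g(x,y)=(F(x,y))_1-dx$. From $F(x+1,y)=F(x,y)+(d,0)$ one checks immediately that $g(x+1,y)=g(x,y)$, so $g$ descends to a continuous function on $A$; since $K$ is compact, there is a constant $C>0$ with $|g|\leq C$ on $\tilde K$. Now for $x\in\tilde K$ and $n\geq 0$,
$$
\left|\frac{(F^{n+1}(x))_1}{d^{n+1}}-\frac{(F^{n}(x))_1}{d^{n}}\right|=\frac{\bigl|g(F^n(x))\bigr|}{|d|^{n+1}}\leq\frac{C}{|d|^{n+1}}.
$$
Because $|d|>1$, the series $\sum_{n\geq 0}C/|d|^{n+1}$ converges, so the sequence $n\mapsto (F^n(x))_1/d^n$ is Cauchy, and the bound is uniform in $x\in\tilde K$. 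Hence it converges uniformly; we let $H_F(x)$ denote the limit, which is continuous as a uniform limit of the continuous maps $x\mapsto (F^n(x))_1/d^n$, and property (4) holds by construction.

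It then remains to verify (1)--(3). Summing the telescoping estimate from $n=0$ gives $|H_F(x)-x|\leq C/(|d|-1)$, which is (3). Since $F^n(x+1,y)=F^n(x,y)+(d^n,0)$, we get $H_F(x+1,y)=\lim_n\bigl((F^n(x,y))_1+d^n\bigr)/d^n=H_F(x,y)+1$, which is (1). Finally $H_F(F(x))=\lim_n (F^{n+1}(x))_1/d^n=d\,\lim_n (F^{n+1}(x))_1/d^{n+1}=d\,H_F(x)$, which is (2).

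I do not expect a serious obstacle here: the argument is the standard ``telescoping plus geometric series'' proof of shadowing for expanding maps. The only points that require a little care are that the bound on $g$ be genuinely uniform over all of $\tilde K$ (this is exactly where compactness of $K$ combines with the periodicity $g(x+1,y)=g(x,y)$) and that the sign of $d$ be handled correctly when $d<0$, which is automatic once every estimate is written in terms of $|d|$ rather than $d$.
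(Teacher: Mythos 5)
Your proof is correct and is essentially the argument the paper has in mind: the paper defers the details to Lemmas 1 and 2 of \cite{iprx} and remarks that $H_F$ is the fixed point of the contraction $H\mapsto \frac{1}{d}H\circ F$ on the space of continuous functions satisfying item (1), and your sequence $(F^n(x))_1/d^n$ is exactly the orbit of $H_0(x,y)=x$ under that contraction, with your telescoping bound playing the role of the contraction estimate. Nothing further is needed.
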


The function $H_F$ appears as a fixed point of the contracting operator $H\to \frac{1}{d}HF$ acting on the space of continuous functions $H:\tilde K\to\R$ that satisfy item (1).

Let $h$ be the quotient function of $H_F$. It is well defined because of item (\ref{s1}) in the previous lemma.

The previous lemma gives:

\begin{clly}
\label{periodicos} Let $f:A\to A$ be a continuous map of degree $d$, $|d|>1$, and  $K\subset A$ be  compact and forward invariant. Then the function $h:A\to S^1$, projection of $H_F$, is a semiconjugacy from the restriction of
$f$ to $K$ to the restriction of $m_d$ to an invariant subset.
\end{clly}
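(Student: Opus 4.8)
I would prove this as a short diagram chase; the whole content already sits inside Lemma \ref{semi}. Fix a lift $F$ of $f$, put $\tilde K=\pi^{-1}(K)$, and let $H_F:\tilde K\to\R$ be the map given by that lemma. Identify $S^1$ with $\R/\Z$ through $t\mapsto e^{2\pi i t}$, so that $m_d$ becomes $[t]\mapsto[dt]$, and write $q:\R\to\R/\Z$ for the projection. The plan has three steps: (a) $H_F$ descends to a well-defined continuous $h:K\to S^1$; (b) $h\circ f=m_d\circ h$ on $K$; (c) $h(K)$ is a compact $m_d$-invariant set, so that $h$ is a semiconjugacy onto it.

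For (a): item (1) of Lemma \ref{semi} gives $H_F(x+1,y)=H_F(x,y)+1$, so $q\circ H_F:\tilde K\to S^1$ is invariant under the deck transformation $(x,y)\mapsto(x+1,y)$; since $\pi|_{\tilde K}:\tilde K\to K$ is precisely the quotient by that $\Z$-action, there is a unique $h:K\to S^1$ with $h\circ(\pi|_{\tilde K})=q\circ H_F$, and $h$ is continuous because $q\circ H_F$ is and $\pi|_{\tilde K}$ is a quotient map. For (b): given $x\in K$, choose $\tilde x\in\tilde K$ over $x$; then $F(\tilde x)\in\tilde K$ (because $f(K)\subset K$) and $\pi(F(\tilde x))=f(x)$, so by item (2),
$$h(f(x))=q\bigl(H_F(F(\tilde x))\bigr)=q\bigl(d\,H_F(\tilde x)\bigr)=m_d\bigl(q(H_F(\tilde x))\bigr)=m_d(h(x)).$$
For (c): this identity gives $m_d(h(K))=h(f(K))\subset h(K)$, and $h(K)$ is compact since $K$ is and $h$ is continuous; hence $m_d$ restricts to a self-map of $h(K)$, and $h:K\to h(K)$ is the asserted surjective semiconjugacy.

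I do not expect a real obstacle here: every ingredient is furnished by Lemma \ref{semi}, and the only step requiring a moment of attention is the identification of $m_d$ on $S^1$ with multiplication by $d$ on $\R/\Z$, which is exactly what lets item (2) project to the intertwining relation; the boundedness and limit formula in items (3)--(4) of Lemma \ref{semi} play no role in this corollary, being needed only for the later, more delicate uses of $h$.
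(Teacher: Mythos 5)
Your proof is correct and follows exactly the route the paper intends: the paper gives no explicit proof, merely asserting that the corollary follows from Lemma \ref{semi}, with well-definedness of $h$ coming from item (1) and the intertwining relation from item (2), which is precisely your steps (a) and (b). Your step (c) and the remark that items (3)--(4) are not needed here are accurate fillings-in of routine details the paper leaves implicit.
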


\begin{rk}  Note that we have not yet assumed that $f$ is a covering and thus Lemma \ref{semi} and Corollary \ref{periodicos} are valid for continuous maps of degree $d$, $|d|>1$.
\end{rk}

There is still one preliminary result needed in the proof of the theorem.

\begin{lemma}
\label{extend}
Let $g$ be a covering map of the open annulus $A$, and $K$ a compact subset of $A$. Then there exists a covering $g'$ of the closed annulus such that $g'=g$ on $K$.
\end{lemma}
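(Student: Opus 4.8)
The plan is to leave $g$ unchanged on a compact essential annulus $W$ containing $K$, and to replace $g$ on the two end-collars of $A$ by model coverings of compact annuli that agree with $g$ along $\partial W$.

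First I would reduce to a convenient $K$. Since $K$ is compact in $S^1\times(0,1)$, it lies in $S^1\times[c,d]$ for some $0<c<d<1$, and any covering $g'$ with $g'=g$ on $S^1\times[c,d]$ also has $g'=g$ on $K$; so I may assume $K=S^1\times[c,d]$. Choose $0<a<c<d<b<1$ with $g(K)\subset S^1\times(a,b)$, put $D:=S^1\times[a,b]$, and let $W$ be the connected component of $g^{-1}(D)$ that contains $K$ --- there is only one, as $K$ is connected. The crucial claim is that $W$ is a compact essential sub-annulus of $A$ with $K\subset\operatorname{int}(W)$, and that $g|_W\colon W\to D$ is a degree $d$ covering carrying each of the two boundary circles of $W$ onto one of the two boundary circles of $D$.

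Granting the claim, write $A\setminus\operatorname{int}(W)=E^-\sqcup E^+$ (exactly two pieces, since $W$ is essential) and $\overline{A}\setminus D=B^-\sqcup B^+$; passing to closures in $\overline{A}$ gives two decompositions of $\overline{A}$ into three compact annuli glued along circles. I would set $g'=g|_W$ on $W$, and on each closed collar $\overline{E^{\pm}}$ let $g'$ be a covering onto the appropriate $\overline{B^{\pm}}$ --- the one whose inner boundary circle is $g(\partial^{\pm}W)$ --- restricting to $g|_{\partial^{\pm}W}$ on the common circle. Such an extension exists because a $|d|$-sheeted covering $\phi\colon S^1\to S^1$ always extends to a $|d|$-sheeted covering of the compact annulus $S^1\times[0,1]$ restricting to $\phi$ on one boundary circle: lift $\phi$ to $\R$ and join that lift to a linear map through strictly monotone maps $\R\to\R$ having the same integer shift under $x\mapsto x+1$ (such maps form a convex set), and let this path $\psi_t$ of circle coverings give $(z,t)\mapsto(\psi_t(z),t)$. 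The three pieces agree on the gluing circles, so $g'$ is continuous; it is a local homeomorphism --- near a point of $\partial W$ the two adjacent pieces map homeomorphically onto the two sides of $\partial D$ --- and it is proper since $\overline{A}$ is compact, hence a covering map of $\overline{A}$; and $g'=g$ on $K\subset\operatorname{int}(W)$.

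The main obstacle is the claim about $W$. I would argue that the restriction of a covering over a subspace is a covering, so $g^{-1}(D)\to D$ is a covering; the only finite connected coverings of an annulus are annuli (the nontrivial subgroups of $\pi_1(D)\cong\Z$ are the subgroups $n\Z$) and $A$ is orientable, so every component of $g^{-1}(D)$ is a compact annulus whose boundary maps onto $\partial D$ by a covering of circles. From $g(K)\subset\operatorname{int}(D)$ we get $K\subset\operatorname{int}\bigl(g^{-1}(D)\bigr)$, hence $K\subset\operatorname{int}(W)$; and $W$ is essential because $K$ contains the essential circle $S^1\times\{(c+d)/2\}$, so $A\setminus\operatorname{int}(W)$ has exactly two components. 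The covering $g|_W$ cannot send both boundary circles of $W$ to the same boundary circle of $D$, since then the other circle of $D$ would have empty preimage, contradicting surjectivity; this determines which collar maps to which above. Finally the core of $W$ is isotopic in $A$ to a generator of $H_1(A)$, on which $g_*$ acts as multiplication by $d$, so $g|_W$ --- and hence the covering $g'$ --- has degree exactly $d$.
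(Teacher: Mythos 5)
Your proof is correct, but it takes a genuinely different route from the paper's. The paper works upstairs in the universal cover: it keeps a lift $G$ unchanged on a middle strip $V_\epsilon=\{\epsilon<y<1-\epsilon\}$ containing $\pi^{-1}(K)$, prescribes $G'(x,y)=(dx,y)$ on $y=0$ and $y=1$, defines $G'$ by hand on the boundary of the fundamental rectangle $[0,1]\times[0,\epsilon]$ (choosing a simple arc for one vertical side and imposing $G'(x+1,y)=G'(x,y)+(d,0)$ for the other), fills in the rectangle by the Jordan--Schoenflies theorem, spreads the definition equivariantly over the two boundary strips, and projects. You instead stay downstairs and use covering-space theory: since $g$ is a finite-sheeted covering, the component $W$ of $g^{-1}(D)$ containing $K$ is a compact essential sub-annulus on which $g$ is already a $|d|$-fold covering onto $D$, and you cap off the two complementary collars with explicit model coverings built from a convex path of equivariant monotone maps of $\R$. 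Your route trades Schoenflies and the fundamental-domain bookkeeping for the (standard) classification of coverings of a compact annulus; both arguments yield a $g'$ of the same degree $d$ that agrees with $g$ on a whole compact essential sub-annulus containing $K$, which is what is actually used later, e.g.\ in Lemma \ref{reversing}. Two places you should tighten: compactness of the components of $g^{-1}(D)$ is best justified by properness of $g$ (a covering of the annulus of degree $d$, $d\neq 0$, has $|d|$ sheets, hence is proper), rather than by the classification of \emph{finite} coverings alone; and the level-preserving map $(z,t)\mapsto(\psi_t(z),t)$ is most cleanly seen to be a covering by noting that its lift $(x,t)\mapsto(\Psi_t(x),t)$ is a proper continuous bijection of $\R\times[0,1]$, hence a homeomorphism, which then descends to a $|d|$-fold covering.
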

\begin{proof}
The proof is given in the case that $g$ fixes the ends of $A$, in the other case the proof is analogous.
Let $G$ be a lift of $g$ and let $V_\epsilon=\{(x,y)\in\R\times (0,1): \epsilon<y< 1-\epsilon\}$ be a neighborhood of
$\tilde K=\pi^{-1}(K)$. It is claimed that there exists a homeomorphism $G'$ of $\R\times [0,1]$ satisfying $G'(x,y)=(dx,y)$ in $y=0$ and $y=1$, $G'=G$ on $V_\epsilon$ and $G'(x+1,y)=G'(x,y)+(d,0)$ for every $(x,y)$, where $d$ is the degree of $g$.

To this end, let $R$ be the rectangle $0\leq x\leq 1$, $0\leq y\leq \epsilon$. Note that the above requirements already define $G'$ on the horizontal sides of $R$. Choose a simple arc $s$ in $A$ joining $G'(0,0)$ to $G'(0,\epsilon)$ and disjoint from $G(y=\epsilon)$. Next, define $G'$ on the segment $x=0$, $0\leq y\leq \epsilon$ as a homeomorphism onto $s$. Then define $G'$ on $x=1$, $0\leq y\leq \epsilon$ so as to satisfy the condition $G'(1,y)=G'(0,y)+(d,0)$. Until now, a map $G'$ was defined on the boundary of $R$ and is a homeomorphism from the boundary of $R$ to a simple closed curve $\alpha$. By Jordan-Schoenflies theorem, $G'$ can be extended as a homeomorphism from $R$ to the closure of the bounded component of the complement of $\alpha$. Once $G'$ was defined in $R$ extend it to the whole horizontal strip $0\leq y\leq \epsilon$ so as to satisfy
$G'(x+1,y)=G'(x,y)+(d,0)$.

Repeat the construction in the horizontal strip between $y=1-\epsilon$ and $y=1$.

The map $G'$ obtained is a homeomorphism from the closure of $\tilde A$ onto itself and satisfies the claim. Then project $G'$ to the annulus, giving the required map $g'$.
\end{proof}

For the remainder of this section, we assume that $f$ is a covering map and that $K\subset A$ is a compact subset such that $%
f(K)\subset K$. If
$F:\tilde A\to \tilde A$ is any lift of $f$, $H_F$ is the map given by Lemma
\ref{semi}. Note that $H_F\neq H_{F'}$ if $F$ and $F'$ are different lifts of $f$. If there is no place to confusion, we will write $H$ instead of $%
H_F$.

The proof of Theorem \ref{per} will be divided in two cases.

\subsection{The orientation preserving case.}

\begin{lemma}
\label{fijo} If $f$ preserves orientation, and there exists
$F:\tilde A\to \tilde A$ a lift of $f$ such that $H ^{-1} (0)\neq \emptyset
$, then $\fix(F)\neq \emptyset$ (and so $\fix(f)\neq \emptyset$ ).
\end{lemma}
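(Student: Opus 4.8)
The plan is to extract from the hypothesis a nonempty compact $F$-invariant subset of the plane and then quote Brouwer's Theorem \ref{brouwer}. First I note that, since $f$ is an orientation preserving covering map, the lift $F$ is an orientation preserving homeomorphism of $\tilde A=\R\times(0,1)\cong\R^2$ (as recalled in the discussion preceding this lemma; a lift of a covering map to the universal cover is automatically a homeomorphism).

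The set I would work with is $\Lambda:=H^{-1}(0)\subseteq\tilde K$, which is nonempty by hypothesis and closed by continuity of $H$. I would check that $\Lambda$ is in fact compact, combining two bounds: because $K$ is a compact subset of the \emph{open} annulus, $\tilde K=\pi^{-1}(K)$ lies in some horizontal strip $\R\times[\varepsilon,1-\varepsilon]$; and by item (3) of Lemma \ref{semi} there is a constant $C>0$ with $|H(x,y)-x|\le C$ on $\tilde K$, so $|x|=|x-H(x,y)|\le C$ whenever $(x,y)\in\Lambda$. Hence $\Lambda\subseteq[-C,C]\times[\varepsilon,1-\varepsilon]$ is closed and bounded in $\R^2$, thus compact. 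Forward invariance is then immediate from item (2): $H\circ F=dH$ forces $H(F(z))=0$ for every $z\in\Lambda$, i.e.\ $F(\Lambda)\subseteq\Lambda$.

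To conclude, suppose for contradiction that $\fix(F)=\emptyset$. Then $F$ is a fixed point free orientation preserving plane homeomorphism, so by Theorem \ref{brouwer} every point of $\R^2$ is wandering. But any $x\in\Lambda$ has its whole forward orbit contained in the compact set $\Lambda$, so its $\omega$-limit set $\omega(x)$ is nonempty, compact and $F$-invariant; picking (via Zorn's Lemma) a minimal nonempty closed $F$-invariant set $M\subseteq\omega(x)$, each $y\in M$ satisfies $\omega(y)=M\ni y$ and is therefore non-wandering, contradicting Brouwer's theorem. Hence $\fix(F)\neq\emptyset$, and since $\pi(\fix(F))\subseteq\fix(f)$ we also obtain $\fix(f)\neq\emptyset$.

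The only steps requiring genuine care are the compactness of $\Lambda$ — which is where the a priori estimate of item (3) of Lemma \ref{semi} gets used together with the fact that compact subsets of $A$ lift into a strip — and the (standard, but worth stating) observation that a Brouwer homeomorphism can admit no nonempty compact forward-invariant set; the rest is routine.
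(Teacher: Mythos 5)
Your argument is correct and is essentially the paper's own proof: both reduce the statement to exhibiting a nonempty compact $F$-invariant set, take that set to be $H^{-1}(0)$, get invariance from $H\circ F=dH$ and compactness from the bound $|H(x,y)-x|\le C$ on $\tilde K$, and then invoke Brouwer's Theorem \ref{brouwer}. The only difference is that you spell out two details the paper leaves implicit (that $\tilde K$ sits in a closed strip, and that a fixed point free Brouwer homeomorphism admits no nonempty compact forward-invariant set --- for the latter, note that any $\omega$-limit point is already non-wandering, so the minimal-set detour is not needed).
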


\begin{proof}
Note that as $f: A \to A$ is a covering, $F:\tilde A\to \tilde A$ is a
homeomorphism. Moreover, $\tilde A$ is homeomorphic to ${\mathbb{R}}^2$ and $F$ preserves orientation because $f$ does. So, by Brouwer's Theorem \ref{brouwer} it is enough to prove that $H ^{-1} (0)$ is a compact $F$-invariant set. Invariance follows from the equality $HF=dH$ (Lemma \ref{semi}, item (2)). To see it is compact, recall from Lemma \ref{semi} that the function $(x,y)\to H(x,y) - x$ defined on $\tilde K$ is bounded. So, we may take $C\in {\mathbb{R}}$ such that $|H(x,y)-x|< C$ on $\tilde K$. Then, $(x,y)\in H ^{-1} (0)$ implies $x\in [-C,C]$,
proving that $H ^{-1} (0)$ is compact.
\end{proof}

\begin{rk}
The fixed point found in the previous lemma does not necessarily belong to $K$ (see Example \ref{e1} in Section \ref{ex}).
\end{rk}

The following remark resembles rotation theory for surface homeomorphisms.

\begin{rk}
\label{rot0} The previous lemma can be restated as follows: if there exists $x\in K$, and a lift $F$ of $f$ such that
$\lim _{n\to \infty} \frac{(F^n(\tilde x))_1}{d^n} = 0$ for a lift $\tilde x$ of $x$, then $\fix(F)\neq\emptyset$ (see Lemma \ref{semi} item 4 and Lemma \ref{fijo}).
\end{rk}

Note, however, that the mere existence of a point $\tilde x \in \tilde A$ such that $\lim _{n\to \infty} \frac{(F^n
(\tilde x))_1}{d^n} = 0$ for some lift $F$ of $f$ does not imply the existence of a fixed point; the set $K$ is key. An example is given in  Example \ref{e2} in Section \ref{ex}.

The following lemma is Lemma 3 in \cite{iprx}.

\begin{lemma}
\label{sur} If $K$ is an essential subset of $A$, then for any lift $F$ of $f
$, the function $H_F: \tilde K \to {\mathbb{R}}$ is surjective.
\end{lemma}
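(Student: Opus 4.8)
\textbf{Proof plan for Lemma \ref{sur}.}

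The plan is to show that the image $H_F(\tilde K)$, which is a subset of $\R$ satisfying the periodicity $H_F(z+1,y)=H_F(z,y)+1$ (Lemma \ref{semi}, item 1), is all of $\R$; by periodicity it suffices to show that the image contains an interval of length at least $1$, and since $H_F(\tilde K)$ is the continuous image of the (possibly disconnected) set $\tilde K$, the real task is to find a \emph{connected} piece of $\tilde K$ whose $H_F$-image has length $\geq 1$. The key input is essentiality of $K$: I would first argue that essentiality of $K$ forces some connected component $\tilde K_0$ of $\tilde K=\pi^{-1}(K)$ to be unbounded in the first coordinate — more precisely, either a single component of $\tilde K$ is invariant under the deck transformation $z\mapsto z+1$ (so its first coordinate is unbounded in both directions), or the deck transformation permutes the components and some component together with its translates ``connects up'' across a fundamental domain. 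In either case one extracts a connected subset $C\subset\tilde K$ and a point $z\in C$ with $z+1$ also lying in the same connected component after possibly passing to a translate, so that $C$ joins $z$ to a $\Z$-translate $z+k$ with $k\neq 0$.

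Next I would use the defining property $H_F F = dH_F$ together with item (4) of Lemma \ref{semi}, $H_F(z)=\lim_n (F^n(z))_1/d^n$, to control $H_F$ along such a connected set. Concretely: suppose for contradiction that $H_F(\tilde K)\subset\R$ omits some point; by periodicity it then omits a whole $\Z$-orbit of points, so $H_F(\tilde K)$ is contained in $\bigcup_{j\in\Z}(j+a,\,j+a+1-\delta)$ for suitable $a$ and some $\delta>0$ — i.e. $H_F(\tilde K)$ misses a periodic family of gaps. Now take the connected set $C\subset\tilde K$ found above joining a point to one of its nontrivial translates; its image $H_F(C)$ is a connected subset of $\R$, i.e. an interval. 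I claim this interval has length $\geq 1$: indeed, if $z\in\tilde K$ and $z+1\in\tilde K$ lie in the same component, then $H_F(z+1)=H_F(z)+1$, and more generally the component $\tilde K_0$ invariant under $z\mapsto z+1$ has $H_F(\tilde K_0)$ invariant under $t\mapsto t+1$ and connected, hence equal to all of $\R$, contradicting the omitted gaps. The only remaining subtlety is the case where no single component is $(z\mapsto z+1)$-invariant; there I would use that some bounded-length chain of components and their translates connects a fundamental-domain worth of ground, so that the union of the corresponding $H_F$-images is a connected set whose total length across one period is $\geq 1$, again filling a gap.

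I expect the main obstacle to be the purely topological step: extracting from essentiality of $K$ the correct statement about connected components of $\tilde K=\pi^{-1}(K)$ and how the deck group $\Z$ acts on them. Essentiality says every neighbourhood $U$ of $K$ in $A$ is essential, i.e. the inclusion induces a surjection $H_1(U;\Z)\to H_1(A;\Z)$; I would translate this into the statement that $\pi^{-1}(U)$ is connected (or at least has a component invariant under the full deck group), take a nested sequence of such neighbourhoods shrinking to $K$, and pass to the limit to locate the required connected subset of $\tilde K$ — or its chain-of-translates surrogate. Once that component (or chain) is in hand, the analytic part is short: $H_F$ is continuous, respects the $\Z$-action by $t\mapsto t+1$, so its image on a deck-invariant connected set is a connected $\Z$-invariant subset of $\R$, hence all of $\R$. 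I would then note that surjectivity on $\tilde K$ already contains the surjectivity of $h$ on $K$ onto $S^1$, which is what is used downstream (in particular it guarantees $H^{-1}(0)\neq\emptyset$ for the lift of Lemma \ref{fijo}).
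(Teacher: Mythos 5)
The paper itself does not prove Lemma \ref{sur}: it is quoted as Lemma 3 of \cite{iprx}, so the only comparison available is with the standard argument there. Your analytic superstructure is correct: if some connected $C\subset\tilde K$ contains a point $\tilde z$ together with $\tilde z+k$, $k\neq 0$, then $H_F(C)$ is an interval containing $H_F(\tilde z)$ and $H_F(\tilde z)+k$, hence of length at least $1$, and periodicity finishes. But the entire weight of the proof then rests on the topological claim that such a $C$ exists, and your sketch of that claim does not go through. The translation ``$U$ essential $\Rightarrow$ $\pi^{-1}(U)$ connected'' is fine for connected open neighbourhoods of a continuum, but the passage to the limit fails: a nested intersection of connected \emph{non-compact} open sets need not be connected, and the paper itself warns (Figure \ref{ka} and the surrounding discussion) that $\pi^{-1}(K)$ can be disconnected even for an essential continuum $K$. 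Local Hausdorff limits of unbounded connected sets need not be connected, the winding numbers $k_j$ of essential loops in shrinking neighbourhoods need not stabilize, and the connecting sets need not stay in a fixed compact region; none of this is addressed. Moreover, the statement you are trying to prove is strictly stronger than the lemma (surjectivity only needs the union of the images of \emph{all} components to be $\R$), and it is far from clear for wild essential compacta such as hereditarily indecomposable cofrontiers, where every proper subcontinuum lies in a disk and hence any component of $\tilde K$ with non-dense projection is automatically disjoint from all its translates.

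The argument that actually works runs in the opposite direction and never mentions components. Suppose $H_F(\tilde K)\neq\R$. Since $|H_F(x,y)-x|$ is bounded on $\tilde K$, preimages of bounded intervals are compact, so $H_F(\tilde K)$ is closed; being nonempty and $1$-periodic, its complement contains the intervals $(a+n,b+n)$, $n\in\Z$, for some $0<b-a\leq 1$. Hence $\tilde K=\bigcup_n V_n$ with $V_n=H_F^{-1}([b+n,a+n+1])=V_0+n$ compact, clopen in $\tilde K$ and pairwise disjoint, and $\pi|_{V_0}$ is injective because $\tilde w$ and $\tilde w+k$ always fall in different $V_n$. Thickening each $V_n$ by half the (positive) minimal distance between distinct $V_n$ gives pairwise disjoint open sets $W_n=W_0+n$ such that $\pi|_{W_0}$ is a homeomorphism onto the open neighbourhood $U=\pi(W_0)\supset K$; every loop in $U$ then lifts to a loop in $\tilde A$, so $i_*H_1(U)=0$ and $K$ is inessential, a contradiction. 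If you want a self-contained proof to submit, this is the route: it uses exactly the homological definition of essentiality adopted in the paper and requires no connectivity of $\tilde K$ whatsoever.
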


\begin{lemma}\label{preserving}  Let $g: A\to A$ be an orientation preserving covering map of degree $d$, $|d|>1$, and let $K\subset A$ be an essential continuum such that $g(K)\subset K$.  Then,
every lift of $g$
has a fixed point.

\end{lemma}

\begin{proof}
Lemma \ref{sur} states that $H_G$ is surjective for any lift $G$ of $g$.  In particular, for any lift $G$ of $g$, $H_G ^{-1} (0)\neq
\emptyset$. Then, Lemma \ref{fijo} implies that $\fix(G)\neq \emptyset$.

\end{proof}

\subsection{The orientation reversing case.}

\begin{lemma}\label{reversing}  Let $g: A\to A$ be an orientation reversing covering map of degree $d$, $|d|>1$, and let $K\subset A$ be an essential continuum such that $g(K)\subset K$.  Then, every lift of
$g$ to the universal
covering $\tilde A$ has a fixed point.
\end{lemma}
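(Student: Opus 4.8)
The plan is to mimic the orientation preserving case (Lemma \ref{preserving}) as closely as possible, replacing Brouwer's theorem with Kuperberg's Theorem \ref{kryst}, and to deal with the fact that an orientation reversing covering map of degree $d$ has lifts $F$ which are orientation reversing plane homeomorphisms. First I would fix an arbitrary lift $G$ of $g$. Since $K$ is essential, Lemma \ref{sur} gives that $H_G:\tilde K\to\R$ is surjective, so $H_G^{-1}(0)\neq\emptyset$; and exactly as in Lemma \ref{fijo}, the bound $|H_G(x,y)-x|<C$ on $\tilde K$ shows that $H_G^{-1}(0)$ is a compact subset of $\tilde A$, while the relation $H_GG=dH_G$ shows it is $G$-invariant. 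The set $\tilde K\cap H_G^{-1}(0)$ is thus a compact $G$-invariant subset of the plane. The only gap is that Kuperberg's theorem requires an invariant \emph{continuum}, and $H_G^{-1}(0)$ (or its intersection with $\tilde K$) need not be connected.

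To close that gap I would pass to the filled set: let $X=\fil\big(\pi^{-1}(K)\cap H_G^{-1}(0)\big)$, or more simply observe that $G$ leaves invariant the compact set $C_0=H_G^{-1}(0)$, replace it by one of its connected components, and take the filling of that. The key point is that a connected component of a compact $G$-invariant set is mapped by $G$ into \emph{some} connected component, and since $G$ is a homeomorphism of the plane and $H_G^{-1}(0)$ is compact, one can select a component $C$ with $G(C)\subseteq C$: indeed since $G$ permutes the finitely-or-not-finitely-many components, one should instead argue via $K$ itself — $K$ is a continuum, $\pi^{-1}(K)$ has connected components each of which is a lift of $K$, and $G$ maps components of $\pi^{-1}(K)$ among themselves. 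Picking the component $\tilde K_0$ of $\pi^{-1}(K)$ on which $H_G$ attains the value $0$ and which is $G$-invariant (such a component exists because $g(K)\subset K$ forces a choice of invariant lift once we fix $G$, and surjectivity of $H_G$ on $\tilde K$ means $0$ is attained somewhere; translating we may assume it is attained on an invariant component), the set $\tilde K_0\cap H_G^{-1}(0)$ is compact but possibly still disconnected, so finally let $X=\fil(\tilde K_0)\cap H_G^{-1}(0)$ or just $X=\fil(\tilde K_0)$; since $\fil(\tilde K_0)$ is a continuum, compact (by the same bound $|H_G-x|<C$ applied on $\tilde K_0$, its filling is also bounded) and $G$-invariant, Kuperberg's Theorem \ref{kryst} applies and yields a fixed point of $G$ in $X\subset\tilde A$.

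The main obstacle I anticipate is precisely this connectedness/invariance bookkeeping: ensuring that one can choose a single connected continuum that is genuinely $G$-invariant (not merely invariant up to a deck transformation) and still bounded. The cleanest route is probably to work with $\fil(\tilde K_0)$ for an invariant lift $\tilde K_0$ of $K$: boundedness follows because $H_G$ is bounded-distance from the first coordinate on $\tilde K_0$ combined with the value $0$ being attained there (so $\tilde K_0$ lies in a vertical strip), hence so does its filling; $G$-invariance of the filling follows from $G$-invariance of $\tilde K_0$ together with the fact that a homeomorphism of the plane carries bounded complementary components to bounded complementary components. Once $\fil(\tilde K_0)$ is seen to be a bounded, $G$-invariant continuum in $\R^2\cong\tilde A$, Theorem \ref{kryst} finishes the proof, and projecting down gives $\fix(g)\neq\emptyset$ as well. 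Combining Lemmas \ref{preserving} and \ref{reversing} with Corollary \ref{final} (applied to the iterates $f^k$, whose lifts all have fixed points by the same argument applied to $f^k$ and the still-essential continuum $K$) completes the proof of Theorem \ref{per}.
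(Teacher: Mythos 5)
There is a genuine gap, and it sits exactly where you locate your ``main obstacle'': producing a compact \emph{connected} $G$-invariant set to which Kuperberg's Theorem \ref{kryst} can be applied. Your resolution rests on the claim that the connected components of $\pi^{-1}(K)$ are bounded lifts of $K$, and that the chosen component $\tilde K_0$ lies in a vertical strip because $H_G$ attains the value $0$ on it. This is false precisely because $K$ is \emph{essential}: in that case no connected component of $\pi^{-1}(K)$ is compact (the paper states this explicitly just before Lemma \ref{semi}); each component is unbounded in the horizontal direction, being invariant under some power of the deck translation $z\mapsto z+1$. Since $|H_G(x,y)-x|$ is bounded on $\tilde K$, the map $H_G$ is unbounded on every such component, so attaining the value $0$ somewhere does not confine the component to a strip, and $\fil(\tilde K_0)$ is not a continuum of the plane. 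The set you do correctly identify as compact and invariant, $H_G^{-1}(0)$, need not be connected, and $G$ may permute its (possibly infinitely many) components without preserving any single one; neither passing to a component nor filling repairs this. So no bounded $G$-invariant continuum is produced, and Kuperberg's theorem cannot be invoked.

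The paper closes this gap by two arguments specific to the orientation-reversing situation, split according to the sign of $d$. If $d<-1$ (so $g$ fixes the ends of $A$), one modifies $G$ off $\pi^{-1}(K)$ via Lemma \ref{extend} so that it extends to the two-point compactification $D$ of $\R\times[0,1]$; the set $\pi^{-1}(K)\cup\{-\infty,+\infty\}$ \emph{is} a continuum in the disk $D$ (the points at infinity glue the unbounded components together), it is invariant, and Kuperberg's theorem gives a fixed point in it; that fixed point cannot be $\pm\infty$ because $d<0$ forces $G$ to interchange $-\infty$ and $+\infty$. If $d>1$ (so $g$ interchanges the ends of $A$), this trick fails since $G$ then fixes both points at infinity; instead the paper uses the semiconjugacy to $m_d$ to produce an invariant \emph{connector} --- an inessential continuum joining the two boundary circles, which therefore has bounded lifts --- one of whose lifts is fixed by $G$, and applies Kuperberg's theorem to that. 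Your proposal would need one of these mechanisms (or an equivalent one) to manufacture the required bounded invariant continuum; as written, the connectedness/boundedness bookkeeping does not close.
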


\begin{proof} There are two options:

\noindent {\bf Case 1: $g$ has negative degree and fixes both ends of $A$.}\\
Let $G$ be a lift of $g$ and note that by Lemma \ref{extend}, the map $G$ can be modified without changing its restriction to $\pi^{-1}(K)$, in order to obtain a map that extends to the closure of $\R\times (0,1)$. This extension (also denoted $G$) induces a homeomorphism defined in the compactification with two points $\{-\infty,\infty\}$ of
$\R\times [0,1]$. Note that $G$ carries $-\infty$ to $\infty$ and viceversa. The map $G$ is a homeomorphism of a closed ball, and it can be extended to the whole plane. Theorem \ref{kryst} implies that $G$ has a fixed point in $\pi^{-1}(K)\cup\{-\infty,\infty\}$, as this set is connected (even if $\pi^{-1}(K)$ is not; see Figure \ref{ka}).
Note that this fixed point cannot be $-\infty$ nor $\infty$, as this constitutes a
two periodic orbit. So, every lift of $g$ has a fixed point.  Note, moreover, that in this case, the fixed point belongs to the set $\pi^{-1}(K)$.

\noindent
{\bf Case 2: $g$ has positive degree and interchanges both ends of $A$.}\\
Lemma \ref{extend} can be used to modify the map $g$ in order that it
can be extended to a covering of the closed annulus, and as $g$ reverts the ends, the modification can be performed without changing the fixed point set.

So, by Corollary \ref{periodicos} with $K=\overline A$, we may assume that $g$ is semiconjugate to $m_d$. This in its turn implies that $g$ has an invariant
connector, meaning an inessential continuum of the closed annulus connecting both boundary components (see \cite{iprx}, Corollary 9). Moreover, $g$ has an invariant connector contained in each of the
preimages under
the semiconjugacy of a fixed point of $m_d$. Then any lift $G$ of $g$ must fix one of the lifts of these invariant connectors. Then extend $G$ as a homeomorphism of the whole plane and apply Kuperberg's
Theorem to conclude that $G$ has a fixed point in that invariant connector.\\

\end{proof}

\noindent
{\bf We are now ready to prove Theorem \ref{per}}.

\begin{proof}  By Corollary \ref{final}, it is enough to prove that for all $n$, every lift of $f^n$ has a fixed point. Note that for all $n$, $f^n (K)\subset K$.  If $f$ is orientation preserving,
so is $f^n$ for all $n$, so applying Lemma \ref{preserving} we obtain the result.  If $f$ is orientation reversing, we obtain the result by applying Lemma \ref{preserving} to even powers of $f$,
and Lemma \ref{reversing} to odd powers of $f$.

\end{proof}

\section{Location of periodic orbits}\label{loc}

In this section, we prove Theorem \ref{relleno}.  That is, that the periodic points given by Theorem \ref{per} can be found in $\fil (K)$.
We assume throughout this section that $K$ is an essential continuum such that $f(K)\subset K$. Recall that $f$ is complete because of Theorem \ref{per}.

We will denote $\pi ^{-1}(\fil{K}) = \hat K$. Note that $\hat K$ is not necessarily connected (see Figure \ref{ka}). However, if $K'$ denotes the closure of $\hat K$ in $D$, then $K'$ is  a connected subset of $D$, the two point compactification of $\R\times [0,1]$, as there are no bounded connected components of $\hat K$ and because $K$ is essential.  Also, this set $K'\subset D$ is filled.

\begin{figure}[ht]

\psfrag{k}{$K$} \psfrag{tildea}{$\tilde{A}$} \psfrag{hatk}{$\hat{K}$}
\begin{center}
\subfigure[]{\includegraphics[scale=0.09]{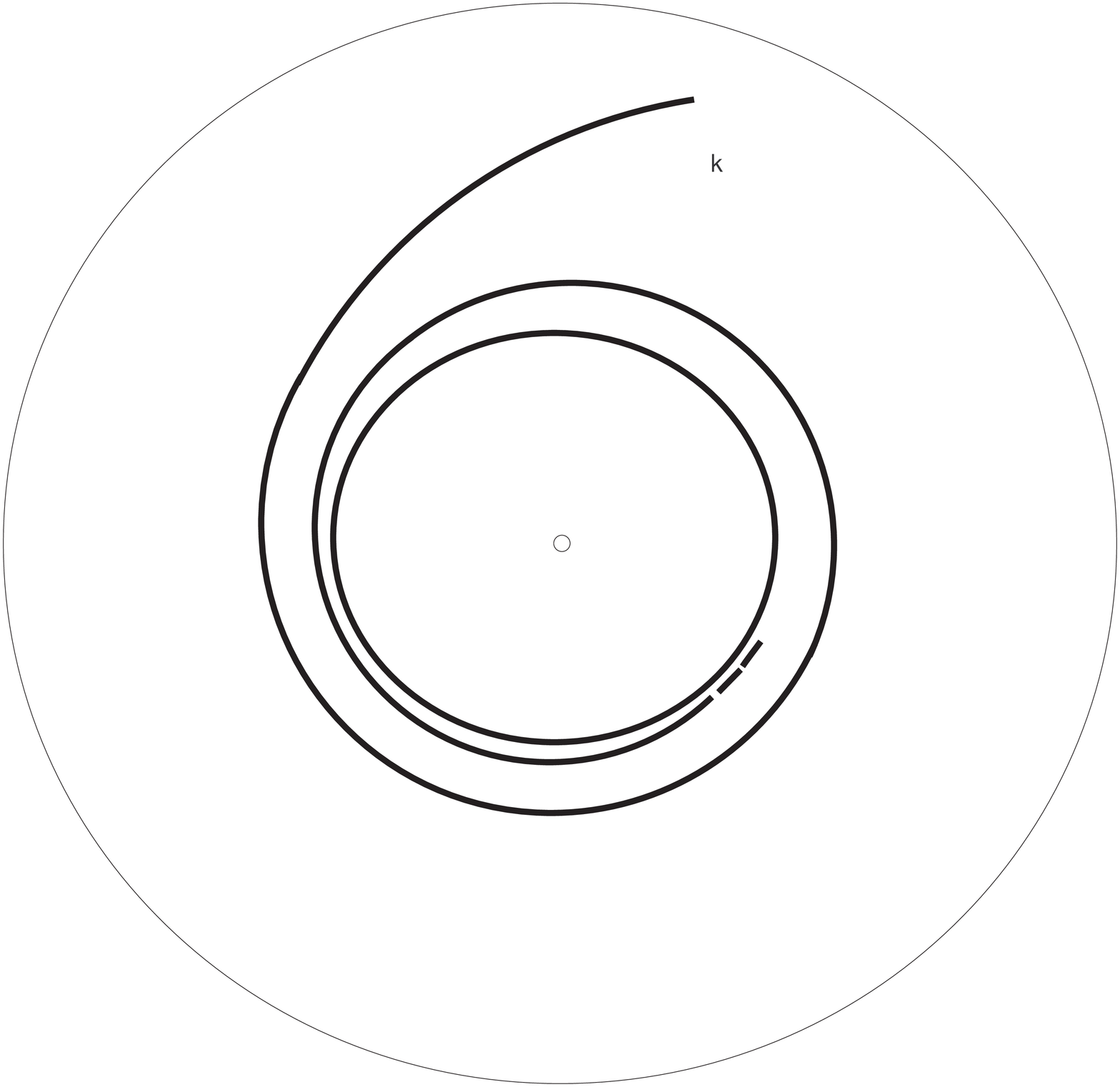}} \subfigure[]{%
\includegraphics[scale=0.22]{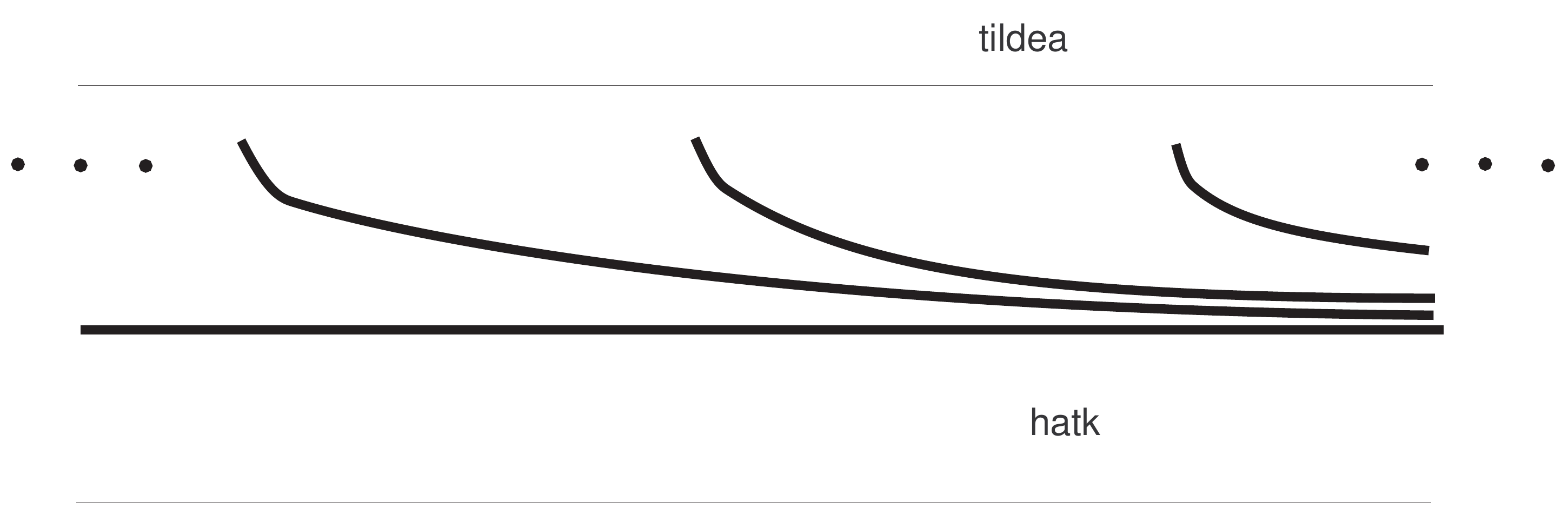}}
\end{center}
\caption{}
\label{ka}
\end{figure}

To prove Theorem \ref{relleno} it is enough to prove that for all $n\in\N$, every lift of $f^{n}$ has a fixed point in $\hat  K$ (see Corollary \ref{final}).

\begin{lemma}  Let $g: A\to A$ be a covering map of degree $d$, $|d|>1$, and let $K\subset A$ be an essential continuum such that $g(K)\subset K$.  Then, every lift of $g$ to the universal
covering $\tilde A$ has a fixed point in $\hat  K$.
\end{lemma}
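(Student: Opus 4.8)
The plan is to mirror the proof of Theorem \ref{per} given above, but to keep track of where the fixed points actually live, namely inside $\hat K$. As remarked after the figure, the relevant object is the compactification $D$ of $\R\times[0,1]$ with two points, and $K'$, the closure of $\hat K$ in $D$, which is a connected, filled subset of $D$ (and $D$ is a topological $2$-sphere, so $K'$ is a continuum in a sphere). The strategy is: fix a lift $G$ of $g$; use Lemma \ref{extend} to replace $G$ by a homeomorphism of the closed strip $\R\times[0,1]$ without altering its restriction to $\pi^{-1}(K)$ (hence without altering $\hat K$ or $G|_{\hat K}$); extend this to a homeomorphism of $D\cong S^2$; and then apply a Cartwright--Littlewood type fixed point theorem to the invariant continuum $K'\subset S^2$. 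Concretely, in the orientation preserving case one uses Brouwer's theorem together with the argument in Lemma \ref{fijo} to get a fixed point of $G$ in a compact invariant set, and one must check that that compact invariant set can be taken inside $\hat K$; in the orientation reversing case one uses Kuperberg's theorem \ref{kryst} (or Cartwright--Littlewood \cite{cl}, \cite{brown}, \cite{bell}) to obtain a fixed point \emph{in} the invariant continuum $K'$.

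The key point that makes the fixed point land in $\hat K$ rather than merely in $\tilde A$ is the surjectivity of $H_G$ on $\tilde K$ (Lemma \ref{sur}): since $K$ is essential, $H_G:\tilde K\to\R$ is onto, so $H_G^{-1}(0)$ is a nonempty compact subset \emph{of} $\tilde K\subset\hat K$, and it is $G$-invariant because $H_GG=dH_G$. In the orientation preserving case, Brouwer's theorem applied to the homeomorphism $G$ of the plane, which has the nonempty compact invariant set $H_G^{-1}(0)\subset\hat K$, yields a fixed point; but to force the fixed point itself into $\hat K$ one should apply instead the Cartwright--Littlewood/Brown argument to the continuum $K'$ (the closure in $D$ of $\hat K$), which is connected, filled, and $G$-invariant, concluding that $G$ has a fixed point in $K'$. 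One then argues, exactly as in Case 1 of Lemma \ref{reversing}, that this fixed point is not one of the two added points (they form a genuine period-two orbit when the ends are interchanged, and when the ends are fixed one checks separately that the ends are not relevant — or simply notes the fixed point may be taken in $\pi^{-1}(K)\subset\hat K$ directly from $H_G^{-1}(0)$). So for orientation preserving $g$, or $g$ with negative degree fixing the ends, the fixed point lies in $\hat K$.

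For orientation reversing $g$ of positive degree, which interchanges the ends, I would follow Case 2 of Lemma \ref{reversing}: after using Lemma \ref{extend} to extend $g$ to a covering of the closed annulus without changing the fixed point set, Corollary \ref{periodicos} gives a semiconjugacy to $m_d$, hence an invariant connector, and one checks (as in \cite{iprx}, Corollary 9) that there is an invariant connector sitting inside $\fil(K)$ — or at least inside a set whose lift is contained in $\hat K$ — because the connector can be chosen in the preimage under the semiconjugacy of a fixed point of $m_d$, and the relevant piece of $K$ together with the bounded complementary components gives a subset of $\fil(K)$. Lifting, $G$ fixes a lift of this connector, which is a continuum contained in $\hat K\cup\{\pm\infty\}$; Kuperberg's theorem then produces a fixed point in it, and that fixed point is not $\pm\infty$ since those form a $2$-cycle. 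Finally, the passage from a single covering map to all powers: $g^n(K)\subset K$ for every $n$, and $\pi^{-1}(\fil(K))=\hat K$ is independent of $n$, so applying the lemma to $f^n$ (orientation preserving for even $n$, possibly reversing for odd $n$) shows every lift of $f^n$ has a fixed point in $\hat K$; by Corollary \ref{final} and the discussion preceding this lemma, Theorem \ref{relleno} follows.

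The main obstacle I anticipate is the bookkeeping needed to guarantee that the invariant continuum one feeds into Cartwright--Littlewood/Kuperberg is genuinely contained in $\hat K$ (not just in $\tilde A$): one must verify that $K'=\overline{\hat K}^{\,D}$ is filled and connected in the sphere $D$ — which uses essentiality of $K$ to rule out an unbounded complementary component wrapping around, and uses that $\fil(K)$ has no bounded complementary components by definition — and, in the orientation reversing positive degree case, that the invariant connector from \cite{iprx} can be located inside $\fil(K)$ rather than merely inside $\overline A$. Everything else is a transcription of the arguments already assembled in Section \ref{proofs}.
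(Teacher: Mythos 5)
Your reduction of the orientation reversing cases to Lemma \ref{reversing} is fine where the ends are interchanged (there the two added points form a genuine $2$-cycle, or better, the two components of $\tilde A\setminus \hat K$ are swapped so that $\fix(G)\subset \hat K$ automatically). But the main case --- $g$ orientation preserving with both ends fixed --- has a real gap, and it is precisely the case where the paper has to work. Your plan is to apply a Cartwright--Littlewood type theorem to the invariant continuum $K'=\overline{\hat K}\subset D$ to force a fixed point into $K'$, and then discard the two added points. When the ends are fixed, however, $\pm\infty$ are fixed points of $G$ lying \emph{in} $K'$, so the conclusion ``$G$ has a fixed point in $K'$'' is already witnessed by $\pm\infty$ and gives nothing; there is no orientation preserving analogue of Kuperberg's theorem that would let you exclude them (recall that for orientation preserving homeomorphisms the invariant continuum must be nonseparating, and the conclusion is only ``some fixed point in the continuum''). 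Your fallback --- that ``the fixed point may be taken in $H_G^{-1}(0)$ directly'' --- is false: Brouwer's Theorem \ref{brouwer} produces a fixed point somewhere in the plane from the existence of a nonwandering point, it does not localize it in the compact invariant set (Example \ref{e1} shows the fixed points can avoid $K$, hence can avoid $H_G^{-1}(0)\subset\pi^{-1}(K)$). The paper's actual argument is different and is the missing idea: assume $\fix(G)\cap\hat K=\emptyset$, let $P$ be the set of interior fixed points of $G$ in $D$, let $U$ be the component of $D\setminus P$ containing $K'$, pass to the universal covering $p:\tilde U\to U$, and choose the lift $\tilde G$ preserving a component $K''$ of $p^{-1}(K')$ (this uses that $K'$ is filled and connected, hence has a simply connected neighborhood). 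Then $p^{-1}(H_G^{-1}(0))\cap K''$ is a nonempty compact $\tilde G$-invariant set in the interior of $\tilde U$, so Brouwer gives a fixed point of $\tilde G$ there, which projects to a fixed point of $G$ in $U$ --- contradicting $U\cap P=\emptyset$. Without this ``delete the fixed points and unwrap'' step the fixed point cannot be pushed into $\hat K$.

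A secondary problem is your Case 3 ($g$ orientation reversing, $d>1$): an invariant connector is by definition a continuum joining the two boundary components of the closed annulus, so it cannot be contained in $\fil(K)$ (a compact subset of the open annulus), nor can its lift be contained in $\hat K\cup\{\pm\infty\}$, since $\hat K$ is bounded away from the boundary lines. Kuperberg's theorem applied to the lifted connector therefore only gives a fixed point somewhere in $\tilde A$, not in $\hat K$. The correct (and much shorter) argument is the paper's: since $g$ interchanges the ends, $G$ interchanges the two components $U_1,U_2$ of $\tilde A\setminus\hat K$, so $\fix(G)\subset\hat K$, and nonemptiness of $\fix(G)$ is exactly Case 2 of Lemma \ref{reversing}.
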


\begin{proof}  The proof will be subdivided in three cases.

\noindent {\bf Case 1: $g$ is orientation preserving.}

Let $G:\R^{2}\to \R^{2}$ be a lift of  $g$ and let $H_G: \hat K \to \R$ be the map given by Lemma \ref{semi}.  Let $K_0 = H_G^{-1}(0)$ and recall from Lemmas \ref{semi} and \ref{sur} that  $ K_0\neq\emptyset$,
$K_0\subset \hat{K}$, $G(K_0)\subset K_0$ and that $K_0$ is a compact subset of $\R^2$. Suppose for a contradiction that $\fix(G)\cap \hat K=\emptyset$. Let $D$ be the compactification of
$\R\times [0,1]$ with two points $-\infty$ and $+\infty$.  Note that $D$ is a closed disk and that we may assume that $G$ extends to the boundary of $D$, by Lemma \ref{extend}. Moreover, the closure $K'$ of $\hat K$ is  a connected subset of $D$.
Let $P$ be the set of fixed points of $G$ in the interior of $D$.  Note that $P$ does not accumulate at $-\infty$ or $+\infty$.  Define $U$ as the connected component of
$D\setminus P$ containing $K'$, and let $(\tilde U,p)$ be the universal covering of $U$.  Note that $U$ is $G$- invariant, and we claim that there exist a lift
$\tilde G: \tilde U \to \tilde U$ of $G|_U$ having a compact invariant set in  the interior $\tilde U$. To see this, take an open, connected and simply connected neighborhood $V\subset U$ of $K'$ (whose existence is
guaranteed as the set is filled), and note that each
connected component of $p^{-1} (V)$ is mapped homeomorphically onto $V$ by $p$.  Moreover, as $K'$ is connected,
there is only one connected component of $p^{-1} (K')$ in each connected component of $p^{-1} (V)$.  Fix a connected component $ K''$ of
$p^{-1} (K')$ and take the lift $\tilde G$ of $G$ such that $\tilde G ( K'') \subset  K''$.  Note that $p^{-1} (H_G ^{-1}(0))\cap  K''$ is $\tilde G$-invariant, compact and contained in the interior of $\tilde U$. So, as $\tilde G$ is orientation preserving,  Brouwer's Theorem gives a fixed point of $\tilde G$ in the interior of $\tilde U$.  This is a contradiction because by definition, there are no fixed points in the interior of $U$.

\noindent {\bf Case 2: $g$ is orientation reversing, and $d<-1$.}

Note that this has already been proved in Case 1 of the proof of Theorem \ref {per}.

\noindent {\bf Case 3: $g$ is orientation reversing, and $d>1$.}
 Let $U_1$ and $U_2$ be the connected components of $\tilde A\backslash \hat K$.  Note that our hypothesis implies that $G(U_i) \cap U_i = \emptyset, i=1, 2$.  So,  $\fix (G)\subset \hat K$.  It is enough then to prove that $\fix (G)\neq \emptyset$.  This has already been proved in Case 2 of the proof of Theorem \ref {per}.

\end{proof}

 \section{Examples}\label{ex}

In this section we exhibit a series of examples illustrating all the ideas in this article.  Examples \ref{e1}, \ref{e4} and
\ref{nobrou} are particularly interesting, regardless of their connection to the theorems presented in this paper.

\subsection{Location of periodic orbits}\label{e1}
Our first example shows that the periodic points given by Theorem \ref{per}
do not necesarilly belong to $K$.

We will show that there exists a degree two covering map $f$ of the annulus having an essential continuum $K$,
totally invariant, which does not contain fixed points of $f$.\\

\begin{figure}[ht]
\caption{}
\label{figura7}
\begin{center}
\includegraphics[scale=0.2]{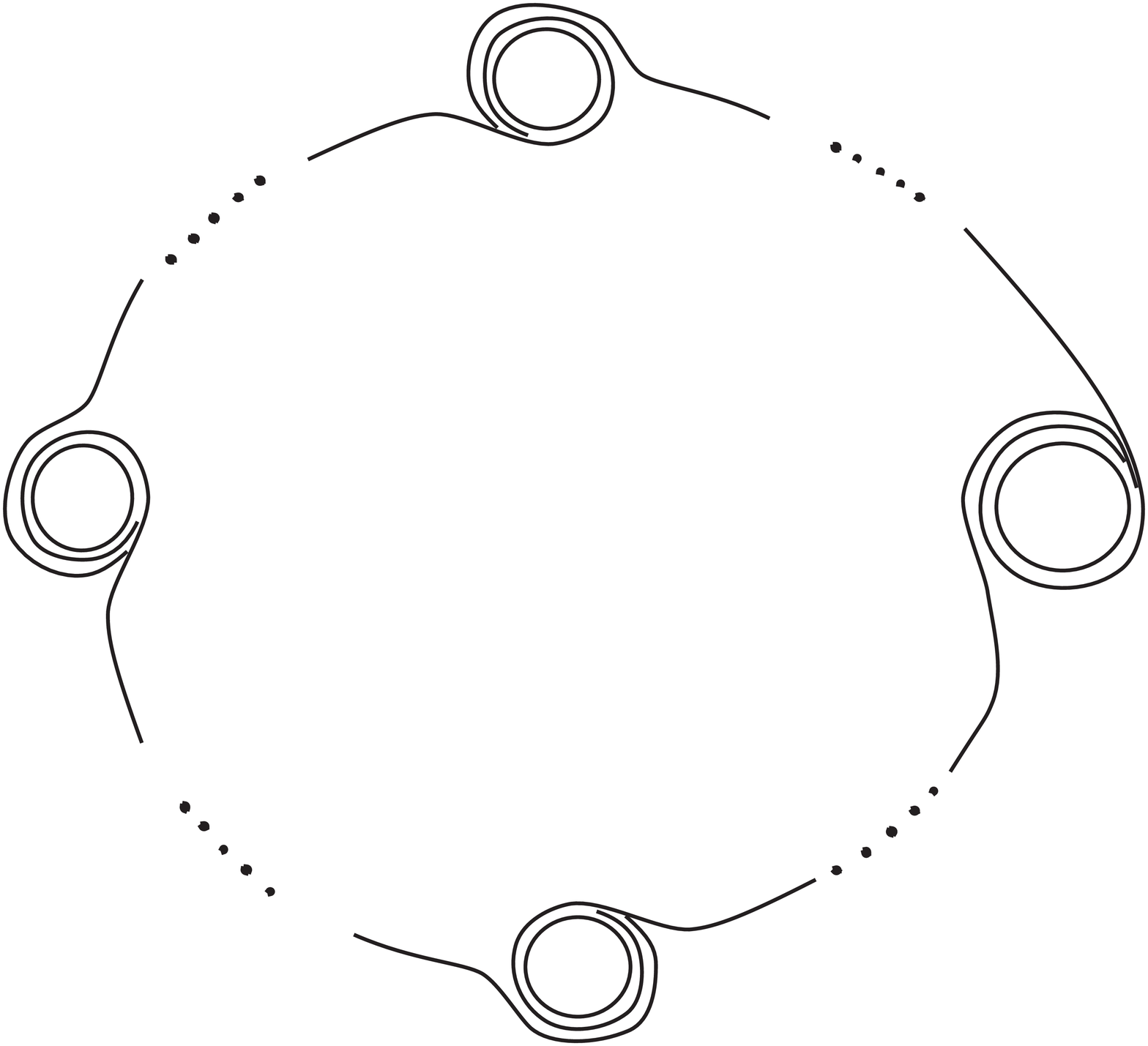}
\end{center}
\end{figure}

We construct an isotopy from $f_0=p_2$, $p_2 (z) = z^2$, to $f_1=f$ in the annulus $A=\C\setminus\{0\}$.
For every $t$, $f_t(z)=f_0(z)$ for every $z$ outside a neighborhood $V$ of the fixed point $1$.
Every $f_t$ will be a homeomorphism from $V$ to $f_0(V)$.
For points in $V$, the restriction of $f_t$ to $V$ will have a unique fixed point at $1$. Around this point $f_t$ performs a Hopf bifurcation (see Figure \ref{figura7}).  That is,
for $t$ close to $0$, the eigenvalues at the fixed point $1$ of $f_t$ have nonzero imaginary part; the modulus is decreasing, and for $t$ equal to $1/2$ the Hopf bifurcation takes place: the modulus of the eigenvalues is equal to $1$, while the imaginary part is different from $0$. Then let $f_t$ for $t>1/2$ be a generic family through the Hopf bifurcation. The following facts hold for $f_1$:  $1$ is an attracting fixed point, there is a repeller simple closed curve $C$ where $f$ is conjugate to a rotation with nonzero rotation number, and every point $z\in V\backslash\{1\}$ has a preorbit in $V$ which converges to $C$.\\

Now let $K$ be the boundary of the basin of $\infty$.
Then $K$ is a totally invariant essential continuum. It is clear that $K$ does not contain a fixed point of $f=f_1$.\\

\subsection{A fixed point free example having a point with zero rotation number}\label{e2}

It may happen that $\lim_{n\to \infty} \frac{(F^n (x))_1}{d^n}=0$ for some lift $F$ of $f$ and $x\in \tilde A$, but $\fix(F)=\fix(f)=\emptyset$.  Just consider a degree $2$ map preserving a
ray of the annulus in which the dynamics is north-south, and lift it preserving a lift  of that ray.


\subsection{Changing the lift}\label{e3}

This example shows that the map $f$ may have a lift with fixed points and another lift which is fixed-point free.

Let $f:[0,2\pi]\times (0,1)\to [0,2\pi]\times (0,1)$ con $f(\theta ,r)=(3\theta , \phi(r,\theta ))$, where $\phi$ fixes the rays  $\theta= 0$ and $\theta= \pi$.  On the ray  $\theta= 0$, the
dynamics of $\phi$ is as in Figure \ref{figura2} (a), and on the ray $\theta = \pi$, $\phi$ is as in Figure \ref{figura2} (b).  So, $(0,1/2)$ is fixed by $f$ and you can lift $f$ by fixing
any of the lifts of $(0,1/2)$.  However, if you take a lift $F$ of $f$ fixing any preimage of $\theta= \pi$, then $\fix(F)= \emptyset$.

\begin{figure}[ht]
\psfrag{f}{$\varphi$}
\psfrag{1}{$1$} \psfrag{0}{$0$}
\psfrag{2}{$\frac{1}{2}$}

\begin{center}
\subfigure[]{\includegraphics[scale=0.2]{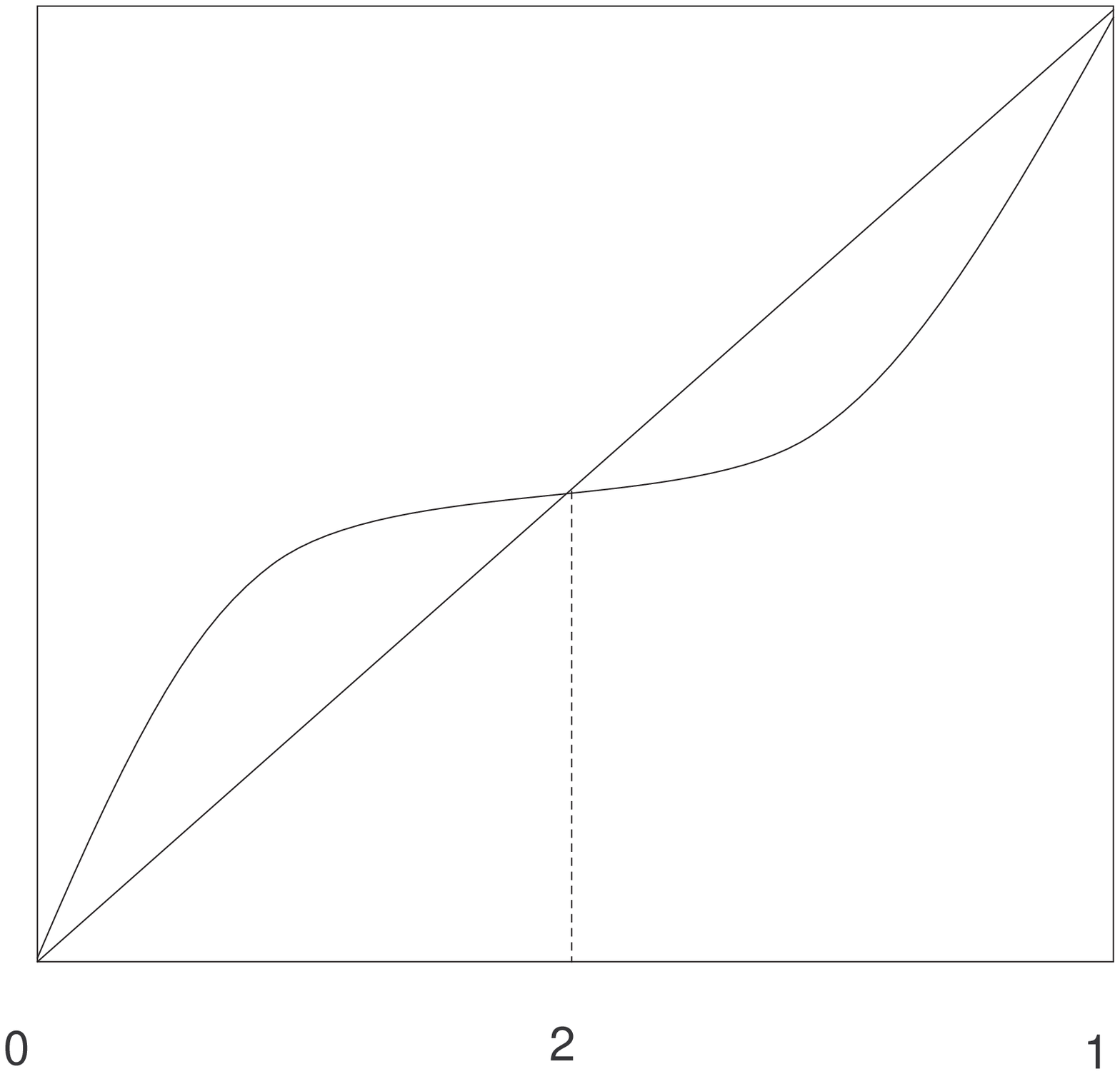}}
\subfigure[]{\includegraphics[scale=0.2]{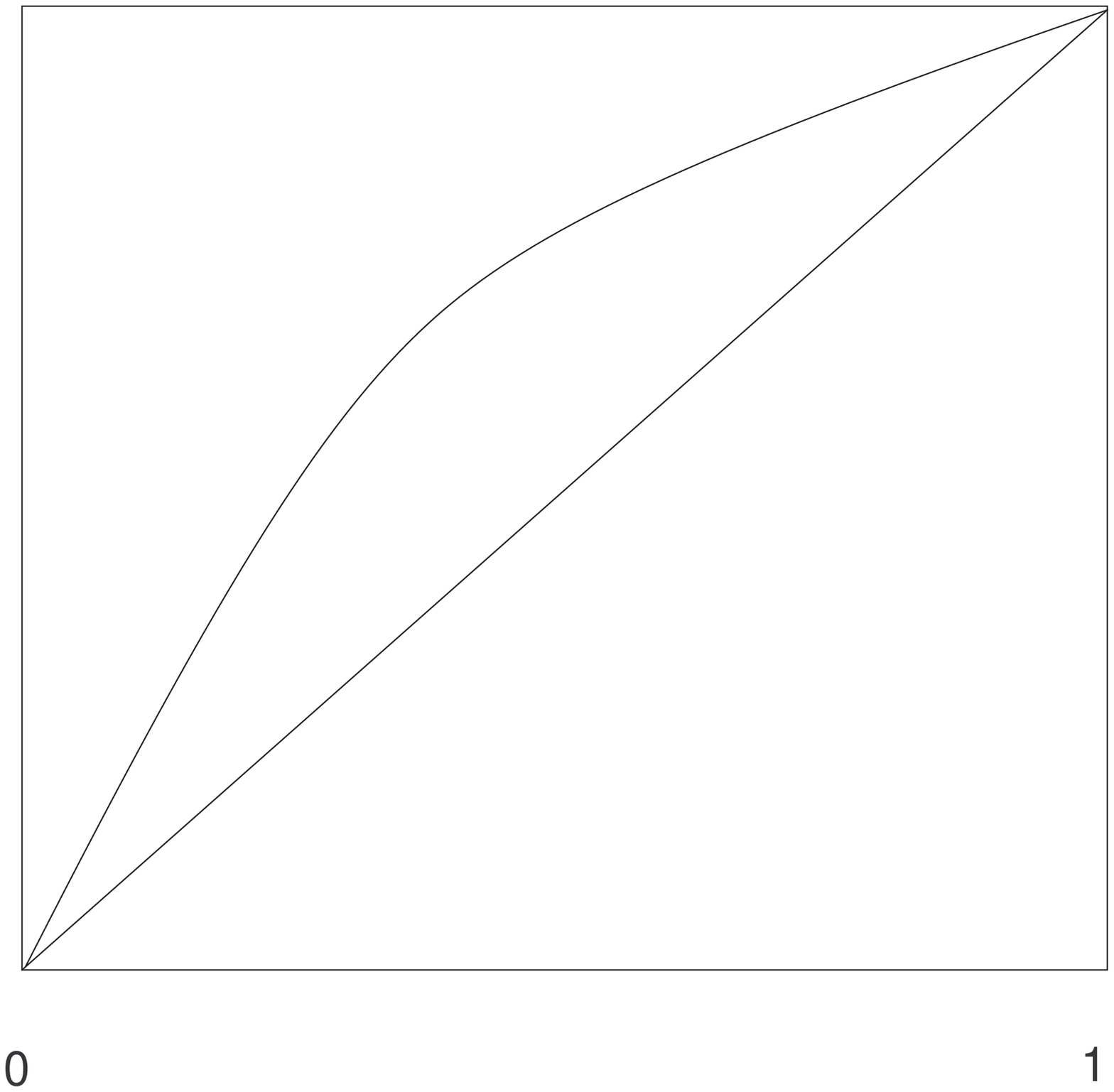}}
\end{center}
\caption{}\label{figura2}
\end{figure}

\subsection{Recurrence and periodic orbits}\label{e4}

As in the fixed-point free degree $2$ covering example $(r,\theta)\mapsto
(2r,2\theta)$ every point is wandering, one may ask if the existence of a
non-wandering point is enough to assure the existence of a fixed point. The
next example shows that this is not the case.

We will construct a degree $2$ covering $f:(0,+\infty)\times S^{1}\to(0,+\infty)\times S^{1}$ such that there is a compact set $K$ satisfying $f(K)= K$ and $\per(f) = \emptyset$.  Of course, $K$ must be inessential and not connected (see Theorem \ref{per} and Lemma \ref{sintasa}).  In fact, in this example $K$ is a Cantor set. We recall
that in \cite{iprx} we showed that for a degree $d>1$ covering $g$ of the circle $\overline{\per (g)}= \Omega (g)$.  This example also shows that this is no longer the case for annulus coverings.

We start with a degree $2$ circle covering having a wandering interval.
Let $g_1: S^{1}\to S^{1}$ be a Denjoy homeomorphism with a wandering interval $I$. Take an open interval  $I_0\subsetneq I$ and an increasing function $h:I\to S^{1}$ such that $h(I_0)=S^1$ and  $h|{_{I\setminus I_{0}}}\equiv g_1$ (see Figure \ref{figura1} (a)). Let $g: S^{1}\to S^{1}$ be the map

\begin{equation*}
g(x)= \left\{
\begin{array}{l}
g_{1}(x) \mbox{ if $x\notin I$} \\
h(x) \ \mbox{ if $x\in I$}%
\end{array}
\right.
\end{equation*}

So, $g$ is a degree $2$ covering of the circle and $g_1(I)$ is a wandering interval for $g$. Besides, if $x_0\in  g_1(I)$ then $K_1 = \omega _g(x_0)$ is a Cantor set and $K_1\cap \per(g) = \emptyset$.

Our example  $f:(0,+\infty)\times S^{1}\to(0,+\infty)\times S^{1}$ has the form $f(r,\theta )=(\phi (r,\theta),g(\theta ))$, where $\phi$ is to be constructed. Let $\psi:S^{1}\to {\mathbb{R}}$, $\psi
(\theta )=dist(\theta, K_1)$ and let $\varphi:(0,+\infty )\to (0,+\infty)$  be as in Figure \ref{figura1} (b).  Define  $\phi  (r,\theta)=\varphi
(r)+r.\psi (\theta)$.

\begin{figure}[ht]

\psfrag{0}{$0$} \psfrag{y=x}{$y=x$} \psfrag{1}{$1$} \psfrag{g}{$g$}
\psfrag{f}{$\varphi$} \psfrag{11}{$\frac{1}{2}$}
\psfrag{i}{$I$}\psfrag{h}{$h$}\psfrag{g1}{$g_1$}
\par
\begin{center}
\subfigure[]{\includegraphics[scale=0.2]{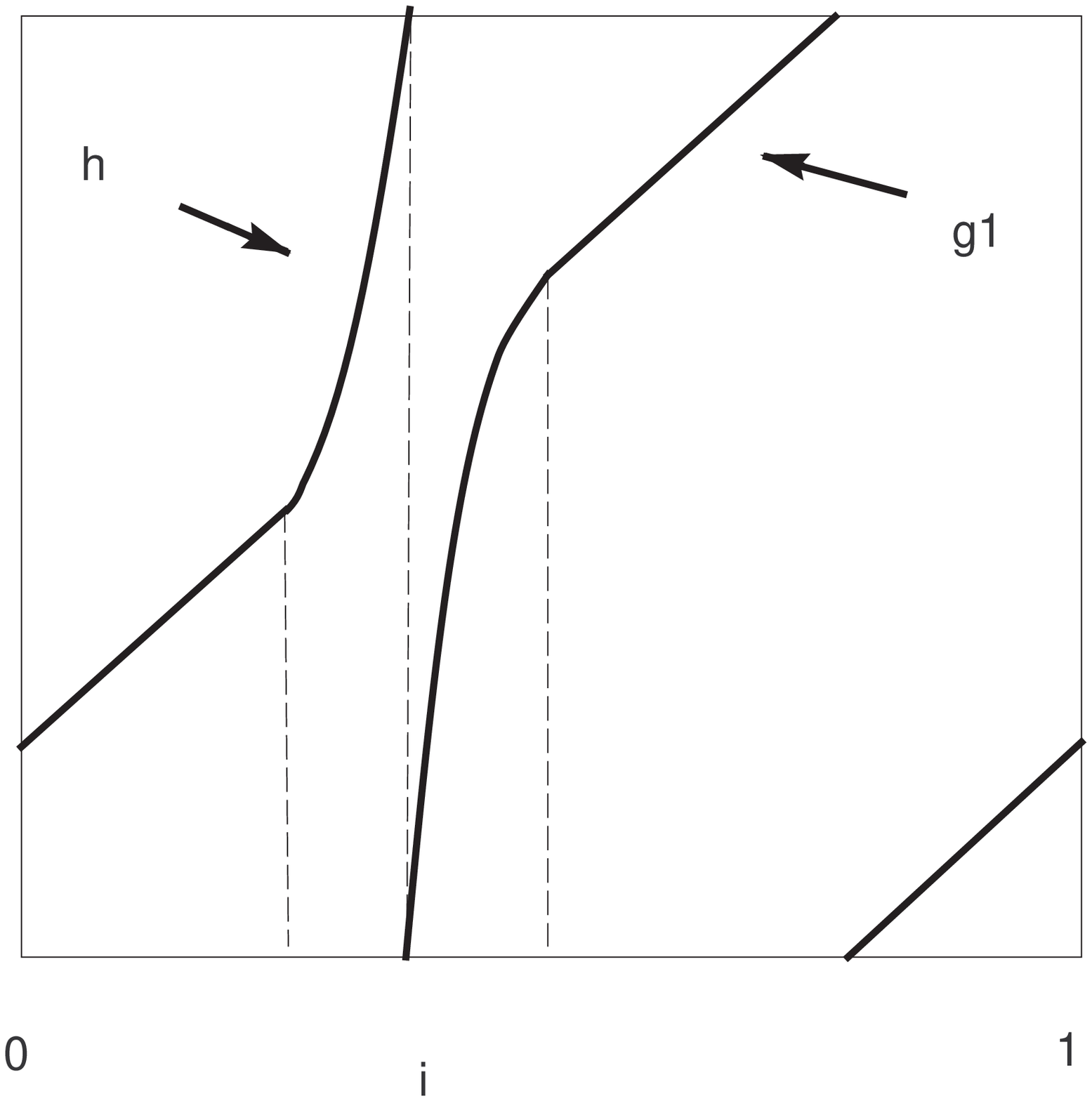}}
\subfigure[]{\includegraphics[scale=0.2]{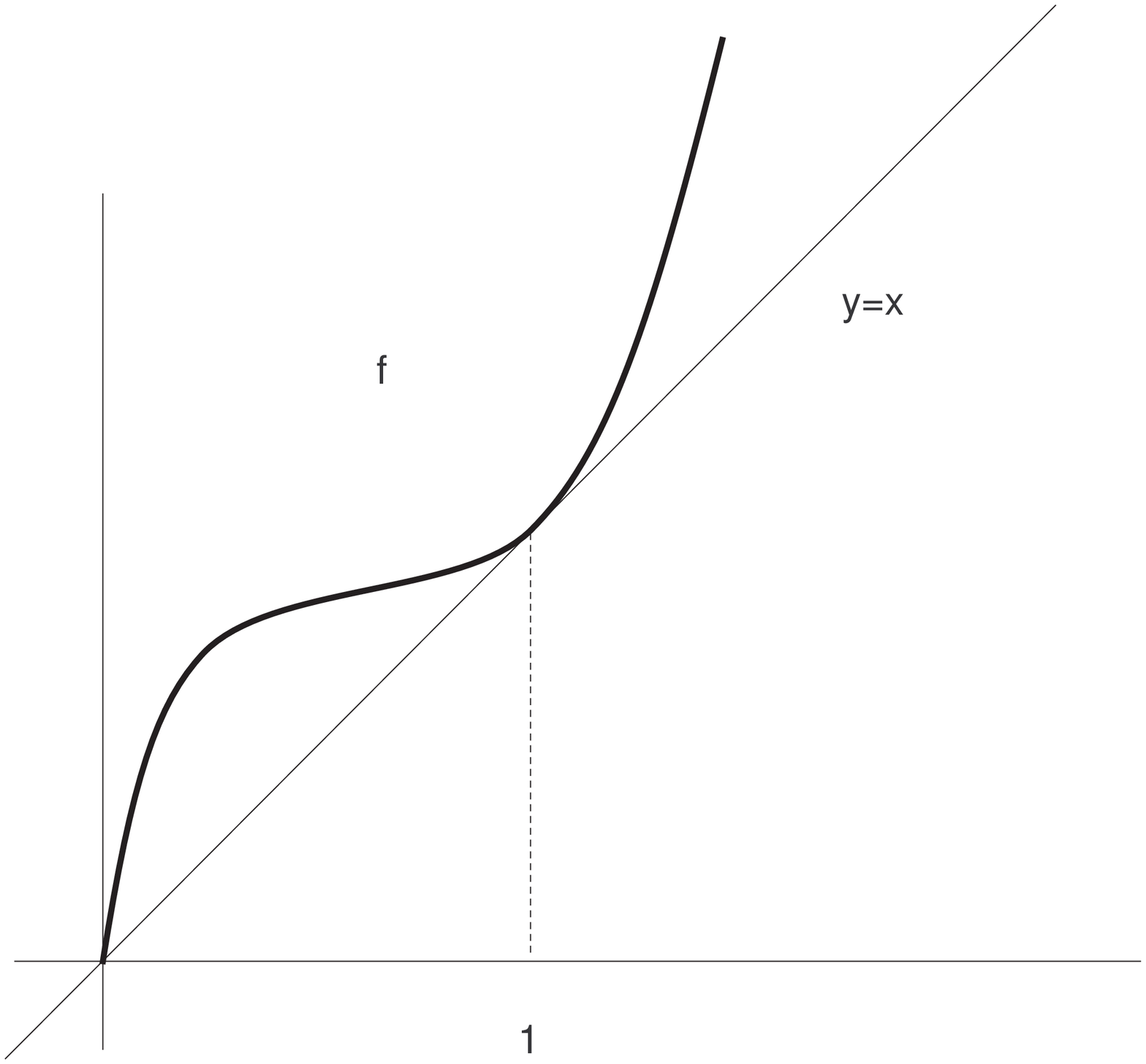}}
\end{center}
\caption{}\label{figura1}
\end{figure}

Note that  $f$ has the following properties :\newline
(1) For fixed $\theta$, let $\phi_{\theta}(r)=\phi(r,\theta )$.  Then,  $\phi_{\theta}$ has fixed points if and only if $\theta\in K_1$, and for $\theta\in K_1$, $\phi_{\theta}$ has a unique fixed point at  $r=1$ .
\newline
(2) $K=\{1\}\times K_1$ is compact and  $f(K)=K$.\newline

Furthermore,  $\per(f) = \emptyset$.  Indeed, if $(r_0,\theta _0)$ is $f$- periodic, then $\theta _0$ must be $g$-periodic.  So, $\theta _0 \notin K_1$.  But this is imposible, as dynamics in the lines $\{(r, \theta): r>0, \theta \notin K_1\}$ is wandering.

Note also that this example can be made $C^1$, if we use the square of the distance in the definition of $\psi$ and enough regularity for the rest of the functions.

\subsection{Non-essential totally invariant subset}\label{e5}  We give an example of a degree $2$ covering of the annulus with a Cantor set $K\subset A$ such that $f^{-1}(K)=K$.  This implies, using Proposition \ref{jp} item 4. in the next section, that $f$ is complete.

Let $g:S^{1}\to S^{1}$ be as in Figure \ref{figura3} (a).
Note that $\Omega (g)=\{0\}\cup K_1$ where $0$ is an attracting fixed point and $K_1$ is an expanding Cantor set with  $g^{-1}(K_1)=K_1$. Let $%
f:(0,1)\times S^{1}\to (0,1)\times S^{1}$, $f(r,\theta )=(\varphi
(r),g(\theta ))$, where $\varphi $ is as in Figure \ref{figura3} (b).
Then,  $K=1/2\times K_1$ is a Cantor set and $f^{-1}(K)=K$.

\begin{figure}[ht]

\psfrag{0}{$0$} \psfrag{y=x}{$y=x$} \psfrag{1}{$1$} \psfrag{g}{$g$} %
\psfrag{gg}{$\varphi$} \psfrag{11}{$\frac{1}{2}$}
\par
\begin{center}
\subfigure[]{\includegraphics[scale=0.2]{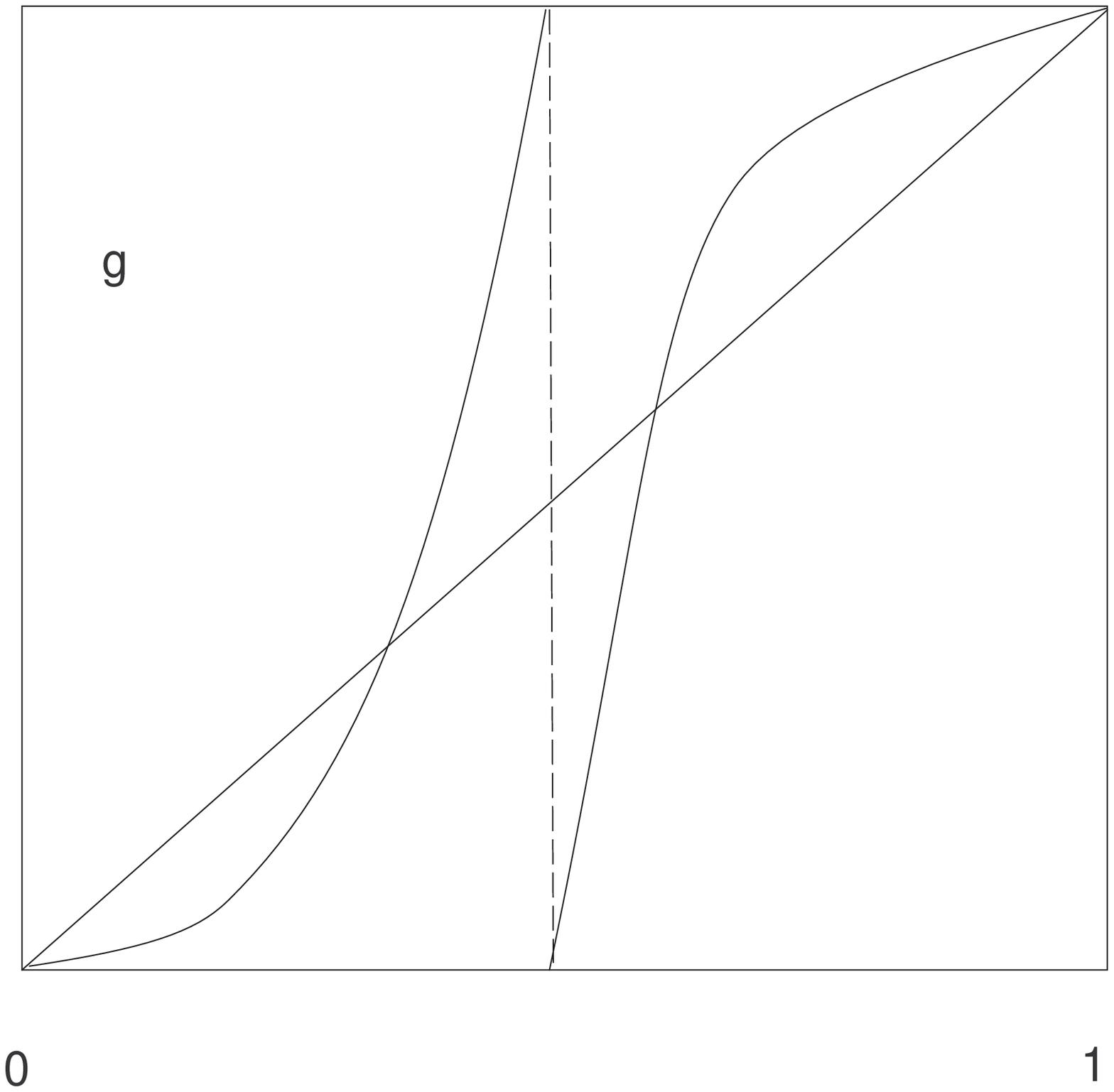}} \subfigure[]{\includegraphics[scale=0.2]{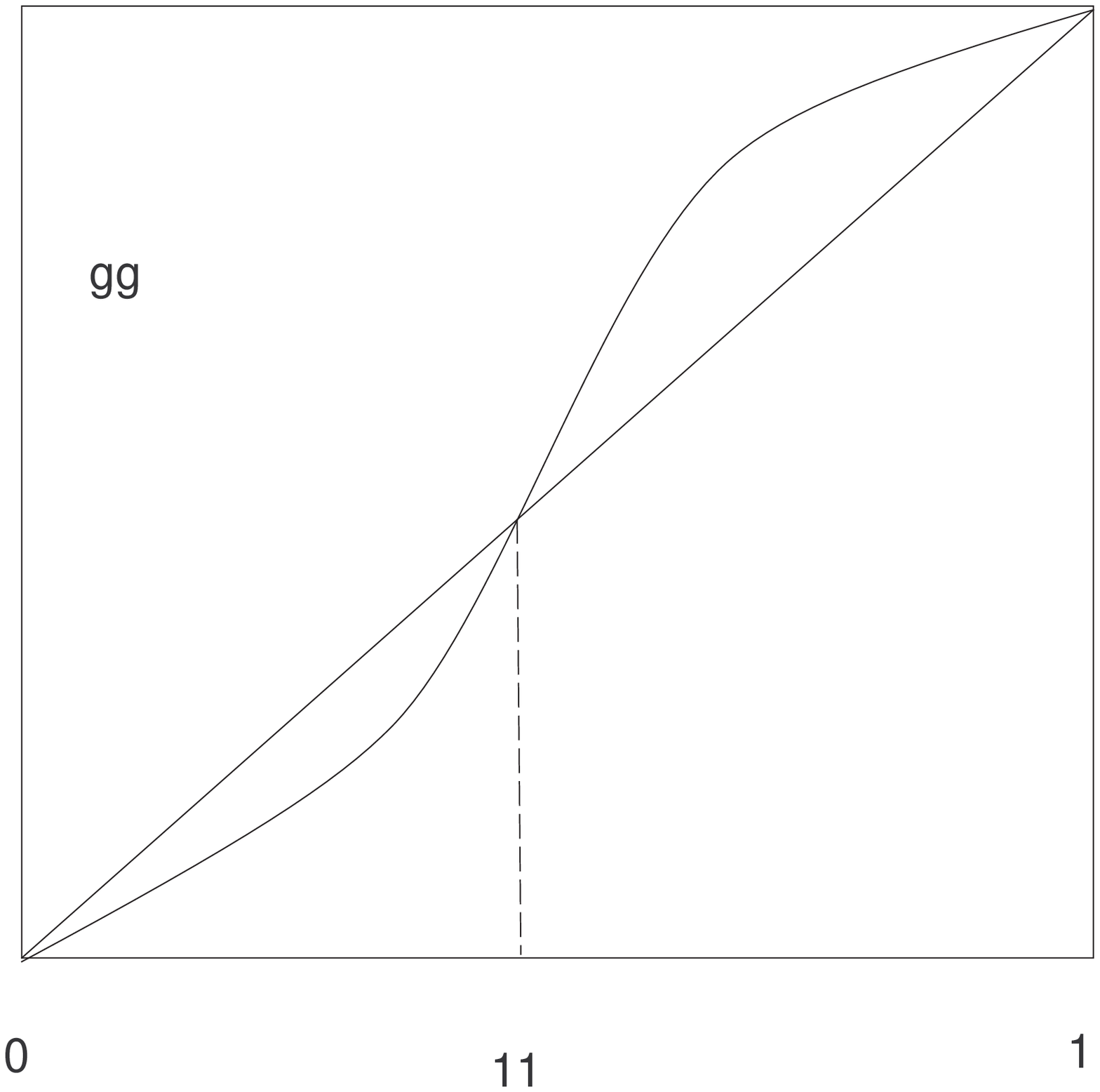}}
\end{center}
\caption{}
\label{figura3}
\end{figure}

\subsection{Fail of Brouwer's Theory}\label{nobrou}

We construct a self- map of the plane $F:\R^{2}\to \R^{2}$ of Brouwer's degree $1$ such that $\Om (F)\neq \emptyset$ and $\per (F) = \emptyset$ (compare with Brouwer's Theorem \ref{brouwer}).

Let $g:S^{1}\to S^{1}$ be the degree $2$ covering map of example \ref{e4} (Figure \ref{figura1} (a)) and let  $K$ be the Cantor set such that $g(K)=K$ and $K\cap \per(g)=\emptyset$. For any degree $d$, $|d|>1$, covering $g:S^1\to S^1$ there exists an increasing semiconjugacy $h_1$ (\cite{iprx} Prop. 1) between $g$ and $q(z)=z^{2}$, that is $h_1g=qh_1$.

 Then,  $h_1(K)$ is compact, $q$- invariant and $\per(q)\cap h_1(K)=\emptyset$. Now, consider the maps $h_2:S^{1}\to [-2,2]$, $h_2(z)=z+\frac{1}{z}$ and $p:[-2,2]\to [-2,2]$, $p(z)=z^{2}-2$. Note that $h_2$ is continuous, surjective and $h_2q=ph_2$. Moreover:
\begin{itemize}
\item $K_1=h_2(h_1(K))$ is compact and $p(K_1)\subset K_1$.
\item $\per (p)\cap K_1=\emptyset$.
\end{itemize}

We need one more auxiliary function to make our example $F:\R^{2}\to \R^{2}$  have degree $1$. Let  $f:\R\to \R$ be as in Figure \ref{pepito}(a), so that $f|_{[-2,2]}=z^{2}-2$. Now we proceed in the same fashion than in example \ref{e4}. Let $\psi:\R\to {\mathbb{R}}$, $\psi
(x)=dist(x, K_1)$ and let $\varphi:\R\to \R$  be as in Figure \ref{pepito} (b).

\begin{figure}[ht]

\psfrag{-2}{$-2$} \psfrag{y=x}{$y=x$} \psfrag{2}{$2$} \psfrag{g}{$g$} %
\psfrag{f}{$f$} \psfrag{e}{$\varphi_{x} \mbox{with } x\notin K$}
\psfrag{ee}{$\varphi_{x} \mbox{with } x\in K$}
\psfrag{i}{$I$}\psfrag{h}{$\varphi$}\psfrag{g1}{$g_1$}
\par
\begin{center}
\subfigure[]{\includegraphics[scale=0.2]{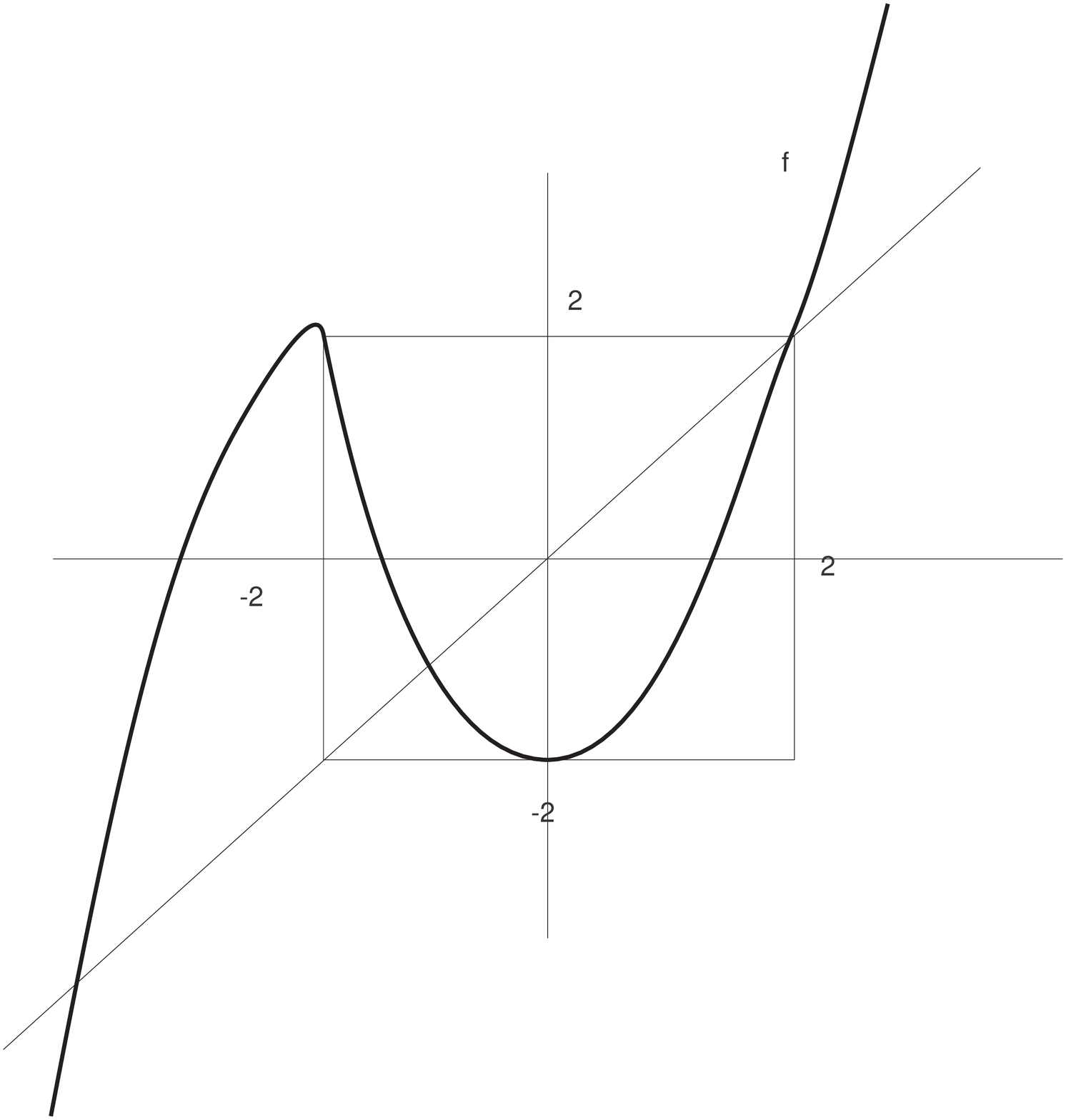}} \subfigure[]{%
\includegraphics[scale=0.2]{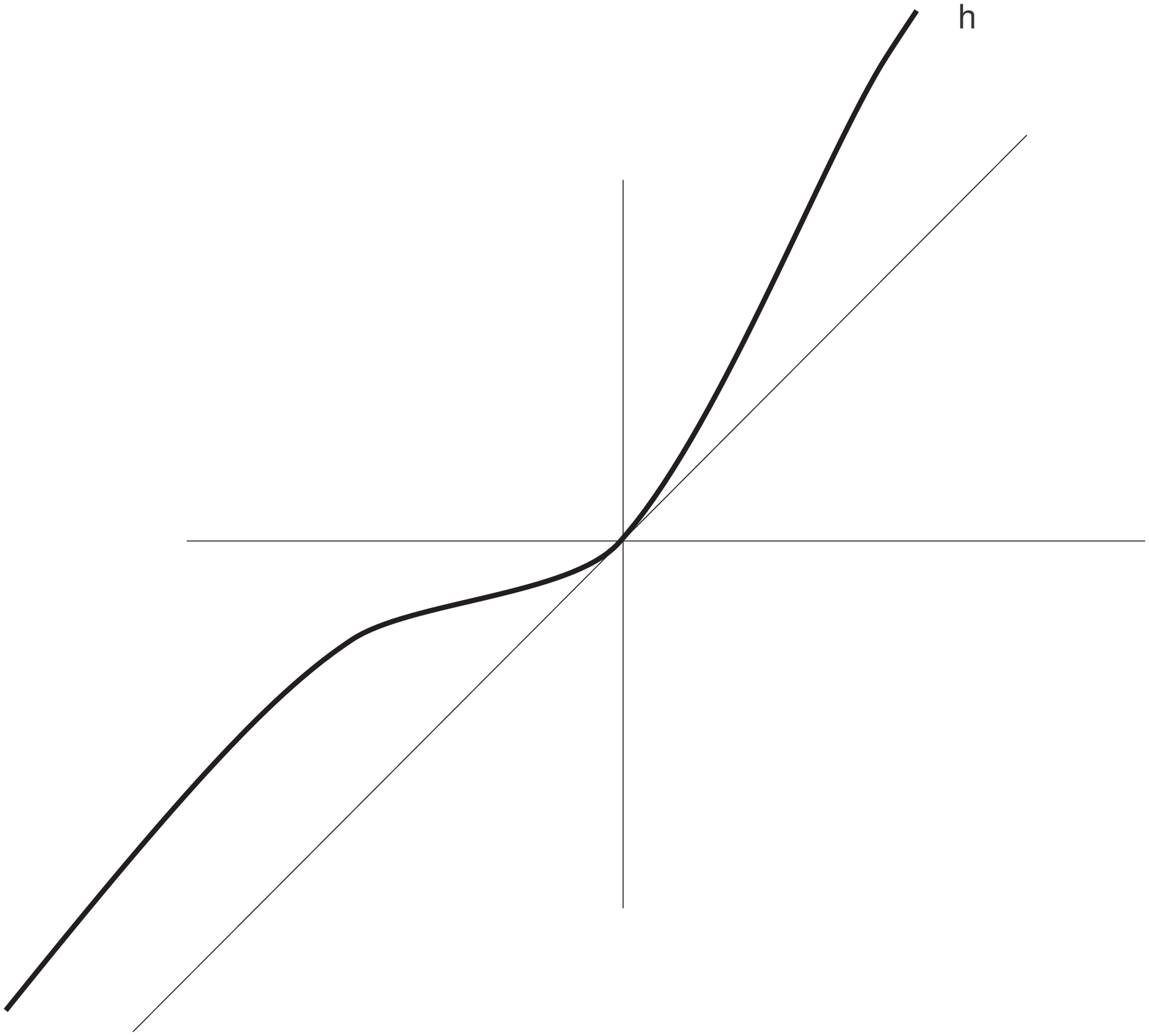}}
\end{center}
\caption{}
\label{pepito}
\end{figure}

 Define  $\phi  (x,y)=\varphi
(y)+\psi (x)$ and $F:\R^{2}\to\R^{2}$ such that $$F(x,y)=(f(x), \phi (x,y)).$$ Note that $K_2=K_1\times \{0\}$ is compact and $F$-invariant, so
$\Omega (F)\neq\emptyset$. However, $\per (f) = \emptyset$ as the lines $\{(x,y): x \notin K_1\}$ have wandering dynamics, and no line $\{(x,y): x \in K_1\}$ is periodic.

\subsection{Another example without periodic points}

There are essentially two examples of covering maps of the annulus without periodic points:
The first one, given in the introduction, is conjugate to $p_d(z)=z^d$ acting in the punctured unit disc.
The second one was given in example 5.4, in this case the map has nonempty nonwandering set.
As in example 5.3, examples of covering maps with any finite number of periodic points can be constructed,
the question is if there exist examples of covering maps which are not complete but satisfy the growth rate inequality for periodic points.
Note also that if both ends are attracting or both repelling, then the map is complete. In all the examples
of non-complete maps it holds that one end is attracting and the other repelling, but this is not necessary.
For example,
let $f:(z,x)\in S^1\times (0,1)\to (z^3, \varphi_z(x))$, where $\varphi_z$ is an increasing homeomorphism of
the interval $(0,1)$ for each $z$, such that $\varphi_1^n(x)\to 1$ and $\varphi^n_{-1}(x)\to 0$ for every $x$.
This implies that the ends are neither attracting nor repelling. It will be shown here that the a map like $f$ can be constructed without any periodic point. This example was communicated to us by the referee.\\
Let $A=S^1\times \R$ and assume that the map is given by $f(z,x)=(z^3,x+t_z)$ where $t_z$ varies continuously with $z$ and the set of numbers $\{t_p\}$ with $p$ periodic of $m_3$ is rationally independent. This means that $f$ has no periodic points, because if $\{p_1,\ldots,p_n\}$ is a periodic orbit of $z^3$, then $f^n(p_1,x)=(p_1,x+\sum_i t_{p_i})$, but by assumption $\sum_i t_{p_i}\neq 0$. To construct the function $t_z$ choose a rationally independent sequence of positive numbers $\{a_i:i\geq 0\}$ and enumerate the periodic points of $m_3$ as $\{p_n:n\geq 0\}$, being $p_0=1$, $p_1=-1$. Define by induction a sequence of functions $z\to t^n_z$ begining with $t^0_z$ as any continuous function such that
$t^0_{p_0}=a_0$ and $t^0_{p_1}=0$. Given $n>0$ define $t^{n}_z$ as follows: $t^{n}_z=t^{n-1}_{z}$ outside a neighborhood of $p_n$ not containing any $p_i$
for $i<n$, $0\leq t^{n-1}_z-t^n_z<2^{-n}$ and $t^n_{p_n}\in a_n\Q$. The sequence of functions $z\to t^n_z$ converges uniformily to a function $z\to t_z$ satisfying the required properties.

\section{Applications}\label{app}

We devote this section to applications of Theorem \ref{per} and Lemma \ref{fijo} to dynamics. Throughout this section, $f: A \to A$ is a degree $d$, $|d|>1$,
covering.

By \textit{attracting set} we mean a proper open subset $U$ such that $\overline{f(U)}\subset U$.

A subset $X\subset A$ is \textit{totally invariant} if $f^{-1}(X) = X$.

\begin{prop}\label{jp} Any of the following hypothesis imply that $f$ is complete.

\begin{enumerate}

\item \label{1} There is an essential attracting set.

\item \label{2} Each end of $A$ is attracting.

\item \label{3} $f$ extends to a map of the two-point compactification of $A$
in such a way that it is $C^1$ at the poles.

\item \label{4} $f$ preserves orientation and there exists a compact totally invariant (not necessarily connected) subset.

\item \label{5} $f$ preserves orientation and there exists an invariant continuum $K$ such that $h(K)$ is not reduced to a point ($h$ was defined on Corollary \ref{periodicos}).
\end{enumerate}
\end{prop}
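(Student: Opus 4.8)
The backbone of my plan is the following fact, which is really already contained in Section \ref{proofs}: \emph{if a covering map $g$ of $A$ of degree $e$, $|e|>1$, preserves a compact essential set $L$ (so $g(L)\subset L$, with $L$ not required connected), then $g$ is complete}. Indeed, connectedness is never used in Section \ref{proofs}: Lemmas \ref{sur} and \ref{fijo} only need $L$ essential and compact, while the Kuperberg step in Lemma \ref{reversing} applies verbatim to $\pi^{-1}(L)\cup\{-\infty,+\infty\}$ (resp.\ to the invariant connectors furnished by Corollary \ref{periodicos} in the end-reversing case); applying this to every power $g^n$ and invoking Corollary \ref{final} gives completeness. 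So for items (\ref{1}), (\ref{2}), (\ref{3}) and for item (\ref{4}) when the invariant set is essential, the plan is simply to exhibit such an $L$; items (\ref{5}) and the inessential subcase of (\ref{4}) I would treat by hand.

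I would dispose of (\ref{5}) first. Fix a lift $F$ and let $H=H_F$, with projection $h$. As $K$ is a continuum, $h(K)$ is a connected compact subset of $S^1$, and $f(K)\subset K$ forces $m_d(h(K))=h(K)$. The key remark is that a connected compact $m_d$-invariant subset of $S^1$ with more than one point must be all of $S^1$: a proper invariant arc of length $\ell$ would force $|d|\ell=\ell$ if $\ell<1/|d|$, and would have image all of $S^1$ if $\ell\ge 1/|d|$, both absurd. Hence $h(K)=S^1$, so $H(\pi^{-1}(K))$ — being a $(+1)$-periodic subset of $\R$ (Lemma \ref{semi}) that projects onto $S^1$ — equals $\R$. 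Since $HF^n=d^nH$, the same $H$ serves for every $f^n$, so $0\in H_{F'}(\pi^{-1}(K))$ for every lift $F'$ of every $f^n$; Lemma \ref{fijo} then gives a fixed point of every lift of every $f^n$, and Corollary \ref{final} finishes. The essential case of (\ref{4}) needs no further work: a compact totally invariant essential set is a compact essential set with $f(X)\subset X$, so the engine applies.

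For the remaining reductions I would build $L$ as follows. In (\ref{1}), put $L=\bigcap_{n\ge 0}\overline{f^n(U)}$: from $\overline{f(U)}\subset U$ one gets $\overline{f^{n+1}(U)}\subset\overline{f^n(U)}$, so these sets decrease, each is essential (it contains the essential open set $f^n(U)$), hence so is $L$, and $f(L)\subset L$; compactness I would deduce from properness of $f$ (the iterates cannot all accumulate at an end). In (\ref{2}), given trapping neighbourhoods $N_\pm$ of the two ends ($\overline{f(N_\pm)}\subset N_\pm$), set $\mathcal B_\pm=\bigcup_{n\ge 0}f^{-n}(N_\pm)$, which are open, disjoint and totally invariant; then $L=A\setminus(\mathcal B_+\cup\mathcal B_-)$ is compact (it lies in the compact set $A\setminus(N_+\cup N_-)$), totally invariant, and essential — essential because $\mathcal B_+\cup\mathcal B_-$ contains no essential loop (iterating one, it would eventually lie inside an inessential $N_\pm$), whereas the complement of any inessential disk does contain an essential loop. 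In (\ref{3}) I would first show, by a winding-number argument, that $Df$ vanishes at each pole: if it were invertible (resp.\ of rank one), the image of a tiny circle about the pole would have winding number $\pm1$ (resp.\ $0$) about its image, contradicting that $f$ is locally $|d|$-to-one with $|d|\ge2$. Thus the ends are super-attracting (after passing to $f^2$ if they are interchanged), trapping neighbourhoods exist, and one concludes exactly as in (\ref{2}).

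The inessential case of (\ref{4}) is, I expect, the genuine obstacle. Here $\pi^{-1}(X)$ is a closed $(+1)$-invariant subset of $\tilde A$ whose connected components are all compact (each component of $X$, being inessential, lifts to compact pieces), and every lift $G$ of every $f^n$ preserves $\pi^{-1}(X)$ and permutes these components, with $G(z+1)=G(z)+d^n$; moreover $H_F$ gives $H_F(\pi^{-1}(X))=d^n H_F(\pi^{-1}(X))$, a closed $(+1)$-periodic set with an $m_d$-invariant "profile". The plan is: for each $n$ and each lift $G$ of $f^n$, show $G$ is not fixed-point-free, whence Brouwer's Theorem \ref{brouwer} gives $\fix(G)\ne\emptyset$, and Corollary \ref{final} concludes. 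The hard part will be producing the needed non-wandering point of $G$ — it need not lie in $\pi^{-1}(X)$ (cf.\ Example \ref{e5}, where the fixed point of the relevant lift sits off $X$) — from the structure forced by the totally invariant inessential $X$ together with the rigidity of $H_F$. In summary, the two points that really demand care are this combinatorial/dynamical step in (\ref{4}) and the compactness claim in (\ref{1}); everything else is routine bookkeeping with the results of Sections \ref{niel} and \ref{proofs}.
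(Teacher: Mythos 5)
Your reductions for items (\ref{1}), (\ref{2}) and (\ref{3}) and your argument for item (\ref{5}) essentially coincide with the paper's, but there is a genuine gap in item (\ref{4}): you explicitly leave open the case where the totally invariant compact set $X$ is inessential, and that case is the real content of the item (the essential, orientation-preserving case is already subsumed by Theorem \ref{per}; Example \ref{e5}, a totally invariant Cantor set, is precisely the situation the item is meant to cover). The paper closes this case with one idea your outline does not contain: \emph{total invariance is transported through the semiconjugacy $h$}. Concretely, one first shows that $h$ is injective on fibers of $f$: if $F$ is a lift, $\tilde x$ a lift of $x$ and $\tilde y_j=F^{-1}(\tilde x+j)$ for $j=0,\dots,|d|-1$, then $dH(\tilde y_j)=H(F(\tilde y_j))=H(\tilde x)+j$, so $H(\tilde y_j)=H(\tilde x)/d+j/d$ and the $|d|$ points of $f^{-1}(x)$ have $|d|$ distinct $h$-values. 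Since $f^{-1}(X)=X$ and $m_d^{-1}(z)$ has exactly $|d|$ elements, this forces $m_d^{-1}(h(X))=h(X)$; a nonempty compact set totally invariant under $m_d$ contains the dense backward orbit of any of its points, hence $h(X)=S^1$, hence $H_G$ is surjective (in particular $H_G^{-1}(0)\neq\emptyset$) for every lift $G$ of every $f^n$, and Lemma \ref{fijo} together with Corollary \ref{final} finishes. No non-wandering point for $G$ off $\pi^{-1}(X)$ ever has to be manufactured, and no essential/inessential dichotomy is needed.

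A secondary caveat on your ``backbone'': the claim that the Kuperberg step of Lemma \ref{reversing} applies verbatim to a non-connected compact essential $L$ is not correct as stated --- if $L$ has an inessential component, that component lifts to compact pieces missing $\pm\infty$, so $\pi^{-1}(L)\cup\{-\infty,+\infty\}$ is disconnected and Theorem \ref{kryst} does not apply to it. This does not ultimately hurt you, because items (\ref{4}) and (\ref{5}) assume $f$ preserves orientation (where only Lemmas \ref{sur} and \ref{fijo} are needed), and in items (\ref{1})--(\ref{3}) one can, as the paper does, produce an honest invariant \emph{continuum} (the nested intersection $\bigcap_n \overline{f^n(U)}$, respectively the complement of the two basins) and invoke Theorem \ref{per} directly; but the general-compact-set version of the engine should not be asserted for the orientation-reversing case.
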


\begin{proof}
\ref{1}: Let $U\subset A$ be an essential open set such that $\overline{f(U)}%
\subset U$. Then, $K=\cap_{n\geq 0} f^n (U)$ is an invariant continuum.
Besides, it is essential, because $f^n (U)$ is essential for each $n$ as $|d|>1$. The result now follows from Theorem \ref{per}.

\ref{2}: If both ends are attracting, then the complement of both basins of attraction  is an  essential invariant continuum, and we
may apply Theorem \ref{per}.

\ref{3}: Note that in this case both ends must be attracting, as the
derivative in the compactification must be $0$ at the poles.  Indeed, note that $f$ is a $d:1$  branched covering in a neighbourhood of the
pole. So the winding number $I_{f\gamma}(p)=d$ whenever $\gamma$ is a small circle with center $p$. On the other hand, if
a map $g$ has a fixed point at $p$ isolate in $g^{-1}(p)$ and nonvanishing differential at $p$, then $|I_{g\gamma}(p)|\leq 1$ for a small curve $\gamma$ such that $I_\gamma(p)=1$.

\ref{4}: Let $X$ be a compact set such that $f^{-1}(X) = X$. It is enough to show that $m_d^{-1}(h(X))= h(X)$. Indeed, if $m_d^{-1}(h(X))= h(X)$, then  $h(X)$ is dense in $S^1$.  As $h(X)$ is also compact, $h(X) = S^1$.  So, $H_F$ (from Lemma \ref{semi}) is surjective for any lift $F$ of
$f$.  Using the same argument, $H_G$ is surjective for any lift $G$ of $f^n$, and we are done by Lemma \ref{fijo} and Corollary \ref{final}.

To prove that $m_d^{-1}(h(X))= h(X)$ we first claim that if $x\neq y$ and $f(x)= f(y)$, then $h(x)\neq h(y)$. Let $F$ a lift of $f$ and $\tilde x$ a lift of $x$.  Define $\tilde y_j = F^{-1}(\tilde x +j), j=0, \ldots, |d|-1$.  Then,
$\pi(\cup_{j=0}^{|d|-1} \tilde y_j )= f^{-1}(x)$.  Moreover, $dH(\tilde y_j)= HF(\tilde y_j)= H(\tilde x+j)= H (\tilde x)+j$, and so $H(\tilde y_j)= \frac{H(\tilde x)}{d}+\frac{j}{d}$. This proves the claim, because $\pi H(\tilde y_j) = h\pi (\tilde y_j)$ and $\{\pi H(\tilde y_j): 0\leq j\leq |d|-1\}$ has $|d|$ different elements because of the computation we just did.

It is obvious that $h(X)\subset m_d^{-1}(h(X))$ by the semiconjugacy equation.  To prove that $m_d^{-1}(h(X)) \subset h(X)$, let $z\in h(X)$ and we will prove that
$m_d^{-1}(z) \subset h(X)$.  As $z\in h(X), z = h(x), x \in X$.  Let $f^{-1}(x) = \{x_1,\ldots , x_d\}$.  By hypothesis $x_i\in X$ for all $i= 1 , \ldots, d$. Now,
$$m_dh(x_i)= hf(x_i)= h(x) = z.$$   \noindent So, $h(x_1), \ldots, h (x_d) \in m_d^{-1}(z)$.  As $m_d^{-1}(z)$ has exactly $d$ elements, and $h(x_1), \ldots, h (x_d)$
are exactly $d$ elements by the claim, we have $m_d^{-1} (z) = \{h(x_1), \ldots, h (x_d)\}$.

\ref{5}:  Note that $h(K)$ is an invariant interval that is not reduced to a point, and so $h(K) = S^1$, which implies that $H_F$ is surjective for any lift $F$ of $f$, and we conclude as in the
previous item.
\end{proof}

The following application shows how the existence of a periodic orbit can imply existence of infinitely many of them.  The proof is immediate from item (3) in the previous proposition.

\begin{clly}  Let $f: S^2 \to S^2$ be a $C^1$ degree $d$ map, $|d|>1$, and ${p,q}$ a two-periodic totally invariant orbit ($f^{-1}(\{p,q\}) = \{p,q\}, f(p)=q, f(q)=p$). If
$f:S^2\backslash \{p,q\}\to S^2\backslash \{p,q\}$ is a covering, then $f$ has periodic points of arbitrarily large period.

\end{clly}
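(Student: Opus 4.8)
The plan is to realize the punctured sphere as an open annulus and then quote Proposition~\ref{jp}(3). Write $A=S^2\setminus\{p,q\}$, homeomorphic to the open annulus, with two-point compactification $A^*=S^2$. Since $\{p,q\}$ is totally invariant and $f(p)=q,\ f(q)=p$, one checks that $f^{-1}(p)=\{q\}$ and $f^{-1}(q)=\{p\}$, so in particular $f(A)\subseteq A$ and $f|_A\colon A\to A$ is a well-defined covering map (this last being part of the hypothesis). The first step is to see that the degree $d'$ of the annulus covering $f|_A$ satisfies $|d'|>1$: since $|d|\ge 2$ the map $f$ is not injective, and every non-injective pair of $f$ must lie in $A$ (because $f^{-1}(p)=\{q\}$ and $f^{-1}(q)=\{p\}$), hence $f|_A$ is not injective and therefore a covering of the annulus with at least two sheets; thus $|d'|\ge 2$. (In fact $|d'|=|d|$, since $p$ is the only preimage of $q$, so the whole local degree of $f$ at the punctures sits at $p$.)

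The second step is to check the hypothesis of Proposition~\ref{jp}(3). The given map $f\colon S^2\to S^2$ is a continuous extension of $f|_A$ to $A^*=S^2$, and it is $C^1$ at the two poles $p,q$ because it is $C^1$ on all of $S^2$. Proposition~\ref{jp}(3) is stated for an arbitrary covering of the annulus, so the fact that this extension interchanges the two ends rather than fixing them is irrelevant; alternatively one may apply item (3) to $f^2$, whose restriction to $A$ is a covering of degree $(d')^2$ (of absolute value $>1$) fixing both ends, still $C^1$ at the poles, and whose periodic points are periodic points of $f$. Either way, $f|_A$ is \emph{complete}.

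The third step converts completeness into the conclusion. By definition $N_k(f|_A)=N_k(m_{d'})$ for every $k\ge 1$. Since $|d'|\ge 2$, the circle map $m_{d'}$ has periodic points of period exactly $k$ for all sufficiently large $k$: the number $N_1(m_{d'}^k)=|(d')^k-1|$ grows exponentially in $k$, while the contribution $\sum_{k'\mid k,\ k'<k}N_{k'}(m_{d'})\le\sum_{k'\le k/2}|(d')^{k'}-1|$ of the proper divisors is of strictly smaller exponential order, forcing $N_k(m_{d'})>0$ for large $k$. Hence $N_k(f|_A)>0$ for all large $k$, and each such Nielsen class contains (by definition of the period of a class) a periodic point of $f|_A$ of period exactly $k$, which is a periodic point of $f$ of the same period. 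So $f$ has periodic points of arbitrarily large period.

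I expect the only delicate point to be the bookkeeping at the punctures: making sure that $f|_A$ is an honest unbranched covering of the annulus --- guaranteed here by the hypothesis together with total invariance of $\{p,q\}$, which keeps all preimages away from the punctures --- and that its degree exceeds $1$ in absolute value, so that Proposition~\ref{jp}(3) applies and $m_{d'}$ truly has unbounded periods. Everything else is a direct appeal to Proposition~\ref{jp}(3) and the elementary periodic-orbit count for $z\mapsto z^{d'}$.
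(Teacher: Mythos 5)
Your argument is correct and takes essentially the same route as the paper, whose entire proof is the remark that the corollary is immediate from Proposition \ref{jp}, item (3). The details you supply --- that total invariance forces $f^{-1}(p)=\{q\}$ and $f^{-1}(q)=\{p\}$ so the restriction to $S^2\setminus\{p,q\}$ is an unbranched covering of annulus degree of absolute value greater than $1$, the passage to $f^2$ to handle the swapped ends, and the divisor count showing $N_k(m_{d'})>0$ for all large $k$ --- are exactly the points the paper leaves implicit, and you handle them correctly.
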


We will make some calculations that will be used in the following lemma.

Fix a lift $F= F_0$ of $f$, and for any $k\in {\mathbb{Z}}$, define the maps $F_k
(x) = F(x) + k$. Then, for any $m\in {\mathbb{N}}$ and $x\in \tilde A$,

\begin{equation}  \label{eq1}
F_k^m (x) = F^m (x) + \sum _{i=0}^{m-1} k d ^i = F ^m (x) + \frac{k(1-d^m)}{%
1-d}.
\end{equation}
The computation is straightforward, following from the fact that for any $%
k\in {\mathbb{Z}}$ and $x\in \tilde A$, $F(x+k) = F(x)+ dk$.

\begin{lemma}\label{rotk} Suppose that there exists a compact set $K\subset A$ such that $f(K)\subset K$.
If there exists $x\in K$, and a lift $F$ of $f$ such that $\lim _{m\to
\infty} \frac{(F^m (\tilde x))_1}{d^m} = \frac{k}{d^n-1}$, $k\in {\mathbb{Z}}%
, n\geq1$, for a lift $\tilde x$ of $x$, then there exists $z\in \tilde A$
such that $F^n (z) = z+k$. In particular, $\per(f)\neq \emptyset$.
\end{lemma}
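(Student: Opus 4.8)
\textbf{Plan for the proof of Lemma \ref{rotk}.} The idea is to reduce the statement to Lemma \ref{fijo} applied to an appropriate lift of $f^n$. First I would set $g = f^n$, which is a covering map of degree $d^n$ with $|d^n| > 1$, and which still satisfies $g(K) \subset K$. The lift $G = F^n$ of $g$ has, by Lemma \ref{semi} applied to $g$, an associated semiconjugacy function $H_G : \tilde K \to \R$ satisfying $H_G(x) = \lim_{m\to\infty} (G^m(x))_1 / (d^n)^m$. Iterating the hypothesis, for the given lift $\tilde x$ of $x$ we have $\lim_{m\to\infty}(F^{nm}(\tilde x))_1/d^{nm} = \lim_{m\to\infty}(F^m(\tilde x))_1/d^m = k/(d^n-1)$, so $H_G(\tilde x) = k/(d^n - 1)$. (Here I use that the limit defining $H_F$ exists, hence equals the limit of any subsequence $d^{nm}$.)

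Next I would replace $G = F^n$ by a translated lift. Using equation (\ref{eq1}) with $m$ replaced by $n$ and the original base lift $F$: if $G_j(x) = G(x) + j$ denotes the lifts of $g$, then one checks $H_{G_j} = H_G - \frac{j}{d^n - 1}$, because adding the constant $j$ to $G$ shifts the normalized limit by $j \sum_{i\geq 0} d^{-ni}\cdot(\text{sign adjustments})$ — more carefully, $H_{G_j}(x) = \lim_m (G_j^m(x))_1/d^{nm}$ and $G_j^m(x) = G^m(x) + j\frac{1-d^{nm}}{1-d^n}$ by the analogue of (\ref{eq1}), so dividing by $d^{nm}$ and letting $m\to\infty$ gives $H_{G_j}(x) = H_G(x) + \frac{j}{d^n - 1}$. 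Wait — sign: $\frac{1-d^{nm}}{1-d^n} \sim \frac{-d^{nm}}{1-d^n} = \frac{d^{nm}}{d^n-1}$, so indeed $H_{G_j} = H_G + \frac{j}{d^n-1}$. Choosing $j = -k$ gives a lift $G' = G_{-k} = F^n - k$ of $g$ with $H_{G'}(\tilde x) = H_G(\tilde x) - \frac{k}{d^n-1} = 0$, so $H_{G'}^{-1}(0) \neq \emptyset$.

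Then I would invoke Lemma \ref{fijo}, but there is a subtlety: that lemma is stated for orientation-preserving $f$. If $f$ preserves orientation, then so does $g = f^n$, and Lemma \ref{fijo} applied to $g$ gives a point $z \in \tilde A$ with $G'(z) = z$, i.e. $F^n(z) - k = z$, i.e. $F^n(z) = z + k$, as desired. If $f$ reverses orientation, then $f^2$ preserves it, so I would instead run the argument with $g = f^{2n}$ (degree $d^{2n}$), lift $G = F^{2n}$, and the same computation: the hypothesis iterated gives $H_{F^{2n}}(\tilde x) = k'/(d^{2n}-1)$ where $k' = k(d^n+1)$ since $\frac{k}{d^n-1} = \frac{k(d^n+1)}{d^{2n}-1}$; then the translated lift $F^{2n} - k'$ has a zero of its $H$ function, Lemma \ref{fijo} gives $z$ with $F^{2n}(z) = z + k'$. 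This does not immediately give $F^n(z) = z+k$, so for the orientation-reversing case I would instead use Lemma \ref{reversing}-style reasoning, or better: observe that $H_{F^n}^{-1}(0)$ (for the correctly translated lift $F^n - k$) is still a compact $(F^n-k)$-invariant subset of the plane by the same argument as in Lemma \ref{fijo} (boundedness of $H(x,y) - x$ on $\tilde K$ and invariance from $H F^n = d^n H$), and $F^n - k$ is a homeomorphism of the plane; if it is orientation-reversing apply Kuperberg's Theorem \ref{kryst} (with $X$ a suitable continuum, or extend via Lemma \ref{extend}) to get a fixed point, if orientation-preserving apply Brouwer. Either way one obtains $z$ with $(F^n - k)(z) = z$.

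Finally, for the ``in particular'' clause: the point $z$ satisfies $F^n(z) = z + k$, so $\pi(z) \in A$ has $f^n(\pi(z)) = \pi(F^n(z)) = \pi(z+k) = \pi(z)$, hence $\pi(z)$ is a periodic point of $f$ of period dividing $n$, so $\per(f) \neq \emptyset$. \textbf{The main obstacle} I anticipate is the orientation-reversing case: Lemma \ref{fijo} as stated requires orientation preservation, so one must either carefully restate the compactness/invariance argument of Lemma \ref{fijo} in a form that feeds into Kuperberg's theorem (Theorem \ref{kryst}) — noting that $H_{F^n-k}^{-1}(0)$ is compact and invariant but may be disconnected, so one should pass to its ``filled'' hull or use the two-point compactification trick from Lemma \ref{reversing} — or else argue that it suffices to treat $f$ orientation-preserving because in the reversing case one can replace $n$ by $2n$ and $k$ by $k(d^n+1)$ and then show the resulting fixed point of $F^{2n}$ already satisfies the degree-$n$ equation, which requires an extra argument (e.g. uniqueness within a Nielsen class, or directly that $F^n(z)-z \in \Z$ forces the value $k$). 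The cleanest route is probably to prove a lemma: ``if some translated lift of $f^n$ has $H^{-1}(0)\neq\emptyset$ then that lift has a fixed point,'' valid regardless of orientation, by combining the compactness argument of Lemma \ref{fijo} with Brouwer in the orientation-preserving subcase and Theorem \ref{kryst} in the reversing subcase.
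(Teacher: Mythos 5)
Your proposal is correct and follows essentially the same route as the paper: both pass to the translated lift $G=F^{n}-k$ of $f^{n}$, use equation (\ref{eq1}) to compute that $\lim_{m}(G^{m}(\tilde x))_1/d^{nm}=0$, and conclude via Remark \ref{rot0} (i.e.\ Lemma \ref{fijo}). Your worry about the orientation-reversing case is legitimate but is not something the paper resolves either -- its proof simply cites Remark \ref{rot0}, which inherits the orientation-preserving hypothesis of Lemma \ref{fijo} -- so your flagged obstacle points at a gap in the paper's own exposition rather than a defect of your argument.
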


\begin{proof}
We have to show that the map $F^n -k$ has a fixed point. By Remark \ref{rot0}%
, it is enough to show that $\lim _{m\to \infty} \frac{(G^m (\tilde x))_1}{%
(d^n)^m} = 0$, where $G= F^n -k$ (see Remark \ref{rot0} and note that $G$ is
a lift of $f^n$).

\begin{equation*}
\lim _{m\to \infty} \frac{(G^m (\tilde x))_1}{(d^n)^m} = \lim _{m\to \infty}
\frac{((F^n -k)^m (\tilde x))_1}{d^{nm}}  \overset{%
\mbox{\scriptsize
\begin{tabular}{c}
(\ref{eq1})
\end{tabular}}}{= }\lim _{m\to \infty} \frac{(F^{nm} (\tilde x))_1- \sum
_{i=0}^{m-1} k d ^{ni}}{d^{nm}}=
\end{equation*}
\begin{equation*}
\frac{k}{d^n-1}- \lim _{m\to \infty} \frac{k}{d^{nm}} \sum _{i=0}^{m-1} d
^{ni}.
\end{equation*}%
\noindent Now,
\begin{equation*}
\lim _{m\to \infty} \frac{k}{d^{nm}} \sum _{i=0}^{m-1} d ^{ni} = \lim _{m\to
\infty} \frac{k}{d^{nm}} \frac{1-d^{nm}}{1-d^n} = \frac{k}{d^n -1}.
\end{equation*}

\end{proof}

\begin{prop}\label{appper} Any of the following hypothesis imply that $f$ has periodic points.
\begin{enumerate}

\item \label{i1} There exists $x\in A$ with bounded forward orbit such that $\lim_{m\to
\infty} \frac{(F^m (\tilde x))_1}{d^m}= \frac{k}{d^n-1}$, for some lift $\tilde x
$ of $x$, $k\in {\mathbb{Z}}, n\geq 1$.

\item \label{i2} There exists an invariant continuum.
\end{enumerate}
\end{prop}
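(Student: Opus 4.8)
The plan is to obtain item~(\ref{i1}) as a direct application of Lemma~\ref{rotk}, and item~(\ref{i2}) by a short case analysis built on the fixed point lemmas already established; all the analytic substance is already packaged in those earlier results, so the only real bookkeeping concerns orientation.

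\emph{Item (\ref{i1}).} Given $x\in A$ with bounded forward orbit, put $K=\overline{\{f^m(x):m\ge 0\}}$. Then $K$ is a compact subset of $A$, and by continuity $f(K)\subset\overline{\{f^m(x):m\ge 1\}}\subset K$; moreover $x\in K$. The hypothesis on $\lim_m (F^m(\tilde x))_1/d^m=k/(d^n-1)$ is then exactly the hypothesis of Lemma~\ref{rotk}, which yields $\per(f)\neq\emptyset$. Since Lemma~\ref{rotk} passes through Remark~\ref{rot0} and hence Lemma~\ref{fijo}, where orientation preservation is used, we first reduce to that case when $f$ reverses orientation: $f^2$ is an orientation preserving covering of degree $d^2$, the forward orbit of $x$ under $f^2$ is contained in its forward orbit under $f$ and hence bounded, and, using $\tfrac{k}{d^n-1}=\tfrac{k(d^n+1)}{d^{2n}-1}$, we get $\lim_m (F^{2m}(\tilde x))_1/(d^2)^m=\tfrac{k(d^n+1)}{(d^2)^n-1}$; Lemma~\ref{rotk} applied to $f^2$ gives $\per(f^2)\subset\per(f)$ nonempty.

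\emph{Item (\ref{i2}).} Let $K$ be a continuum with $f(K)\subset K$. If $K$ is essential, then Lemma~\ref{preserving} (when $f$ preserves orientation) or Lemma~\ref{reversing} (when $f$ reverses orientation) shows that every lift of $f$ has a fixed point, so $\fix(f)\neq\emptyset$. If $K$ is inessential, then $K$ is contained in a disk of $A$, so $\pi^{-1}(K)$ is a disjoint union of integer translates $K_0+j$ of a single compact continuum $K_0\subset\tilde A$, and these translates are the connected components of $\pi^{-1}(K)$, each open and closed in it. Given any lift $F$ of $f$, the set $F(K_0)$ is connected and contained in $\pi^{-1}(K)$, hence lies in one translate $K_0+j_0$, so $F':=F-j_0$ is a lift of $f$ with $F'(K_0)\subset K_0$. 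Now $F'$ is a homeomorphism of $\tilde A\cong\R^2$ with a compact invariant continuum; if $f$ preserves orientation, Brouwer's Theorem~\ref{brouwer} (this is Lemma~\ref{sintasa}) gives $\fix(F')\neq\emptyset$, and if $f$ reverses orientation, so does $F'$, and Kuperberg's Theorem~\ref{kryst} gives a fixed point of $F'$ in $K_0$. In either case $\fix(f)\neq\emptyset$.

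\emph{Where the work sits.} There is no serious obstacle: the content is entirely in Lemma~\ref{rotk} and in the existence results \ref{preserving}, \ref{reversing}, \ref{sintasa}, together with Theorems~\ref{brouwer} and \ref{kryst}. The points to watch are the orientation bookkeeping (passing to $f^2$ and tracking the identity $\tfrac{k}{d^n-1}=\tfrac{k(d^n+1)}{d^{2n}-1}$) and, in the inessential case of~(\ref{i2}), identifying the correct lift $F'$ under which the compact continuum $K_0$ is invariant. A uniform alternative for~(\ref{i2}), avoiding the essential/inessential split, is to use the semiconjugacy: by Corollary~\ref{periodicos}, $h(K)$ is a closed, connected, forward $m_d$-invariant subset of $S^1$, and since $m_d$ is expanding this forces $h(K)=S^1$ or $h(K)$ equal to a single fixed point of $m_d$; in the first case $H_F$ is onto $\R$ for every lift $F$, in the second case some lift $F'$ has $H_{F'}(\tilde K)\subset\Z$ and hence $0\in H_{F'}(\tilde K)$; either way $H_F^{-1}(0)\neq\emptyset$ for a suitable lift, and Lemma~\ref{fijo} finishes, after replacing $f$ by $f^2$ if $f$ reverses orientation.
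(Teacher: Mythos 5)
Your proof is correct and follows essentially the same route as the paper's: item (\ref{i1}) is the closure-of-the-forward-orbit reduction to Lemma \ref{rotk}, and item (\ref{i2}) splits into the essential case (Theorem \ref{per}, i.e.\ Lemmas \ref{preserving} and \ref{reversing}) and the inessential case (Lemma \ref{sintasa}, resp.\ Kuperberg's Theorem \ref{kryst}). The only divergence is that you additionally pass to $f^2$ in item (\ref{i1}) to handle orientation-reversing $f$ --- a legitimate and in fact welcome patch, since Lemma \ref{rotk} rests on Remark \ref{rot0} and Lemma \ref{fijo}, which require orientation preservation, a point the paper's own one-line application of Lemma \ref{rotk} glosses over.
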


\begin{proof}\ref{i1}:  Let $K$ be the closure of the forward orbit of $x$.  Then, $K$ is compact, $f(K)\subset K$, and $\lim_{m\to
\infty} \frac{(F^m (\tilde x))_1}{d^m}= \frac{k}{d^n-1}$, for a lift $\tilde x
$ of $x$, $k\in {\mathbb{Z}}, n\geq 1$.  Then, by Lemma \ref{rotk} $\per(f)\neq \emptyset$.

\ref{i2}:  If the continuum happens to be essential, then $f$ is complete by Theorem \ref{per}. Otherwise, if $f$ preserves orientation, this follows from Lemma \ref{sintasa}.  If $f$ reverses
orientation, one applies Kuperberg's Theorem \cite{krys}.
\end{proof}

We finish this paper with an open question:  if the covering assumption is dropped and we remain with a degree $d$ map of the annulus ($|d|>1$). If $K$
is an invariant essential continuum, is $f$ necessarily complete?


\begin{thebibliography}{Poinc}


\bibitem[Be]{bell}{\ H. Bell.}{\ A fixed point theorem for planar homeomorphisms.}{Bull.Amer.Math.Soc.}{\ 82 (1976), 778-780.}

\bibitem[Bi]{birk}{\ G. D. Birkhoff.} {\ Proof of Poincar\'es last geometric theorem.}{\ Trans. Amer. Math. Soc.} 14 (1913), 14-22.


\bibitem[Bo1]{bolonqui1}{\ Boronski J.P.}{\ A fixed point theorem for the pseudo-circle.} {\ Topol. Appl.} 158 (2011) 775-778.


\bibitem[Bo2]{bolonqui2}{\ Boronski J.P.} {\ A note on minimal sets of periods for cofrontier maps.} {\ J. Differ. Equ. Appl.} 19, No. 7, 1213-1217 (2013).

\bibitem[Brou]{brou} {\ L.E.J. Brouwer.}{\ Beweis des ebenen
Translationssatzes.}{\ Math. Ann.}{\ 72,}{37}-{54}, {1912}.


\bibitem[Brow]{brown}{\ M. Brown}{\ A short proof of the Cartwright-Littlewood fixed point Theorem.}{\ Proc. Amer. Math. Soc. 65 (1977), 372.}

\bibitem[CL]{cl}{\ M. Cartwright, J. Littlewood.}{\ Some fixed points theorems}{Ann. of Math. 54 (1951), 1-37}

\bibitem[DR]{DR} {\ F. Dalbono, C. Rebelo.} {\ Poincare-Birkhoff fixed point
theorem and periodic solutions of asymptotically linear planar Hamiltonian
systems.} {Rend. Sem. Math. Univ. Pol. Torino}, 60 (2002), 233-263 (2003).

\bibitem[Fr2]{gpb} {\ J. Franks.}{\ Generalizations of the Poincare-Birkhoff
Theorem.}{\ Annals of Mathematics}{\ 128,}{139}-{151}, {1988}.

\bibitem[Fr3]{f2} {\ J Franks.} {\ Area preserving homeomorphisms of open
surfaces of genus zero.}{\ New York J. Math.} 2 (1996) 1--19

\bibitem[Fr4]{f3} {\ J Franks.} {\ Rotation vectors and fixed points of area
preserving surface diffeomorphisms.}{\ Trans. Amer. Math. Soc.} 348 (1996)
2637--2662

\bibitem[FH]{fh} {\ J. Franks and M. Handel.}{\ Periodic points of Hamiltonian surface diffeomorphisms.}
{\ Geom. Topol.} 7 (2003), 713--756.

\bibitem[IPRX]{iprx} {\ J. Iglesias, A. Portela, A. Rovella. and J. Xavier.}{%
\ Dynamics of covering maps of the annulus I: Semiconjugacies}{\ http://arxiv.org/pdf/1402.2317.pdf.}

\bibitem[J]{bol} {\ B. Jiang.} {\ A primer of Nielsen Fixed Point Theory} Handbook of Topologiocal Fixed Point Theory, Springer 2005.

\bibitem[K]{krys}{\ K. Kuperberg.}{\ Fixed points of orientation reversing homeomorphisms of the plane.}{\ Proc. Am. Math.Soc. , 112, (1991), 223-229}

\bibitem[LC]{patrice} {\ P. LeCalvez.}{\ Une version feuillet\'ee \'equivariante du th\'eor\`eme de translation de Brouwer.}{\ Publications Math\' ematiques de l'Institut des Hautes \`Etudes Scientifiques} 2005, Volume 102, Issue 1, pp 1-98

\bibitem[LC2]{lc} {\ P. LeCalvez.} {\ Periodic orbits of Hamiltonian
homeomorphisms of surfaces.} {\ Duke Math. J.} 133 (2006), 1-204

\bibitem[P]{poin1} {H. Poincar\'e.}{\ Sur un th\'eor\`eme de
g\'eom\'etrie.}{\ Rend. Circ. Mat. Palermo} 33 (1912), 375-407.

\bibitem[PS]{shub} {C.Pugh, M. Shub.} {\ Periodic points on the 2-sphere.}{%
\ Disc. and Cont. Dyn. Sys} 34 (2014), 1171--1182.

\bibitem[S]{shub2} {M.Shub.} {\ All, most, some differentiable dynamical
systems.} {Proceedings of the International Congress of Mathematicians,
Madrid, Spain, European Math. Society.} (2006), 99--120.
\end{thebibliography}
\end{document}